\documentclass[12pt]{article}

\usepackage{amsthm}
\usepackage{amsmath}
\usepackage{amssymb}
\usepackage{enumitem}
\usepackage{yfonts}
\usepackage{mathrsfs}
\usepackage{graphicx}
\usepackage{color}


\theoremstyle{definition}
\newtheorem{theorem}{Theorem}[section]
\newtheorem{lemma}[theorem]{Lemma}
\newtheorem{corollary}[theorem]{Corollary}

\newtheorem{definition}[theorem]{Definition}

\newtheorem{remark}[theorem]{Remark}


\newcommand{\mb}[1]{\mbox{\boldmath$#1$}}
\newcommand{\ds}{\,\,}

\setlength\marginparwidth{3cm}
\newcommand{\bX}{\mbox{\boldmath$\omega$}}
\newcommand{\bH}{\mbox{\boldmath$H$}}
\newcommand{\bK}{\mbox{\boldmath$K$}}
\newcommand{\bA}{\mbox{\boldmath$A$}}
\newcommand{\bB}{\mbox{\boldmath$B$}}
\newcommand{\bC}{\mbox{\boldmath$C$}}
\newcommand{\bD}{\mbox{\boldmath$D$}}
\newcommand{\bV}{\mbox{\boldmath$V$}}
\newcommand{\bI}{\mbox{\boldmath$I$}}
\newcommand{\bomega}{\mbox{\boldmath $\omega$}}


\title{Discrete projective minimal surfaces}

\author{A.\ McCarthy\\ \small
            School of Arts and Sciences\\ \small
            The University of Notre Dame\\ \small
            PO Box 944, Broadway, NSW 2007\\ \small 
            Australia
            \and
           W.K.\ Schief\\ \small
           School of Mathematics and Statistics\\ \small
           The University of New South Wales\\ \small 
           Sydney, NSW 2052\\ \small
          Australia}

\date{15th June 2017}

\begin{document}

\maketitle

\begin{abstract}
We propose a natural discretisation scheme for classical projective minimal surfaces. We follow the classical geometric characterisation and classification of projective minimal surfaces and introduce at each step canonical discrete models of the associated geometric notions and objects. Thus, we introduce discrete analogues of classical Lie quadrics and their envelopes and classify discrete projective minimal surfaces according to the cardinality of the class of envelopes. This leads to discrete versions of Godeaux-Rozet, Demoulin and Tzitz\'eica surfaces. The latter class of surfaces requires the introduction of certain discrete line congruences which may also be employed in the classification of discrete projective minimal surfaces. The classification scheme is based on the notion of discrete surfaces which are in asymptotic correspondence. In this context, we set down a discrete analogue of a classical theorem which states that an envelope (of the Lie quadrics) of a surface is in asymptotic correspondence with the surface if and only if the surface is either projective minimal or a Q surface. Accordingly, we present a geometric definition of discrete Q surfaces and their relatives, namely discrete counterparts of classical semi-Q, complex, doubly Q and doubly complex surfaces.
\end{abstract}

\section{Introduction}

Projective geometry is arguably ``The Queen of All Geometries''. In 1872, this was recognised by Felix Klein whose pioneering and universally accepted Erlangen program revolutionised the way (differential) geometry was approached and treated.  Thus, Klein \cite{Klein1872} proposed that geometries should be classified in terms of groups of transformations acting on a space (of homogeneous coordinates) with projective geometry being associated with the most encompassing group, namely the group of projective collineations. Apart from placing Euclidean and affine geometry in this context, this
approach gives rise to projective models of a diversity of geometries such as  Lie, M\"obius, Laguerre and  Pl\"ucker line geometries.

Projective differential geometry was initiated by Wilczynski \cite{Wilczynski1907,Wilczynski1908,Wilczynski1909} representing the ``American School'' and later formulated in an invariant manner by the ``Italian School'' whose founding members were Fubini and \^{C}ech. \mbox{Cartan} is but one of a long list of distinguished geometers who were involved in the development of this subject with monographs by Fubini \& \^{C}ech \cite{FubiniCech1926,FubiniCech1927}, Lane \cite{Lane1942}, Finikov \cite{Finikov1937} and, most notably, Bol whose first two volumes \cite{Bol1950,Bol1954} consist of more than 700 pages. Recently, there has been a resurgence of interest in projective differential geometry. Here, we mention the monograph {\sl Projective Differential Geometry. Old and New} by Ovsienko and Tabachnikov \cite{OvsienkoTabachnikov2005} and the {\sl Notes on Projective Differential Geometry} by Eastwood \cite{Eastwood2008}. Interestingly, as pointed out in \cite{PrinceCrampin1984}, projective differential geometry also finds application in General Relativity in connection with geodesic conservation laws.  

Projective geometry has proven to play a central role in both discrete differential geometry and discrete integrable systems theory \cite{bobenko2008}. Thus, it turns out that classical incidence theorems of projective geometry such as Desargues', M\"obius' and Pascal's theorems lie at the heart of the integrable structure prevalent in discrete differential geometry. Their algebraic incarnations provide the origin of the integrability of the associated discrete integrable systems such as the master Hirota (dKP), Miwa (dBKP) and dCKP equations (see \cite{Schief2003,KingSchief2006} and references therein). A survey of this important subject may be found in the monograph \cite{bobenko2008}. Moreover, classical projective differential geometry has been shown to constitute a rich source of integrable geometries and associated nonlinear differential equations \cite{FerapontovSchief1999,Ferapontov2000}.

Here, in view of the development of a canonical discrete analogue of projective differential geometry, we are concerned with the important class of projective minimal surfaces \cite{Bol1954}. These have a great variety of geometric and algebraic properties and are therefore custom made for the identification of a general discretisaton procedure which preserves essential geometric and algebraic features. In this connection, it should be pointed out that projective minimal surfaces have been shown to be integrable (see \cite{FerapontovSchief1999,Schief2002} and references therein) in the sense that the underlying projective ``Gauss-Mainardi-Codazzi equations'' are amenable to the powerful techniques of integrable systems theory (soliton theory) \cite{Schief2002}. It turns out that the discretisation scheme proposed here preserves integrability and even though this important aspect will be discussed in a separate publication, we briefly identify and discuss a discrete analogue of the Euler-Lagrange equations for projective minimal surfaces.

Projective minimal surfaces may be characterised and classified both geometrically and algebraically. Here, we mainly focus on geometric  notions and objects with a view to establishing natural discrete counterparts. Even though, by definition, projective minimal surfaces are critical points of the area functional in projective differential geometry, these may also be characterised in terms of Lie quadrics and their envelopes \cite{Bol1954,sasaki2005}. For any (hyperbolic) surface $\Sigma$ in a three-dimensional projective space $\mathbb{P}^3$, there exists a particular two-parameter family (congruence) of quadrics $Q$ which have second-order contact with $\Sigma$. In general, this congruence of quadrics, which are known as Lie quadrics, admit four additional envelopes $\Omega$ which are termed Demoulin transforms of $\Sigma$ \cite{Bol1954,sasaki2005}. If the asymptotic lines on the surface $\Sigma$ are mapped via the congruence of Lie quadrics to the asymptotic lines of at least one envelope $\Omega$ then we say that $\Sigma$ and $\Omega$ are in asymptotic correspondence. In \cite{mccarthy-schief2017}, we have proposed the term PMQ surface for a surface $\Sigma$ which admits at least one envelope $\Omega$ (of the associated Lie quadrics) which is in asymptotic correspondence with $\Sigma$. A classical theorem \cite{sasaki2005,mayer1932} now states that a PMQ surface is either projective minimal (PM) or a Q surface \cite{Bol1954} (or both). A definition of the interesting but restrictive class of Q surfaces is given in the next section.

Projective minimal surfaces may be classified in terms of the number of distinct (additional) envelopes of the Lie quadrics \cite{Bol1954,sasaki2005,mccarthy-schief2017}. If two envelopes are the same then the projective minimal surface $\Sigma$ is of Godeaux-Rozet type. If there exists only one envelope (apart from $\Sigma$ itself) then $\Sigma$ is of Demoulin type. This classification may also be formulated in terms of certain line congruences, which has the advantage that Demoulin surfaces may be further separated into generic Demoulin surfaces and projective transforms of Tzitz\'eica surfaces. The latter have been discussed in great detail not only in the classical context of affine differential geometry but also in connection with the theory of integrable systems and integrability-preserving discretisations (see \cite{Schief2002,bobenkoschief1999} and references therein). On the other hand, Q surfaces are naturally defined in terms of a so-called semi-Q property which gives rise to the isolation of not only Q surfaces but also other classical classes of surfaces, namely complex, doubly Q and doubly complex surfaces \cite{Bol1954}. The semi-Q property is defined in terms of special generators of Lie quadrics which form so-called Demoulin quadrilaterals. This is made precise in the following section.  

In the following, we first briefly review the classical geometric notions and objects, and classes of surfaces mentioned above and then propose, justify and analyse in detail all of their discrete analogues. It turns out that the discrete and classical theories are remarkably close. Moreover, importantly, it may be argued that the discrete theory is more transparent and, thereby, makes the classical theory more accessible. In this connection, it is observed that it is well known that, in many instances, discrete geometries may be generated from continuous geometries by means of iterative application of transformations such as B\"acklund transformations, thereby preserving the essential features of the continuous geometries (see \cite{bobenko2008} and references therein). In \cite{mccarthy-schief2017}, combinatorial and geometric properties of the afore-mentioned classical Demoulin transformation have been investigated in detail and it turns out that the Demoulin transformation applied to classical PMQ surfaces indeed generates discrete PMQ surfaces of the type proposed here.

\section{Projective minimal surfaces}

\subsection{Algebraic classification of projective minimal surfaces}

We are concerned with surfaces $\Sigma$ in a three-dimensional projective space
$\mathbb{P}^3$ represented by
$\mathbf{[\mbox{\boldmath$r$}]}:\mathbb{R}^2\rightarrow\mathbb{P}^3$,
where $(x,y)\in\mathbb{R}^2$ are taken to be  asymptotic coordinates on $\Sigma$. Since we confine ourselves to hyperbolic surfaces, the asymptotic coordinates are real. By definition of asymptotic coordinates, the second derivatives along the coordinate lines are tangent to $\Sigma$ so that the vector of homogeneous coordinates $\mbox{\boldmath$r$}\in\mathbb{R}^4$ satisfies a pair of linear equations
    \begin{equation*}
        \mbox{\boldmath$r$}_{xx}=p\mbox{\boldmath$r$}_y+\pi\mbox{\boldmath$r$}+\sigma\mbox{\boldmath$r$}_x,\quad
        \mbox{\boldmath$r$}_{yy}=q\mbox{\boldmath$r$}_x+\xi\mbox{\boldmath$r$}+\chi\mbox{\boldmath$r$}_y.
    \end{equation*}
Then, it is well known \cite{Bol1954,FerapontovSchief1999,Schief2002} and may readily be verified that particular homogeneous coordinates, known as the Wilczynski lift  \cite{Wilczynski1907,Wilczynski1908,Wilczynski1909}, may be chosen such that the functions $\sigma$ and $\chi$ vanish. Hence, for convenience, one may parametrise the remaining coefficients of the  ``projective Gauss-Weingarten equations'' according to
    \begin{equation*}
        \mbox{\boldmath$r$}_{xx}=p\mbox{\boldmath$r$}_y+\frac{1}{2}(V-p_y)\mbox{\boldmath$r$},\quad
        \mbox{\boldmath$r$}_{yy}=q\mbox{\boldmath$r$}_x+\frac{1}{2}(W-q_x)\mbox{\boldmath$r$}
    \end{equation*}
in terms of functions $p,q,V$ and $W$. The latter are constrained by the ``projective Gauss-Mainardi-Codazzi equations''
    \begin{align}
        \label{GMC1}p_{yyy}-2p_yW-pW_y&=q_{xxx}-2q_xV-qV_x\\
        \label{GMC2}W_x&=2qp_y+pq_y\\
        \label{GMC3}V_y&=2pq_x+qp_x
    \end{align}
which may be derived from the compatibility condition $\mbox{\boldmath$r$}_{xxyy}=\mbox{\boldmath$r$}_{yyxx}$. It is noted that the Wilczynski lift is unique up to the group of transformations
    \begin{equation*}
        x\rightarrow f(x),\quad y\rightarrow g(y),\quad
        \mbox{\boldmath$r$}\rightarrow\sqrt{f'(x)g'(y)}\,\mbox{\boldmath$r$}
    \end{equation*}
with
    \begin{alignat}{4}
        \label{gauget1}p&\rightarrow p\frac{g'(y)}{[f'(x)]^2},\quad&q&\rightarrow q\frac{f'(x)}{[g'(y)]^2}\\[2mm]
        \label{gauget2}V&\rightarrow \frac{V+S(f)}{[f'(x)]^2},\quad&W&\rightarrow\frac{W+S(g)}{[g'(y)]^2},
    \end{alignat}
where $S$ denotes the Schwarzian derivative
    \begin{equation*}
        S(f)=\frac{f'''}{f'}-\frac{3}{2}\left(\frac{f''}{f'}\right)^2.
    \end{equation*}
The quadratic form
    \begin{equation*}
        pq\,dxdy
    \end{equation*}
is a projective invariant and is known as the projective metric. Throughout the paper, we shall assume that $\Sigma$ is not ruled,
i.e., $pq\neq 0$. 

In view of the classification of projective minimal surfaces, it turns out convenient to define functions $\alpha$ and $\beta$ by
    \begin{equation*}
        \alpha=p^2W-pp_{yy}+\frac{p_y^2}{2},\quad
%
        \beta=q^2 V-qq_{xx}+\frac{q_x^2}{2}
    \end{equation*}
so that the Gauss-Mainardi-Codazzi equations
\eqref{GMC1}-\eqref{GMC3} adopt the form
    \begin{align}\label{gmc}
        &\text{\qquad\quad\,}\frac{\alpha_y}{p}=\frac{\beta_x}{q}\\[1mm]
        (\ln p)_{xy}&=pq+\frac{A}{p},\quad
        A_y=-p\left(\frac{\alpha}{p^2}\right)_x\\[2mm]
        (\ln q)_{xy}&=pq+\frac{B}{q},\quad
        B_x=-q\left(\frac{\beta}{q^2}\right)_y.
    \end{align}
This is directly verified by eliminating the functions $A$ and $B$.

    \begin{definition}
        A surface $\Sigma$ in $\mathbb{P}^3$
        is said to be projective minimal if it is critical for the
        area functional
        $\iint pq\,dxdy.$
    \end{definition}
    \begin{theorem}[\cite{thomsen1925}]\label{elcond}
    A surface $\Sigma$ in $\mathbb{P}^3$ is projective minimal if
    and only if
        \begin{equation}\label{eulerlagrange}
            \frac{\alpha_y}{p}=\frac{\beta_x}{q}=0.
        \end{equation}
    \end{theorem}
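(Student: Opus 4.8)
The plan is to set up the variational problem for the functional $\iint pq\,dxdy$ directly in terms of the free data of the projective Gauss-Weingarten system and then match the resulting Euler-Lagrange equations against the rewritten Gauss-Mainardi-Codazzi equations. Concretely, one cannot vary $p$ and $q$ independently, since they are tied together by the constraints \eqref{GMC1}--\eqref{GMC3}; so first I would identify an appropriate set of potentials (essentially $p$, $q$, $V$, $W$ modulo the constraints \eqref{GMC2}--\eqref{GMC3}, or equivalently the potentials $A$, $B$ introduced above) and express the variation of the area integrand subject to those constraints, introducing Lagrange multipliers for \eqref{GMC2} and \eqref{GMC3}. The cleanest route is probably to use \eqref{GMC2}--\eqref{GMC3} to treat $V$ and $W$ as determined (up to functions of one variable) by $p$ and $q$, so that after integration by parts the only genuinely free variations are those of $p$ and $q$; the vanishing of the coefficient of $\delta p$ and of $\delta q$ in $\delta\iint pq\,dxdy$ then yields two scalar equations.

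The key computational step is to show that these two Euler-Lagrange equations are precisely $\alpha_y/p = 0$ and $\beta_x/q = 0$. Here I would exploit the form \eqref{gmc} of the Gauss-Mainardi-Codazzi equations: the definitions
\begin{equation*}
\alpha = p^2 W - p p_{yy} + \tfrac{1}{2}p_y^2,\qquad \beta = q^2 V - q q_{xx} + \tfrac{1}{2}q_x^2
\end{equation*}
are exactly the combinations that arise when one computes $\delta W$ and $\delta V$ in terms of $\delta p$, $\delta q$ from the constraints and substitutes back. Since by \eqref{gmc} the relation $\alpha_y/p = \beta_x/q$ already holds identically on any surface, it suffices to prove that the single quantity $\alpha_y/p$ (equivalently $\beta_x/q$) vanishes; this is what I expect to fall out of the stationarity condition after the integration by parts, with the symmetry between the two constraints ensuring both halves of \eqref{eulerlagrange} simultaneously. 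One should also check invariance: the integrand $pq\,dxdy$ and the condition \eqref{eulerlagrange} must both be invariant under the gauge group \eqref{gauget1}--\eqref{gauget2}, which is a consistency check on the computation (the Schwarzian shift in $V$, $W$ must not affect $\beta_x/q$, $\alpha_y/p$), and this invariance also lets one work in a convenient gauge if desired.

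The main obstacle will be handling the constrained variation correctly: the functions $p,q,V,W$ are not free, so a naive variation of $\iint pq$ gives nothing, and one must carefully account for how an admissible deformation of the surface propagates through \eqref{GMC1}--\eqref{GMC3}. Getting the boundary terms to drop and correctly tracking which third-order derivative terms cancel (the $p_{yyy}$, $q_{xxx}$ terms in \eqref{GMC1}) is the delicate part; the role of \eqref{GMC1} is to guarantee compatibility of the multiplier system rather than to contribute directly to the Euler-Lagrange equations. Once the bookkeeping is organised so that $\delta\big(\iint pq\,dxdy\big) = \iint\big[(\text{something})\,\delta p + (\text{something})\,\delta q\big]\,dxdy$ with the two "somethings" identified as multiples of $\beta_x/q$ and $\alpha_y/p$, the theorem follows immediately. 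I would present the forward direction (critical $\Rightarrow$ \eqref{eulerlagrange}) and the converse together, since the computation is reversible: the displayed variation vanishes for all admissible $\delta p,\delta q$ if and only if \eqref{eulerlagrange} holds.
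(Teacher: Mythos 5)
The paper does not actually prove this statement: it is quoted as a classical result of Thomsen with a citation, and the only related computation carried out in the text is the verification that $\alpha_y/p=\beta_x/q$ is one of the Gauss--Mainardi--Codazzi equations \eqref{gmc}. So your proposal has to stand on its own, and as written it has two genuine gaps.

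First, the central claim --- that after eliminating $V$ and $W$ via \eqref{GMC2}, \eqref{GMC3} and integrating by parts, the coefficients of $\delta p$ and $\delta q$ in $\delta\iint pq\,dx\,dy$ come out as multiples of $\beta_x/q$ and $\alpha_y/p$ --- \emph{is} the content of the theorem, and you only assert that you ``expect'' it to fall out. Nothing in the proposal actually produces the specific combinations $\alpha=p^2W-pp_{yy}+\tfrac12 p_y^2$ and $\beta=q^2V-qq_{xx}+\tfrac12 q_x^2$; without that computation there is no proof. Second, your handling of the constraints is not consistent. After solving \eqref{GMC2}, \eqref{GMC3} for $W_x$ and $V_y$, equation \eqref{GMC1} remains as a genuine constraint coupling $p$ and $q$ (and the integration ``constants'' $v(x)$, $w(y)$), so $\delta p$ and $\delta q$ are \emph{not} independent admissible variations; the true freedom is a single scalar function, matching the single scalar condition that \eqref{eulerlagrange} amounts to once the identity \eqref{gmc} is taken into account. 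Your remark that \eqref{GMC1} only ``guarantees compatibility of the multiplier system rather than contributing directly'' is an unjustified assertion: in a correct constrained variation the multiplier attached to \eqref{GMC1} is determined by the stationarity conditions for $V$ and $W$ and then feeds into the $\delta p$, $\delta q$ equations. The fact that the two conditions you would obtain collapse to one is an output of doing this bookkeeping correctly, not a license to vary $p$ and $q$ freely. The gauge-invariance check against \eqref{gauget1}, \eqref{gauget2} is a sound sanity test, but it does not substitute for the missing computation.
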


There exist classical canonical classes of projective minimal surfaces as listed below. Thus, a projective minimal surface is said to be
    \begin{enumerate}[label=(\alph*)]
        \item\label{class1}
            generic if $\alpha\neq 0$ and $\beta\neq 0$.
        \item\label{class2}
            of Godeaux-Rozet type if $\alpha\neq0,\,\beta=0$ or
            $\alpha=0,\,\beta\neq0$.
        \item\label{class3}
            of Demoulin type if $\alpha=\beta=0$. If, in addition,
            $p=q$ then $\Sigma$ is said to be of Tzitz\'{e}ica type.
    \end{enumerate}
It is noted that, using a gauge transformation of the form \eqref{gauget1}, \eqref{gauget2}, one may normalise $\alpha$
and $\beta$ to be one of $-1,\,1$ or $0$. This normalisation corresponds to canonical forms of the integrable system \eqref{gmc}-\eqref{eulerlagrange} underlying projective minimal surfaces \cite{FerapontovSchief1999}.

\subsection{Geometric classification of projective minimal surfaces}
It turns out that the above algebraic classification of projective minimal surfaces admits a natural discrete analogue. This is the subject of a separate publication. Here, our discretisation procedure is of a geometric nature based on the classical geometric classification scheme of projective minimal surfaces which involves Lie quadrics and their envelopes.
     \begin{definition}\label{liedef}
                Let $\mathbf{[\mbox{\boldmath$r$}]}:\mathbb{R}^2\rightarrow\mathbb{P}^3$ be a parametrisation of a surface
                $\Sigma$ in terms of asymptotic coordinates.
                Let $\mbox{\boldmath$p$}=\mbox{\boldmath$r$}(x,y)$ be a point on
                $\Sigma$ and let $\mbox{\boldmath$p$}_{\pm}$ be two
                additional points on the $x$-asymptotic line passing
                through \mbox{\boldmath$p$}, given by $\mbox{\boldmath$p$}_{\pm}=\mbox{\boldmath$r$}(x\pm\epsilon,y)$. Let $l_{\pm}$ and $l$
                be the three lines tangent to the $y$-asymptotic lines at $\mbox{\boldmath$p$}_{\pm}$ and \mbox{\boldmath$p$} respectively.
                These uniquely
                define a quadric $Q_{\epsilon}$ containing them as rectilinear generators. The Lie quadric at $(x,y)$ is then the
                unique quadric defined by
                    \begin{equation*}
                        Q(x,y)=\underset{\epsilon\rightarrow 0}{\lim}\,\,Q_{\epsilon}(x,y).
                    \end{equation*}
    \end{definition}

        It is important to emphasise that the above definition of a Lie quadric may be shown to be symmetric in $x$ and $y$, that is,
        interchanging $x$-asymptotic lines and $y$-asymptotic lines leads to the same Lie quadric $Q$. This is reflected in the explicit representation of the Lie quadric $Q$
given below \mbox{\cite{Bol1954,Ferapontov2000}}. For brevity, in the
following, notationally, we do not distinguish between a Lie quadric in
$\mathbb{R}\mathbb{P}^3$ and its representation in the space of
homogeneous coordinates $\mathbb{R}^4$.

    \begin{theorem}
        The Lie quadric $Q$ (at a point $(x,y)$) admits the parametrisation
            \begin{equation*}
                Q=\textswab{n}+\mu\textswab{r}^1+\nu\textswab{r}^2+\mu\nu\textswab{r},
            \end{equation*}
        where $\mu$ and $\nu$ parametrise the two families of generators of $Q$ and
        $\{\textswab{r},\textswab{r}^1,\textswab{r}^2,\textswab{n}\}$ is the Wilczynski frame given by
    \begin{equation*}
        \begin{split}
            &\textswab{r}=\mbox{\boldmath$r$},\quad
            \textswab{r}^1=\mbox{\boldmath$r$}_x-\frac{q_x}{2q}\mbox{\boldmath$r$},\quad
            \textswab{r}^2=\mbox{\boldmath$r$}_y-\frac{p_y}{2p}\mbox{\boldmath$r$}\\
            &\textswab{n}=\mbox{\boldmath$r$}_{xy}
            -\frac{p_y}{2p}\mbox{\boldmath$r$}_x -\frac{q_x}{2q}\mbox{\boldmath$r$}_y+\left(\frac{p_yq_x}{4pq}
            -\frac{pq}{2}\right)\mbox{\boldmath$r$}.
        \end{split}
    \end{equation*}
    \end{theorem}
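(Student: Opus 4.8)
The plan is to characterise $Q=\lim_{\epsilon\to0}Q_\epsilon$ by a list of incidence and osculation conditions and then to verify that the quadric carrying the stated parametrisation satisfies them. Recall that quadrics in $\mathbb{P}^3$ are the zero loci of quadratic forms on $\mathbb{R}^4$ and hence form a $\mathbb{P}^9$, and that requiring a given line to lie on a quadric imposes three linear conditions on the form; in particular three pairwise skew lines determine a unique and automatically non-degenerate quadric. Fixing the value of $y$ under consideration and writing $l(x)$ for the tangent to the $y$-asymptotic line at $\mbox{\boldmath$r$}(x,y)$, that is, the line through $\mbox{\boldmath$r$}(x,y)$ in the direction $\mbox{\boldmath$r$}_y(x,y)$, one checks with the Wilczynski frame that $l(x_0)$, $l(x_0-\epsilon)$, $l(x_0+\epsilon)$ are pairwise skew for all small $\epsilon\neq0$ (the relevant $4\times4$ determinants behave like $\epsilon^2\det[\,\textswab{r},\textswab{r}^1,\textswab{r}^2,\textswab{n}\,]$, which is non-zero), so $Q_\epsilon$ is well defined, depends analytically on $\epsilon$, and possesses a limit $Q$.

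I would first reformulate the defining incidences. Let $M$ be the symmetric matrix of the limiting quadratic form and set $G(x)=\mbox{\boldmath$r$}^{T}M\mbox{\boldmath$r$}$, $H(x)=\mbox{\boldmath$r$}^{T}M\mbox{\boldmath$r$}_y$, $K(x)=\mbox{\boldmath$r$}_y^{T}M\mbox{\boldmath$r$}_y$, all evaluated at $(x,y)$. The requirements $l(x_0),l(x_0\pm\epsilon)\subset Q_\epsilon$ become, on letting $\epsilon\to0$, the nine conditions $G=G'=G''=H=H'=H''=K=K'=K''=0$ at $x=x_0$. The $9\times10$ coefficient matrix of this linear system, written in the Wilczynski basis, is readily seen to have full rank $9$ (this is the one point where a short direct check, using the frame expansions below, is needed), so these nine conditions determine $M$ up to scale.

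It remains to identify the candidate and verify the conditions. In the coordinates $(a,b,c,d)$ attached to the basis $\{\textswab{r},\textswab{r}^1,\textswab{r}^2,\textswab{n}\}$ at $(x_0,y_0)$, the image of $(\mu,\nu)\mapsto\textswab{n}+\mu\textswab{r}^1+\nu\textswab{r}^2+\mu\nu\textswab{r}$ is $(a,b,c,d)=(\mu\nu,\mu,\nu,1)$, whose closure is the non-degenerate quadric $ad-bc=0$; the asymptotic tangents $[\,\mbox{\boldmath$r$},\mbox{\boldmath$r$}_x\,]=[\,\textswab{r},\textswab{r}^1\,]$ and $[\,\mbox{\boldmath$r$},\mbox{\boldmath$r$}_y\,]=[\,\textswab{r},\textswab{r}^2\,]$ appear as the limiting members of the two rulings, as one expects of a Lie quadric. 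To verify that $M$, the matrix of $ad-bc$, satisfies the nine conditions I would express $\mbox{\boldmath$r$}$, $\mbox{\boldmath$r$}_x$, $\mbox{\boldmath$r$}_{xx}$, $\mbox{\boldmath$r$}_y$, $\mbox{\boldmath$r$}_{xy}$ and $\mbox{\boldmath$r$}_{xxy}$ at $(x_0,y_0)$ in this basis, using the definitions of $\textswab{r}^1,\textswab{r}^2,\textswab{n}$ together with the Gauss--Weingarten equations $\mbox{\boldmath$r$}_{xx}=p\mbox{\boldmath$r$}_y+\tfrac12(V-p_y)\mbox{\boldmath$r$}$ and $\mbox{\boldmath$r$}_{yy}=q\mbox{\boldmath$r$}_x+\tfrac12(W-q_x)\mbox{\boldmath$r$}$; the coefficient $\alpha$ enters only through the $\textswab{r}$-component of $\mbox{\boldmath$r$}_{xxy}$, which is paired only with vanishing frame-components and therefore plays no role. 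Substituting into the polar form of $ad-bc$, the eight conditions $G=G'=G''=H=H'=H''=K=K'=0$ fall out essentially by inspection, while the last one, $K''=2\,\mbox{\boldmath$r$}_{xxy}^{T}M\mbox{\boldmath$r$}_y+2\,\mbox{\boldmath$r$}_{xy}^{T}M\mbox{\boldmath$r$}_{xy}$, equals $-pq+pq=0$. By the uniqueness above, $M$ is the matrix of the stated quadric.

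The only genuine obstacle I anticipate is the bookkeeping of the second-order data — the repeated use of the Gauss--Weingarten equations to bring $\mbox{\boldmath$r$}_{xxy}$ into the frame, and the recognition of the $-pq+pq$ cancellation in $K''$; the hypothesis $pq\neq0$ and the genericity of the Wilczynski frame are exactly what make $Q_\epsilon$, hence $Q$, well defined and non-degenerate. As a consistency check one may rerun the computation with $x$ and $y$ (and $\textswab{r}^1$ and $\textswab{r}^2$) interchanged: the same quadric $ad-bc=0$ results, which is the symmetry of the Lie quadric noted above.
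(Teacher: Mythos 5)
The paper does not prove this theorem at all -- it is quoted as a known explicit representation with a citation to Bol and Ferapontov -- so your argument is necessarily an independent, self-contained verification rather than a variant of the paper's proof. Your route is sound. I checked the two computations you flag as the genuine work. First, in the basis $\{\textswab{r},\textswab{r}^1,\textswab{r}^2,\textswab{n}\}$ the nine limiting conditions reduce, after successive elimination, to $B_{00}=B_{01}=B_{02}=B_{11}=B_{22}=0$ (from $G,G',H,G'',K$), then $B_{03}+B_{12}=0$ (from $H'$), then $B_{23}=0$, $B_{13}=0$ and finally $B_{33}=0$ (from $K',H'',K''$); this is visibly rank $9$, with one-dimensional solution space spanned by $ad-bc$, so uniqueness holds and the candidate is forced. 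Second, the $K''$ computation is as you say: $\mbox{\boldmath$r$}_{xxy}=pq\,\mbox{\boldmath$r$}_x+\tfrac12(V+p_y)\mbox{\boldmath$r$}_y+(\ast)\mbox{\boldmath$r$}$ gives $\mbox{\boldmath$r$}_{xxy}^{T}M\mbox{\boldmath$r$}_y=-\tfrac{pq}{2}$, while $F(\mbox{\boldmath$r$}_{xy})=\tfrac{pq}{2}$, and the coefficient $(\ast)$ (the only place $W$, hence $\alpha$, appears) pairs only with the already-vanishing entries $B_{00},B_{02}$. One presentational suggestion: rather than asserting separately that $Q_\epsilon$ ``possesses a limit,'' row-reduce the nine $\epsilon$-conditions by divided differences; since they converge to a system of full rank $9$, the system has rank $9$ for small $\epsilon$ and its one-dimensional kernel converges to that of the limit system -- this yields existence of $\lim Q_\epsilon$ and its identification with $ad-bc=0$ in a single stroke. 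What your approach buys, beyond filling a gap the paper leaves to the literature, is that the same frame computation exhibits the $x\leftrightarrow y$ symmetry of $Q$ asserted (also without proof) in the remark following the theorem.
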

It is observed that the lines $(\textswab{r},\,\textswab{r}^1)$ and
$(\textswab{r},\,\textswab{r}^2)$ are tangent to $\Sigma$, while the line
$(\textswab{r},\,\textswab{n})$ is transversal to $\Sigma$ and plays
the role of a projective normal. It is known as the first directrix
of Wilczynski.
    \begin{definition}
        A surface $\Omega$ parametrised by
        $[\bomega]:\mathbb{R}^2\rightarrow\mathbb{P}^3$ is an envelope
        of the two parameter family of Lie quadrics $\{Q(x,y)\}$ associated with a surface $\Sigma$ if $\bomega(x,y)\in Q(x,y)$ such that
        $\Omega$ touches $Q(x,y)$ at $\bomega(x,y)$.
    \end{definition}
We note that, in particular, $\Sigma$ is itself an envelope of
$\{Q\}$. Generically, there exist four additional envelopes as
stated below \cite{Bol1954}.
    \begin{theorem}\label{envel}
    
    If $\alpha,\,\beta\geq0$ then the Lie quadrics $\{Q\}$ possess
        four real additional envelopes
            \begin{align*}
                \bomega_{++}&=\textswab{n}+\hat{\mu}\textswab{r}^1+\hat{\nu}\textswab{r}^2+\hat{\mu}\hat{\nu}\textswab{r}\\
                \bomega_{+-}&=\textswab{n}+\hat{\mu}\textswab{r}^1-\hat{\nu}\textswab{r}^2-\hat{\mu}\hat{\nu}\textswab{r}\\
                \bomega_{-+}&=\textswab{n}-\hat{\mu}\textswab{r}^1+\hat{\nu}\textswab{r}^2-\hat{\mu}\hat{\nu}\textswab{r}\\
                \bomega_{--}&=\textswab{n}-\hat{\mu}\textswab{r}^1-\hat{\nu}\textswab{r}^2+\hat{\mu}\hat{\nu}\textswab{r},
            \end{align*}
        where
            \begin{equation*}
                \hat{\mu}=\sqrt{\frac{\alpha}{2p^2}},\quad\hat{\nu}=\sqrt{\frac{\beta}{2q^2}}.
            \end{equation*}
     These are distinct if $\alpha,\beta \neq 0$.
    \end{theorem}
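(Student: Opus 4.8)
The plan is to find all envelopes explicitly by imposing the touching condition in the Wilczynski frame. Since $\{\textswab{r},\textswab{r}^1,\textswab{r}^2,\textswab{n}\}$ is a basis of $\mathbb{R}^4$, every point of $Q$ away from the two rectilinear generators through $[\textswab{r}]$ is uniquely of the form $\bomega=\textswab{n}+\mu\textswab{r}^1+\nu\textswab{r}^2+\mu\nu\textswab{r}$ for functions $\mu=\mu(x,y)$, $\nu=\nu(x,y)$; the excluded generators form a lower-dimensional sub-configuration which one treats separately and which turns out to yield only $\Sigma$ itself. A surface $[\bomega]$ is an envelope if and only if its tangent plane at $\bomega$ coincides with that of $Q$ at $\bomega$, and the latter is spanned in $\mathbb{R}^4$ by $\bomega$ together with the generator directions $\partial_\mu Q=\textswab{r}^1+\nu\textswab{r}$ and $\partial_\nu Q=\textswab{r}^2+\mu\textswab{r}$, equivalently by $\textswab{n}-\mu\nu\textswab{r}$, $\textswab{r}^1+\nu\textswab{r}$, $\textswab{r}^2+\mu\textswab{r}$. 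Thus the envelope condition is that $\bomega_x$ and $\bomega_y$ both lie in this three-dimensional subspace, and one checks directly that a vector $a_0\textswab{r}+a_1\textswab{r}^1+a_2\textswab{r}^2+a_3\textswab{n}$ lies in it precisely when $a_0=a_1\nu+a_2\mu-a_3\mu\nu$.

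To apply this criterion I would first derive the structure equations of the Wilczynski frame: differentiating the definitions of $\textswab{r}^1,\textswab{r}^2,\textswab{n}$ and eliminating $\mbox{\boldmath$r$}_{xx}$ and $\mbox{\boldmath$r$}_{yy}$ by the projective Gauss--Weingarten equations gives, for the $x$-derivatives,
\begin{align*}
\textswab{r}_x &= \tfrac{q_x}{2q}\textswab{r}+\textswab{r}^1, &
\textswab{r}^1_x &= \tfrac{\beta}{2q^2}\textswab{r}-\tfrac{q_x}{2q}\textswab{r}^1+p\,\textswab{r}^2,\\
\textswab{r}^2_x &= d\,\textswab{r}+\tfrac{q_x}{2q}\textswab{r}^2+\textswab{n}, &
\textswab{n}_x &= d_1\,\textswab{r}+d\,\textswab{r}^1+\tfrac{\beta}{2q^2}\textswab{r}^2-\tfrac{q_x}{2q}\textswab{n},
\end{align*}
with $d=\tfrac{pq}{2}-\big(\tfrac{p_y}{2p}\big)_x$ and $d_1$ a further expression in $p,q,V,W$ and their derivatives, the $y$-derivatives being the mirror images under $x\leftrightarrow y$. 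The essential computation is that, after using the Gauss--Mainardi--Codazzi equations \eqref{GMC1}--\eqref{GMC3}, all terms built from $q$ and its derivatives cancel in $d_1$, leaving $d_1=\tfrac12 pW-\tfrac12 p_{yy}+\tfrac{p_y^2}{4p}=\tfrac{\alpha}{2p}$.

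The remainder is then mechanical. Computing $\bomega_x$ from the structure equations, expanding it in the frame and substituting into $a_0=a_1\nu+a_2\mu-a_3\mu\nu$, the terms carrying $\mu_x$ and $\nu_x$ cancel, the $\mu\nu$-terms cancel (the diagonal entry $-\tfrac{q_x}{2q}$ of $\textswab{n}_x$ being essential here), and the $\mu$- and $\nu$-linear terms cancel because the $\textswab{r}$-entry of $\textswab{r}^1_x$ equals the $\textswab{r}^2$-entry of $\textswab{n}_x$ and the $\textswab{r}^1$-entry of $\textswab{n}_x$ equals the $\textswab{r}$-entry of $\textswab{r}^2_x$; what survives is just $p\mu^2=d_1$, i.e.\ $\mu^2=\tfrac{\alpha}{2p^2}=\hat\mu^2$. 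By the symmetry of the Lie quadric and of the ansatz for $\bomega$ under $x\leftrightarrow y$ (with $\textswab{r}^1\leftrightarrow\textswab{r}^2$, $p\leftrightarrow q$, $\alpha\leftrightarrow\beta$, $\mu\leftrightarrow\nu$), the condition coming from $\bomega_y$ is $\nu^2=\tfrac{\beta}{2q^2}=\hat\nu^2$. Hence the envelopes distinct from $\Sigma$ are precisely those with $\mu=\pm\hat\mu$ and $\nu=\pm\hat\nu$; when $\alpha,\beta\geq0$ all four sign combinations are real and produce the four surfaces $\bomega_{\pm\pm}$ of the statement. Since $(\mu,\nu)\mapsto[\textswab{n}+\mu\textswab{r}^1+\nu\textswab{r}^2+\mu\nu\textswab{r}]$ is injective, these four are pairwise distinct, and distinct from $\Sigma$, as soon as $\hat\mu\neq0$ and $\hat\nu\neq0$, that is, $\alpha\neq0$ and $\beta\neq0$.

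I expect the one genuinely laborious step to be the derivation of the frame structure equations and, above all, the verification via \eqref{GMC1}--\eqref{GMC3} that the $\textswab{r}$-component of $\textswab{n}_x$ collapses to $\alpha/(2p)$ (and, symmetrically, that of $\textswab{n}_y$ to $\beta/(2q)$); this identity is exactly what decouples the two touching conditions into $\mu^2=\hat\mu^2$ and $\nu^2=\hat\nu^2$. The rest --- the membership criterion, the cancellations in $\bomega_x$, the reality/counting/distinctness argument, and the brief check that the generators omitted from the chart contribute no envelope besides $\Sigma$ --- is routine bookkeeping.
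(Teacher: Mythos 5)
The paper does not actually prove Theorem \ref{envel} --- it is quoted as a classical result with a citation to Bol --- so there is no in-house argument to compare against. Your proposal is, as far as I can check, a correct and self-contained derivation by the standard moving-frame method. I verified the two points on which everything hinges: (i) the membership criterion $a_0=a_1\nu+a_2\mu-a_3\mu\nu$ for the tangent plane of $Q$ at $\textswab{n}+\mu\textswab{r}^1+\nu\textswab{r}^2+\mu\nu\textswab{r}$ is right, and (ii) the $\textswab{r}$-component of $\textswab{n}_x$ does collapse to $\alpha/(2p)$: writing it out, one gets $\tfrac{pW}{2}-pq_x+\tfrac{V_y}{2}-\tfrac{p_{yy}}{2}+\tfrac{p_y^2}{4p}-\tfrac{qp_x}{2}$ after all the $q$-derivative terms cancel, and substituting $V_y=2pq_x+qp_x$ from \eqref{GMC3} leaves exactly $\alpha/(2p)$. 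With the other frame coefficients as you list them ($\textswab{r}^1_x$ and $\textswab{n}_x$ sharing the entry $\beta/(2q^2)$, $\textswab{r}^2_x$ and $\textswab{n}_x$ sharing the entry $d$), the touching condition for $\bomega_x$ does reduce to $p\mu^2=\alpha/(2p)$, and the $y$-condition follows by the manifest symmetry. Two small things you correctly flag but should not omit in a full write-up: the check that no envelope lives entirely on the two generators through $[\textswab{r}]$ excluded from your chart (only $\Sigma$ itself sits there), and the observation that ``distinct'' in the theorem means the four maps $\bomega_{\pm\pm}$ are pairwise distinct pointwise, which your injectivity remark handles.
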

    \begin{remark}
        The above envelopes are called the Demoulin transforms of
        $\Sigma$. We denote them by $\Sigma_{++}$,
        $\Sigma_{+-}$, $\Sigma_{-+}$ and $\Sigma_{--}$. In general, by construction, these have first-order contact with the Lie quadrics, while, by definition of Lie quadrics, the surface $\Sigma$ has second-order contact. It turns out that there exists a natural discrete analogue of this classical fact.
    \end{remark}
As indicated in the above theorem, the expressions for $\hat{\mu}$
and $\hat{\nu}$ imply that whether $\alpha$ and $\beta$ vanish or
not is related to the number of distinct envelopes. Accordingly, the geometric
interpretation of the algebraic classification
\mbox{\ref{class1}-\ref{class3}} is then that a projective minimal
surface $\Sigma$ is
    \begin{enumerate}[label=(\alph*)]
        \item
            generic if the set of Lie quadrics $\{Q\}$ has four
            distinct additional envelopes.
        \item
            of Godeaux-Rozet type if $\{Q\}$ has two distinct additional
            envelopes.
        \item
            of Demoulin type if $\{Q\}$ has one additional
            envelope.
    \end{enumerate}
    \begin{remark}
        By virtue of the Gauss-Mainardi-Codazzi equation \eqref{gmc},
        Theorem \ref{envel} implies that a surface $\Sigma$ in
        $\mathbb{P}^3$ is necessarily projective minimal if there exist less than four additional distinct envelopes.
        Specifically, if the Lie quadrics of $\Sigma$ have only two additional distinct envelopes
        then $\Sigma$ is of Godeaux-Rozet type. If the Lie quadrics of $\Sigma$ have only one
        additional envelope then $\Sigma$ is of Demoulin type.
    \end{remark}
    \begin{remark}
        For any fixed $(x,y),$ the points
        $\bomega_{++}(x,y),\,\bomega_{+-}(x,y),\bomega_{--}(x,y)$ and
        $\bomega_{-+}(x,y)$ of the Demoulin transforms of $\Sigma$
        may be regarded as the vertices of a quadrilateral (cf.\ Figure \ref{demquad}) 
            \begin{equation*}
                [\bomega_{++}(x,y),\,\bomega_{+-}(x,y),\,\,\bomega_{--}(x,y),\bomega_{-+}(x,y)]
            \end{equation*}
        which is known as the Demoulin quadrilateral \cite{Bol1954}.
        Then, the parametrisation of the envelopes set down in Theorem \ref{envel} shows that the extended edges
        $[\bomega_{++}(x,y),\,\bomega_{+-}(x,y)]$, $[\bomega_{+-}(x,y),\,\bomega_{--}(x,y)]$,
        $[\bomega_{--}(x,y),\,\bomega_{-+}(x,y)]$ and
        $[\bomega_{-+}(x,y),\,\bomega_{++}(x,y)]$
        are generators of the Lie quadric $Q(x,y)$.
        \end{remark}
    \begin{figure}[h]
        \centering
        \includegraphics[scale=1]{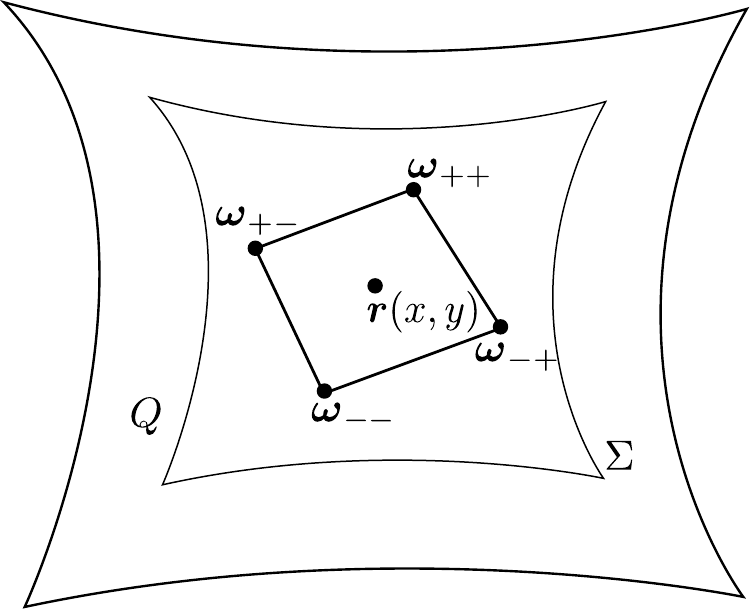}
        \caption{The Demoulin quadrilateral}\label{demquad}
    \end{figure}
Remarkably, it turns out that the Demoulin transformation acts within the class
of projective minimal surfaces and, specifically, within the classes
\ref{class1}-\ref{class3} \cite{sasaki2005,mayer1932}.
    \begin{theorem}\label{grouping1}
        Let $\Sigma$ be a projective minimal surface. Then, each of its Demoulin transforms is projective minimal.
        Moreover, the number $n\in\{1,2,4\}$ of distinct Demoulin transforms
        of $\Sigma$ is preserved by the Demoulin transformation.
        In particular, if $\Sigma$ is of Godeaux-Rozet type
        then each of its Demoulin transforms is of Godeaux-Rozet type. If
        $\Sigma$ is of Demoulin type then its transform is of
        Demoulin type.
    \end{theorem}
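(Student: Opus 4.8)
The plan is to compute explicitly how the quantities $\alpha,\beta,p,q$ transform under the Demoulin transformation and then read off the three assertions of the theorem from Theorem~\ref{elcond} and Theorem~\ref{envel}. First I would fix one of the four Demoulin transforms, say $\Sigma_{++}$, with $\bomega_{++}=\textswab{n}+\hat\mu\textswab{r}^1+\hat\nu\textswab{r}^2+\hat\mu\hat\nu\textswab{r}$, and determine asymptotic coordinates on $\Sigma_{++}$. The key structural fact — which follows from the construction of the Demoulin quadrilateral and the remark that the extended edges are rectilinear generators of $Q$ — is that the asymptotic net of $\Sigma_{++}$ is carried by the congruence of Lie quadrics from the asymptotic net of $\Sigma$; in particular the same coordinates $(x,y)$ (up to the gauge freedom \eqref{gauget1}--\eqref{gauget2}) serve as asymptotic coordinates on the transform. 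I would then lift $\bomega_{++}$ to its Wilczynski normalisation and, using the projective Gauss--Weingarten equations for $\textswab{r}$ together with the definitions of $\hat\mu,\hat\nu$, differentiate $\bomega_{++}$ twice in $x$ and in $y$ to extract the transformed coefficients $\hat p,\hat q,\hat V,\hat W$, and hence $\hat\alpha=\hat p^2\hat W-\hat p\hat p_{yy}+\tfrac12\hat p_y^2$ and $\hat\beta$.

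Once these transformation formulae are in hand, the three claims are immediate. For the first, since $\Sigma$ is projective minimal we have $\alpha_y/p=\beta_x/q=0$ by Theorem~\ref{elcond}; substituting into the expressions for $\hat\alpha_y/\hat p$ and $\hat\beta_x/\hat q$ should make them vanish as well, so $\Sigma_{++}$ is projective minimal, and the same computation applies verbatim to the other three transforms. For the second and third claims I would track the vanishing of $\alpha$ and $\beta$: the transformation formulae should show that $\hat\alpha$ is (a nonzero multiple of) $\alpha$ and $\hat\beta$ of $\beta$, or more precisely that $\hat\alpha=0 \iff \alpha=0$ and similarly for $\beta$, at least on the open dense set where the relevant coefficients are nonvanishing. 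Then by the geometric interpretation following Theorem~\ref{envel}, the number $n\in\{1,2,4\}$ of distinct additional envelopes — which is $4$ when $\alpha\beta\neq0$, $2$ when exactly one vanishes, $1$ when both vanish — is preserved, giving the Godeaux--Rozet $\to$ Godeaux--Rozet and Demoulin $\to$ Demoulin statements.

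The main obstacle I anticipate is the bookkeeping in the first step: producing the Wilczynski lift of $\bomega_{++}$ and the resulting formulae for $\hat p,\hat q,\hat V,\hat W$ is a lengthy computation, and one must be careful about the gauge freedom, since $\hat\alpha$ and $\hat\beta$ are only gauge-covariant (scaling by powers of $f',g'$), so the \emph{correct} invariant statement is that the \emph{zero set} of $\hat\alpha$ equals that of $\alpha$. A secondary subtlety is the square roots in $\hat\mu=\sqrt{\alpha/2p^2}$, $\hat\nu=\sqrt{\beta/2q^2}$: on the locus where $\alpha$ or $\beta$ changes sign, or vanishes to odd order, one must check that the transform is still a smooth surface and that the formulae extend by continuity; restricting to the open set where $\alpha,\beta$ have definite sign (as in the hypothesis $\alpha,\beta\ge 0$ of Theorem~\ref{envel}) sidesteps this. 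Given those cautions, the proof reduces to the direct substitution sketched above, and I would present only the final transformation formulae together with the short deductions, relegating the differentiation to a verification the reader can carry out.
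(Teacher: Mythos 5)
The paper does not actually prove this theorem: it is quoted as a classical result with references to Mayer and Sasaki, so there is no in-paper argument to compare yours against. Judged on its own terms, your outline identifies the natural computational route --- pass to a Wilczynski lift of $\bomega_{++}$, extract $\hat p,\hat q,\hat V,\hat W$ and hence $\hat\alpha,\hat\beta$, then invoke Theorem \ref{elcond} and the envelope count of Theorem \ref{envel} --- and your preliminary step (that $(x,y)$ remain asymptotic coordinates on the transform) is indeed available here, precisely because $\Sigma$ is projective minimal and hence in asymptotic correspondence with its envelopes.

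There is nevertheless a genuine gap: essentially all of the mathematical content of the theorem lives in the transformation formulae that you defer. The two facts you need --- that $\hat\alpha_y/\hat p=\hat\beta_x/\hat q=0$ follows from $\alpha_y/p=\beta_x/q=0$, and that $\hat\alpha$ vanishes iff $\alpha$ does (resp.\ $\hat\beta$ and $\beta$, possibly after a swap of roles) --- are the assertions of the theorem restated in the transformed variables, and writing that the formulae ``should'' show this is assuming the conclusion. It is not a priori clear that $\hat\alpha$ is a nonzero multiple of $\alpha$ rather than, say, an expression involving $\beta$ or derivatives of $\alpha$; only the explicit computation (or a citation to it) settles this, and the gauge covariance you mention does not by itself reduce the question to a zero-set statement. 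A second unaddressed point is the degenerate count $n\in\{1,2\}$: when $\alpha=0$ the two surviving transforms are $\textswab{n}\pm\hat\nu\,\textswab{r}^2$, and when $\alpha=\beta=0$ the single transform is $\textswab{n}$ itself; one must verify these are nondegenerate, non-ruled surfaces before their invariants are even defined, and a ``compute generically, then specialise'' strategy is delicate there because $\hat\mu$ vanishes identically rather than at isolated points, so the generic formulae do not limit onto these branches in an obvious way. To complete the proof you would need to carry out and display the transformation formulae, including the degenerate branches, or else cite them from the classical sources on which the paper itself relies.
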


The following classical theorem lies at the heart of the geometric definition and analysis of discrete projective minimal surfaces. In order to formulate this theorem, we first note that since any point of a surface $\Sigma$ is mapped to a point of any envelope $\Omega$ via the corresponding Lie quadric, any coordinate system on the surface $\Sigma$ induces a coordinate system on the envelope $\Omega$. Hence, we say that a surface $\Sigma$ and any envelope $\Omega$ are in asymptotic correspondence if the asymptotic lines on $\Sigma$ are mapped to the asymptotic lines on $\Omega$. It turns out that the notion of asymptotic correspondence gives rise to a privileged class of surfaces. Thus, the definition proposed in \cite{mccarthy-schief2017} is adopted.

\begin{definition}
  A surface $\Sigma$ is termed a PMQ surface if it is in asymptotic correspondence with at least one associated envelope $\Omega$.
\end{definition}

The above-mentioned key theorem therefore reads as follows \cite{Bol1954,sasaki2005}.

\begin{theorem}
  The class of PMQ surfaces consists of projective minimal surfaces and Q surfaces.
\end{theorem}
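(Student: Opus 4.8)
The plan is to turn asymptotic correspondence into a differential condition on $\Sigma$ alone by tracking the contact point $\bomega$ of an envelope on the Lie quadric $Q$ in the Wilczynski frame, and then to read off from the Gauss--Mainardi--Codazzi equations \eqref{gmc} that this condition selects exactly the union of the projective minimal surfaces (those obeying \eqref{eulerlagrange}) and the Q surfaces. By Theorem \ref{envel} it suffices to analyse one envelope, say $\Sigma_{++}$, since the statements for $\Sigma_{+-},\Sigma_{-+},\Sigma_{--}$ follow by the substitutions $\hat{\nu}\to-\hat{\nu}$, $\hat{\mu}\to-\hat{\mu}$ and both. The first step is to record the linear equations obeyed by the Wilczynski frame $\{\textswab{r},\textswab{r}^1,\textswab{r}^2,\textswab{n}\}$; differentiating the defining expressions, eliminating the second derivatives of $\mbox{\boldmath$r$}$ via the projective Gauss--Weingarten equations and simplifying the $\textswab{r}$-components with \eqref{gmc} yields relations such as
\begin{align*}
\textswab{r}_x &=\textswab{r}^1+\tfrac{q_x}{2q}\textswab{r}, & \textswab{r}^1_x &=-\tfrac{q_x}{2q}\textswab{r}^1+p\,\textswab{r}^2+\hat{\nu}^2\textswab{r},\\
\textswab{r}^2_x &=\tfrac{q_x}{2q}\textswab{r}^2+\textswab{n}-\tfrac{A}{2p}\textswab{r}, & \textswab{n}_x &=-\tfrac{A}{2p}\textswab{r}^1+\hat{\nu}^2\textswab{r}^2-\tfrac{q_x}{2q}\textswab{n}+(\cdots)\textswab{r},
\end{align*}
together with their $x\leftrightarrow y$ images (which simultaneously interchange $p\leftrightarrow q$, $\alpha\leftrightarrow\beta$, $A\leftrightarrow B$); recall that $\hat{\mu}^2=\alpha/2p^2$ and $\hat{\nu}^2=\beta/2q^2$.

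Second, since $\bomega_{++}$ sits on $Q$ at the generator parameters $(\mu,\nu)=(\hat{\mu},\hat{\nu})$, the two generators of $Q$ through $\bomega_{++}$ have tangent directions $e_1=\textswab{r}^1+\hat{\nu}\textswab{r}$ and $e_2=\textswab{r}^2+\hat{\mu}\textswab{r}$, and the tangent plane $T=\mathrm{span}\{\bomega_{++},e_1,e_2\}$ of $Q$ at $\bomega_{++}$ is the kernel of the functional $\phi(\xi^1,\xi^2,\eta,\rho)=-\hat{\nu}\xi^1-\hat{\mu}\xi^2+\hat{\mu}\hat{\nu}\,\eta+\rho$ in frame coordinates. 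That $\Sigma_{++}$ is an envelope means $\partial_x\bomega_{++},\partial_y\bomega_{++}\in T$ --- itself a consequence of \eqref{gmc} --- so, modulo $\bomega_{++}$, one may write $\partial_x\bomega_{++}\equiv a\,e_1+b\,e_2$ and $\partial_y\bomega_{++}\equiv c\,e_1+d\,e_2$, and carrying out the differentiation with the structure equations and \eqref{gmc} produces
$$a=\hat{\mu}_x-\tfrac{A}{2p},\quad b=\hat{\mu}p+\tfrac{\beta_x}{4q^2\hat{\nu}},\quad c=\hat{\nu}q+\tfrac{\alpha_y}{4p^2\hat{\mu}},\quad d=\hat{\nu}_y-\tfrac{B}{2q},$$
the latter two arising from the former by the $x\leftrightarrow y$ symmetry. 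Now the line $y=\mathrm{const}$ of $\Sigma$ is carried to an asymptotic line of $\Sigma_{++}$ precisely when $\partial_x^2\bomega_{++}\in T_{\bomega_{++}}\Sigma_{++}=T$, i.e.\ when $\phi(\partial_x^2\bomega_{++})=0$; since $\phi$ annihilates $\bomega_{++}$, $e_1$, $e_2$ and $\partial_x\bomega_{++}$, this reduces to $a\,\phi(\partial_x e_1)+b\,\phi(\partial_x e_2)=0$, and a short computation gives $\phi(\partial_x e_2)=a$ and $\phi(\partial_x e_1)=\tfrac{\beta_x}{4q^2\hat{\nu}}-\hat{\mu}p$, so that $a\,\phi(\partial_x e_1)+b\,\phi(\partial_x e_2)=\tfrac{a\beta_x}{2q^2\hat{\nu}}$. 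Hence the $x$-lines correspond asymptotically iff $a\beta_x=0$, and by symmetry the $y$-lines iff $d\alpha_y=0$.

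The classification is then forced by \eqref{gmc}. Its first equation $\alpha_y/p=\beta_x/q$ shows $\alpha_y=0$ if and only if $\beta_x=0$. If $\alpha_y=\beta_x=0$ then $a\beta_x=0$ and $d\alpha_y=0$ hold for every one of the four envelopes, so $\Sigma$ is PMQ, and by Theorem \ref{elcond} it is projective minimal; in particular the degenerate strata $\alpha=0$ or $\beta=0$ (Godeaux--Rozet and Demoulin surfaces), on which the formula for $b$ or $c$ is singular, all fall here and need no separate treatment. If instead $\alpha_y,\beta_x\neq0$, then asymptotic correspondence with $\Sigma_{\varepsilon_1\varepsilon_2}$ forces $a=0$ and $d=0$, i.e.\ $\hat{\mu}_x=\varepsilon_1\tfrac{A}{2p}$ and $\hat{\nu}_y=\varepsilon_2\tfrac{B}{2q}$; since the four sign patterns $(\varepsilon_1,\varepsilon_2)$ are realised by the four Demoulin transforms, $\Sigma$ is PMQ iff $\hat{\mu}_x=\pm\tfrac{A}{2p}$ and $\hat{\nu}_y=\pm\tfrac{B}{2q}$, and I would finish by identifying these relations with the semi-Q equations defining Q surfaces (using the definition supplied in the next section). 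The converse inclusions are immediate: a projective minimal surface is PMQ by the first case, and a Q surface, satisfying $\hat{\mu}_x=\pm\tfrac{A}{2p}$ and $\hat{\nu}_y=\pm\tfrac{B}{2q}$, is in asymptotic correspondence with the corresponding Demoulin transform. This establishes $\{\text{PMQ surfaces}\}=\{\text{projective minimal surfaces}\}\cup\{\text{Q surfaces}\}$.

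The hardest part will be the pair of reductions in the second step: verifying that both asymptotic conditions collapse to the single scalars $a\beta_x$ and $d\alpha_y$ --- which uses \eqref{gmc} crucially, both to establish the envelope identity $\phi(\partial_x\bomega_{++})=0$ and to obtain the clean forms of $a,b,c,d$ and of $\phi(\partial_x e_i)$ --- followed by the sign bookkeeping in the last step, namely matching the four patterns $(\varepsilon_1,\varepsilon_2)$ with the four Demoulin transforms of Theorem \ref{envel} and with the defining equations of Q surfaces. One must also set aside the degenerate loci ($\hat{\mu}=0$, $\hat{\nu}=0$, or $ad-bc=0$, where $\Sigma_{++}$ fails to be a genuine immersed surface or the formulae above break down); by \eqref{gmc} these lie in the projective minimal class, so they do not affect the statement.
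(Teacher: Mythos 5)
The paper does not prove this theorem: it is quoted as a classical result with citations to Bol and Sasaki, so there is no in-paper argument to compare yours against. Judged on its own merits, your strategy is the standard classical one and its computational core checks out. I verified the frame relations you list against the projective Gauss--Weingarten equations and the definitions of $\alpha,\beta,A,B$, and confirmed the key identities $\phi(\partial_x e_2)=a=\hat{\mu}_x-\tfrac{A}{2p}$, $\phi(\partial_x e_1)=\tfrac{\beta_x}{4q^2\hat{\nu}}-\hat{\mu}p$ and $b=\hat{\mu}p+\tfrac{\beta_x}{4q^2\hat{\nu}}$, whence indeed $\phi(\partial_x^2\bomega_{++})=\tfrac{a\beta_x}{2q^2\hat{\nu}}$ (note your $\phi$ is only correct under the convention that $\eta$ is the $\textswab{n}$-coordinate and $\rho$ the $\textswab{r}$-coordinate; with the more natural reading of $(\xi^1,\xi^2,\eta,\rho)$ the coefficients of $\eta$ and $\rho$ must be swapped). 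The reduction of asymptotic correspondence to $a\beta_x=0$ and $d\alpha_y=0$, and the dichotomy forced by $\alpha_y/p=\beta_x/q$, are correct, and the degenerate strata $\alpha\equiv0$ or $\beta\equiv0$ are correctly absorbed into the projective minimal case.

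The genuine gap is the final step, which you explicitly defer: identifying the conditions $\hat{\mu}_x=\pm\tfrac{A}{2p}$, $\hat{\nu}_y=\pm\tfrac{B}{2q}$ with the Q-surface property. The paper's definition of a Q surface is geometric: the line congruences swept out by two connected edges of the Demoulin quadrilateral --- i.e.\ by the two generators $\nu\mapsto Q(\epsilon_1\hat{\mu},\nu)$ and $\mu\mapsto Q(\mu,\epsilon_2\hat{\nu})$ of $Q(x,y)$ through $\bomega_{\epsilon_1\epsilon_2}$ --- must each degenerate to a one-parameter family. Your argument needs the equivalence ``the congruence $(x,y)\mapsto\{Q(\epsilon_1\hat{\mu},\nu)\}$ degenerates if and only if $\hat{\mu}_x=\epsilon_1\tfrac{A}{2p}$'' together with its $y$-counterpart; this is a separate rank/degeneracy computation for a two-parameter family of lines in $\mathbb{P}^3$, of comparable length to the computation you did carry out, and without it neither inclusion involving Q surfaces is established. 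As it stands you have shown that a non-projective-minimal PMQ surface satisfies $\hat{\mu}_x=\pm\tfrac{A}{2p}$ and $\hat{\nu}_y=\pm\tfrac{B}{2q}$, but not that such a surface is a Q surface in the paper's sense, nor the converse that every Q surface is in asymptotic correspondence with the matching Demoulin transform.
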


The properties of Q surfaces and their relatives have been discussed in great detail in \cite{Bol1954}. Some of those which are pertinent to the discrete theory developed in the following sections are now briefly mentioned. Thus, in general, for any surface $\Sigma$, any extended edge of the Demoulin quadrilateral displayed in Figure \ref{demquad} such as $[\bomega_{++}(x,y),\bomega_{+-}(x,y)]$ generates a two-parameter family of lines (i.e., a line congruence) by varying the coordinates $(x,y)$ on $\Sigma$. However, this line congruence may degenerate to a one-parameter family of lines, in which case $\Sigma$ is referred to as a semi-Q surface. If this kind of degeneration is present with respect to two connected edges of the Demoulin quadrilateral such as the preceding one and $[\bomega_{++}(x,y),\bomega_{-+}(x,y)]$ then $\Sigma$ is termed a Q surface with respect to the envelope $\Sigma_{++}$. In fact, the latter turns out to be a quadric generated by the two one-parameter families of extended edges. If the line congruences associated with two ``opposite'' edges such as $[\bomega_{++}(x,y),\bomega_{+-}(x,y)]$ and $[\bomega_{--}(x,y),\bomega_{-+}(x,y)]$ degenerate then $\Sigma$ is known as a complex surface. If three or four line congruences degenerate then we refer to $\Sigma$ as doubly Q or doubly complex respectively. However, it is known that doubly Q surfaces are automatically doubly complex. It turns out that the same property holds in the discrete case.

\section{Lattice Lie quadrics}
\subsection{The frame of a discrete asymptotic net}
    \begin{definition}[\cite{bobenko2008}]
        A $\mathbb{Z}^2$ lattice of points in $\mathbb{P}^3$ whose stars are planar is termed a
        discrete asymptotic net.
    \end{definition}
Let $\mb{r}:\mathbb{Z}^2\rightarrow\mathbb{R}^4$ be a homogeneous
coordinate representation of a discrete asymptotic net in
$\mathbb{P}^3$. We usually suppress the arguments in $\mb{r}=\mb{r}(n_1,n_2)$ and denote an increment of $n_k$ by a subscript $k$ and a decrement of $n_k$ by a subscript $\bar{k}$, that is, $\mb{r}_{k}$ and $\mb{r}_{\bar{k}}$ respectively, as illustrated in Figure~\ref{figframe}.
\begin{figure}[h]
    \centering
    \includegraphics[scale=1]{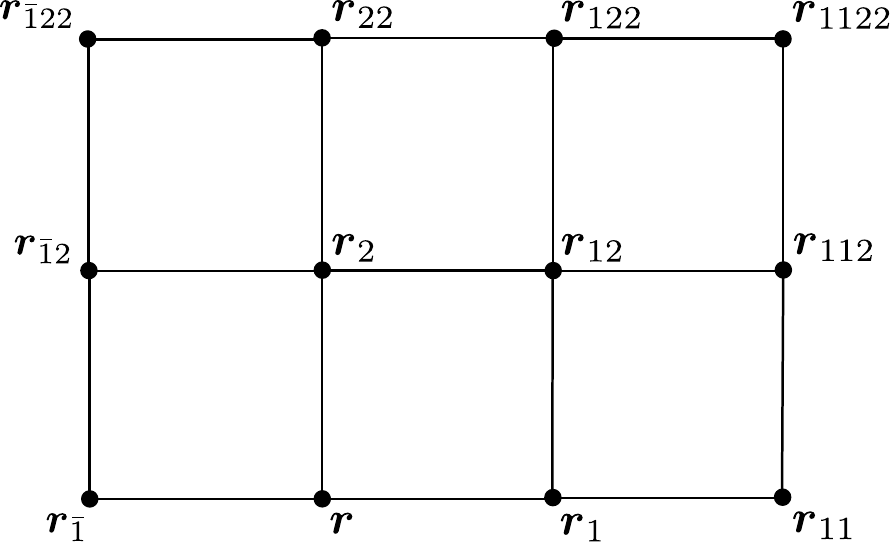}
    \caption{Labeling of a discrete asymptotic net}
    \label{figframe}
\end{figure}
\noindent
We introduce the frame
    \begin{equation}\label{eqfrm1}
        \bV=\left(
                    \begin{array}{l}
                        \mb{r}\\
                        \mb{r}_{1}\\
                        \mb{r}_{2}\\
                        \mb{r}_{12}
                    \end{array}
                \right)
    \end{equation}
so that the planar star condition on the lattice gives
rise to the frame equations
    \begin{align}
        \bV_{1}=&\left(
            \begin{array}{l}
                \mb{r}\\
                \mb{r}_{1}\\
                \mb{r}_{2}\\
                \mb{r}_{12}
            \end{array}
        \right)_{1}
        =\left(
            \begin{array}{cccc}
                0&1&0&0\\
                \alpha^0&\alpha^1&0&\alpha^3\\
                0&0&0&1\\
                0&\beta^1&\beta^2&\beta^3
            \end{array}
        \right)
        \left(
            \begin{array}{l}
                \mb{r}\\
                \mb{r}_{1}\\
                \mb{r}_{2}\\
                \mb{r}_{12}
            \end{array}
        \right)
        =L\bV \label{eqframe1}\\[2mm]
        \bV_{2}=&\left(
            \begin{array}{l}
                \mb{r}\\
                \mb{r}_{1}\\
                \mb{r}_{2}\\
                \mb{r}_{12}
            \end{array}
        \right)_{2}
        =\left(
            \begin{array}{cccc}
                0&0&1&0\\
                0&0&0&1\\
                \gamma^0&0&\gamma^2&\gamma^3\\
                0&\delta^1&\delta^2&\delta^3
            \end{array}
        \right)
        \left(
            \begin{array}{l}
                \mb{r}\\
                \mb{r}_{1}\\
                \mb{r}_{2}\\
                \mb{r}_{12}
            \end{array}
        \right)
        =M\bV, \label{eqframe2}
    \end{align}
where we note that in order to exclude the degenerate case of
three points connected by two edges being collinear, we demand that
only $\alpha^1,\beta^3,\gamma^2$ and $\delta^3$ be allowed
to be zero. $L$ and $M$ are not arbitrary but are constrained by the
compatibility condition $\bV_{12}=\bV_{21}$, that is
    \begin{equation}\label{eqcomp}
        M_{1}L=L_{2}M
    \end{equation}
    which equates to
    \begin{equation}\label{eqcomp1}
        \begin{split}
            &\left(
            \begin{array}{cccc}
                0&0&0&1\\[1mm]
                0&\beta^1&\beta^2&\beta^3\\[1mm]
                0&\beta^1\gamma^3_{1}+\gamma^0_{1}&\gamma^3_{1}\beta^2&\beta^3\gamma^3_{1}+\gamma^2_{1}\\[2mm]
                \delta^1_{1}\alpha^0&\alpha^1\delta^1_{1}+\beta^1\delta^3_{1}&\delta^3_{1}\beta^2&\alpha^3\delta^1_{1}+\beta^3\delta^3_{1}+\delta^2_{1}
            \end{array}
            \right)\\[2mm]
            =
            &\left(
            \begin{array}{cccc}
                0&0&0&1\\[1mm]
                0&\alpha^3_{2}\delta^1&\delta^2\alpha^3_{2}+\alpha^0_{2}&\delta^3\alpha^3_{2}+\alpha^1_{2}\\[2mm]
                0&\delta^1&\delta^2&\delta^3\\[1mm]
                \beta^2_{2}\gamma^0&\beta^3_{2}\delta^1&\gamma^2\beta^2_{2}+\delta^2\beta^3_{2}&\gamma^3\beta^2_{2}+\delta^3\beta^3_{2}+\beta^1_{2}
            \end{array}
            \right).
        \end{split}
    \end{equation}
These may be regarded as the ``Gauss-Mainardi-Codazzi equations'' in
discrete projective differential geometry.

\subsection{Lattice Lie quadrics}

For each quadrilateral of a discrete asymptotic net, there exists a
one-para\-meter family of quadrics passing through the edges of that
quadrilateral. Analogously to the continuous Lie quadric for
projective minimal and Q surfaces, these ``lattice quadrics'' play a central role in
defining discrete Demoulin transforms.
\begin{figure}[h]
    \centering
     \includegraphics[scale=1]{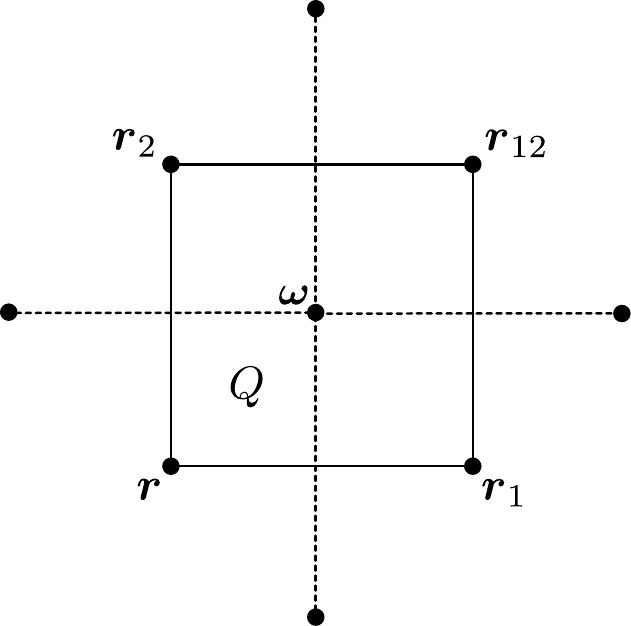}
    \caption{Combinatorial picture of a quadrilateral of a discrete asymptotic net and the star of an envelope touching an associated lattice Lie quadric}
    \label{touch}
\end{figure}
    \begin{definition}
       \quad
        \begin{enumerate}[label=(\roman*)]
            \item
            A quadric associated with a quadrilateral of a discrete asymptotic net
            is a quadric that passes through the edges of the
            quadrilateral. For any given quadrilateral, there
            exists a one parameter family of such quadrics.
            \item
            Let $Q$ and $\hat{Q}$ be two quadrics associated with neighbouring
            quadrilaterals of a discrete asymptotic net. $Q$ and $\hat{Q}$ are
            said to satisfy the $\mathcal{C}^1$ condition if their
            tangent planes coincide at each point of the common edge.
            \item
            A set of quadrics $\{Q\}$ associated with the quadrilaterals of a discrete asymptotic
            net is termed a set of lattice Lie quadrics if the $\mathcal{C}^1$
            condition holds for all neighbouring pairs of quadrics.
            \item
            A discrete envelope of a set of lattice Lie quadrics
            $\{Q\}$ is a dual lattice of $\mathbb{Z}^2$ combinatorics such that each star
            touches the corresponding lattice Lie quadric (cf.\ Figure \ref{touch}).
            \item
            A discrete PMQ surface is a discrete asymptotic net which
            admits a discrete envelope of an associated set of lattice Lie quadrics.
        \end{enumerate}
    \end{definition}
    We parametrise a quadric passing through the edges of the quadrilateral $[\mb{r}\,\,\mb{r}_{1}\,\,\mb{r}_{2}\,\,\mb{r}_{12}]$ by
            \begin{equation}\label{eqqparam}
                Q(s,t)=p\mb{r}_{12}+s \mb{r}_{1}+t \mb{r}_{2}+s t
                \mb{r},
            \end{equation}
        where $p$ is fixed and labels $Q$ among
        the one-parameter family of quadrics associated with the
        quadrilateral, and $s$ and $t$ are the parameters of $Q$ which
        parametrise its generators.

\begin{remark}
  The four (extended) edges of any quadrilateral of a discrete asymptotic net lying on the corresponding lattice Lie quadric may be regarded as a discrete analogue of second-order contact, while any planar star of a discrete envelope touching the corresponding lattice Lie quadric constitutes a discrete analogue of first-order contact.
\end{remark}

In order to establish the existence of lattice Lie quadrics, we first recall a key theorem established in \cite{huhnen-venedey2014} and prove it algebraically.

    \begin{theorem}\label{propc1}
        Given a fixed quadric $Q$, the $\mathcal{C}^1$ condition
        uniquely determines a quadric $Q_{1}$ associated with the neighbouring
        quadrilateral.
    \end{theorem}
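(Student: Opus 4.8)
The plan is to work directly with the explicit parametrisation \eqref{eqqparam}. Fix the quadric $Q$ associated with the quadrilateral $[\mb{r}\,\,\mb{r}_1\,\,\mb{r}_2\,\,\mb{r}_{12}]$, say $Q(s,t)=p\mb{r}_{12}+s\mb{r}_1+t\mb{r}_2+st\mb{r}$ with $p$ fixed. The neighbouring quadrilateral in the $1$-direction is $[\mb{r}_1\,\,\mb{r}_{11}\,\,\mb{r}_{12}\,\,\mb{r}_{112}]$, and a quadric $Q_1$ through its edges can be written in exactly analogous form, $Q_1(s,t)=p_1\mb{r}_{112}+s\mb{r}_{11}+t\mb{r}_{12}+st\mb{r}_1$, where the single free parameter is $p_1$. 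The common edge of the two quadrilaterals is the line $[\mb{r}_1\,\,\mb{r}_{12}]$, which is automatically a rectilinear generator of both $Q$ and $Q_1$ (it is an edge of each), so the two quadrics already agree along that edge; what must be arranged is that their tangent planes coincide at every point of it. The strategy is therefore: (i) compute the tangent plane of $Q$ along $[\mb{r}_1\,\,\mb{r}_{12}]$ as a one-parameter family of planes in $\mathbb{P}^3$; (ii) do the same for $Q_1$; (iii) show that equality of these two families of planes is equivalent to a single linear equation for $p_1$ in terms of $p$ and the frame coefficients, which has a unique solution.

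For step (i), note that along the generator $[\mb{r}_1\,\,\mb{r}_{12}]$ of $Q$ the tangent plane is spanned by that generator together with the generator from the \emph{other} ruling meeting it at the given point. Concretely, a point on $[\mb{r}_1\,\,\mb{r}_{12}]$ is $Q(s,t)$ for the appropriate $(s,t)$ locus; the transversal generator through it is the line obtained by moving in the complementary parameter. Using the frame equations \eqref{eqframe1}--\eqref{eqframe2}, one expresses $\mb{r}_{11}$, $\mb{r}_{112}$ (the vertices of the neighbouring face not already present) in terms of the basis $\{\mb{r},\mb{r}_1,\mb{r}_2,\mb{r}_{12}\}$ shifted appropriately; this is where the entries of $L$, $M$ and their shifts enter. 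The tangent plane to $Q$ at the point of $[\mb{r}_1\,\,\mb{r}_{12}]$ with affine coordinate $\lambda$ (say the point $\mb{r}_1+\lambda\mb{r}_{12}$, up to scale) is then a plane whose equation is linear in $\lambda$. Repeating for $Q_1$ gives another plane linear in $\lambda$, now with coefficients depending on $p_1$. Matching the two projective planes for all $\lambda$ yields, after clearing the common factor corresponding to the shared generator, one genuinely new scalar condition, and that condition is affine-linear in $p_1$ with nonzero leading coefficient (the nonvanishing being guaranteed by the nondegeneracy hypotheses on the frame coefficients, e.g. that the relevant ones among $\alpha^1,\beta^3,\gamma^2,\delta^3$ and the structural entries are controlled). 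Hence $p_1$ is uniquely determined, and $Q_1$ is the unique quadric with the required property.

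The main obstacle I expect is bookkeeping in step (ii)--(iii): correctly identifying which generator of $Q_1$ is the common edge and parametrising $Q_1$ so that the computation of its tangent plane along that edge is directly comparable to that of $Q$ — in particular getting the two affine parametrisations of the shared line to agree so that "equality for all $\lambda$" is a clean statement. There is also a small point of principle to check, namely that the $\mathcal{C}^1$ condition is not vacuous and not overdetermined: a priori matching two $\lambda$-linear planes is two scalar conditions, but one of them is satisfied identically because the shared edge is a generator of both quadrics with matching tangent direction there, leaving exactly one condition, hence exactly one solution for the single free parameter $p_1$. Once the linear equation for $p_1$ is exhibited explicitly in terms of $p$ and the coefficients of $L_{(\cdot)}$, $M_{(\cdot)}$, uniqueness and existence are immediate. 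I would present the computation in the Wilczynski-type basis adapted to the quadrilateral to keep the expressions as compact as possible, and remark that the analogous statement in the $2$-direction follows by the symmetry of the construction.
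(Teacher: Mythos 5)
Your proposal is correct and follows essentially the same route as the paper: parametrise $Q_1$ by the single scalar $p_1$, rewrite it in the frame $\{\mb{r},\mb{r}_1,\mb{r}_2,\mb{r}_{12}\}$ via the frame equations, and impose tangent-plane coincidence along the common edge $[\mb{r}_1,\mb{r}_{12}]$ as a determinant condition that is linear in $p_1$ with nonvanishing leading coefficient (proportional to $\beta^2 p$), yielding the unique solution $p_1=\alpha^0/(\beta^2 p)$. The only cosmetic difference is that the paper imposes the condition at a single generic point of the edge and then notes the answer is independent of that point; the reason your ``for all $\lambda$'' matching is not overdetermined is that the planar-star property forces the tangent planes of $Q$ and $Q_1$ to agree automatically at the two vertices $\mb{r}_1$ and $\mb{r}_{12}$, which is precisely the content of the paper's Theorem on coincidence of tangent planes along a common edge.
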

    \begin{proof}
        We parametrise $Q$ by \eqref{eqqparam} and $Q_{1}$ by
            \begin{equation}\label{eqq1param}
                Q_{1}(s_{1},t_{1})=p_{1}\mb{r}_{112}+s_{1}\mb{r}_{11}+t_{1}\mb{r}_{12}+s_{1}t_{1}\mb{r}_{1}
            \end{equation}
        so that, by virtue of \eqref{eqframe1}, \eqref{eqframe2},
            \begin{equation}\label{eqq11param}
                \begin{split}
                    Q_{1}=(\beta^3 p_{1}&+\alpha^3 s_{1}+t_{1})\mb{r}_{12}\\&+(\beta^1 p_{1}+\alpha^1 s_{1}+s_{1}
                    t_{1})\mb{r}_{1}+\beta^2 p_{1}\mb{r}_{2}+\alpha^0 s_{1}\mb{r}.
                \end{split}
            \end{equation}
        A point $\mb{X}$ on the edge common to $Q$ and $Q_{1}$ is
        parametrised by $ps_{1}=s$ and $t=0,\,t_{1}=\infty$. The $\mathcal{C}^1$ condition
        is then
            \begin{equation*}
                \left|
                \mb{X},\,\,\mb{r}_{12},\,\,\left.\frac{\partial}{\partial
                t}Q\right|_{\mb{X}},\,\,\left.\frac{\partial}{\partial
                \hat{t}_{1}}\hat{Q}_{1}\right|_{\mb{X}}\right|=0,
            \end{equation*}
        where,  to obtain a finite tangent vector, we have scaled $Q_{1}$ according to $\hat{Q}_{1}=\hat{t}_{1}Q_{1}$ with $\hat{t}_{1}=1/t_{1}$. This
        yields
            \begin{equation}\label{eqc11}
                \left|s_{1}\mb{r}_{1},\,\,\mb{r}_{12},\,\,\mb{r}_{2}+ps_{1}\mb{r},\,\,p_{1}\mb{r}_{112}+s_{1}\mb{r}_{11}\right|=0.
            \end{equation}
        On use of the system \eqref{eqframe1}, \eqref{eqframe2}, this is shown to determine $p_1$ according to
            \begin{equation}\label{eqp11}
                p_{1}=\frac{\alpha^0}{\beta^2 p}.
            \end{equation}
    \end{proof}
    \begin{remark}
        We note that Theorem \ref{propc1} makes use of the fact
        that $Q$ and $Q_{1}$ share tangents plane at the
        points $\mb{r}_{1}$ and $\mb{r}_{12}$. Moreover,
        since \eqref{eqp11} is independent of $s$, it follows that
        the tangent planes of $Q$ and $Q_{1}$ coincide everywhere
        along the common edge, confirming the following known
        result.
    \end{remark}
        \begin{theorem}[\cite{huhnen-venedey2014}]\label{thm3pts}
            Let $Q$ and $Q_{1}$ be quadrics associated with two neighbouring quadrilaterals of a
            discrete asymptotic net. If the tangent planes of $Q$
            and $Q_{1}$ coincide at a point on their common
            edge other than any of the vertices connected by the common edge then the tangent planes to both quadrics coincide everywhere along the common
            edge.
        \end{theorem}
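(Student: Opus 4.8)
The plan is to recognise the statement as an instance of the elementary principle that a projective isomorphism between two copies of $\mathbb{P}^1$ is uniquely determined by its effect on three distinct points. Fix the common edge $\ell=[\mb{r}_{1}\,\,\mb{r}_{12}]$ of the two neighbouring quadrilaterals and parametrise it projectively by $\mb{X}(s)=p\mb{r}_{12}+s\mb{r}_{1}$, so that $\mb{r}_{12}$ and $\mb{r}_{1}$ correspond to $s=0$ and $s=\infty$ respectively (cf.\ the proof of Theorem~\ref{propc1}). Since $\ell$ is a rectilinear generator of $Q$, the tangent plane of $Q$ at $\mb{X}(s)$ contains $\ell$ for every $s$, so the assignment $s\mapsto T_{\mb{X}(s)}Q$ defines a map $\phi_{Q}$ from $\ell\cong\mathbb{P}^1$ into the pencil of planes through $\ell$, which is again a $\mathbb{P}^1$; the analogous construction for $Q_{1}$ yields a second such map $\phi_{Q_{1}}$. (Conceptually, $\phi_{Q}$ being a projectivity reflects the classical fact that the polar correlation of a quadric restricts on a generator to a projective map onto the pencil of planes through that generator.) The theorem asserts precisely that if $\phi_{Q}$ and $\phi_{Q_{1}}$ agree at one point of $\ell$ different from $\mb{r}_{1}$ and $\mb{r}_{12}$ then they agree everywhere; hence it suffices to show (a) that $\phi_{Q}$ and $\phi_{Q_{1}}$ are projective isomorphisms, in particular not constant, and (b) that they automatically coincide at the two vertices $\mb{r}_{1}$ and $\mb{r}_{12}$.

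Both points are settled by short computations built on the frame equations \eqref{eqframe1}, \eqref{eqframe2} and the parametrisations \eqref{eqqparam}, \eqref{eqq11param} already employed in the proof of Theorem~\ref{propc1}. Differentiating \eqref{eqqparam} along $\ell$ (that is, at $t=0$) gives $T_{\mb{X}(s)}Q=\mathrm{span}(\mb{r}_{1},\mb{r}_{12},\mb{r}_{2}+s\mb{r})$, so in the coordinate $[\lambda:\mu]$ on the pencil $\mathrm{span}(\mb{r}_{1},\mb{r}_{12},\lambda\mb{r}_{2}+\mu\mb{r})$ one obtains $\phi_{Q}:s\mapsto[1:s]$, which is a projective isomorphism because $\mb{r}_{2}$ and $\mb{r}$ are linearly independent modulo $\ell$ ($\{\mb{r},\mb{r}_{1},\mb{r}_{2},\mb{r}_{12}\}$ being a frame). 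For $Q_{1}$, expanding \eqref{eqq1param} by means of \eqref{eqframe1} (equivalently, starting from \eqref{eqq11param}) and reading off the tangent plane along the generator $t_{1}=\infty$ yields, at the point with parameter $s_{1}=s/p$, the plane $\mathrm{span}(\mb{r}_{1},\mb{r}_{12},p_{1}\beta^{2}\mb{r}_{2}+s_{1}\alpha^{0}\mb{r})$, so $\phi_{Q_{1}}:s_{1}\mapsto[p_{1}\beta^{2}:s_{1}\alpha^{0}]$, again a projective isomorphism since $\alpha^{0}$ and $\beta^{2}$ are among the coefficients not permitted to vanish. Evaluating these expressions at $s=0$ (the point $\mb{r}_{12}$) gives the common plane $\mathrm{span}(\mb{r}_{1},\mb{r}_{12},\mb{r}_{2})$, and at $s=\infty$ (the point $\mb{r}_{1}$) the common plane $\mathrm{span}(\mb{r}_{1},\mb{r}_{12},\mb{r})$; these coincidences are nothing but the planar star conditions at $\mb{r}_{12}$ and $\mb{r}_{1}$, encoded in the frame relations $\mb{r}_{112}=\beta^{1}\mb{r}_{1}+\beta^{2}\mb{r}_{2}+\beta^{3}\mb{r}_{12}$ and $\mb{r}_{11}=\alpha^{0}\mb{r}+\alpha^{1}\mb{r}_{1}+\alpha^{3}\mb{r}_{12}$. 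Hence $\phi_{Q}$ and $\phi_{Q_{1}}$ agree at the two vertices on $\ell$.

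It then remains only to invoke the principle stated at the outset: two projective isomorphisms $\mathbb{P}^1\to\mathbb{P}^1$ agreeing at three distinct points are identical. Since $\phi_{Q}$ and $\phi_{Q_{1}}$ already coincide at $\mb{r}_{1}$ and $\mb{r}_{12}$, agreement at one further point of $\ell$ forces $\phi_{Q}\equiv\phi_{Q_{1}}$, i.e.\ the tangent planes of $Q$ and $Q_{1}$ coincide at every point of the common edge, as claimed. As a by-product, equating $\phi_{Q}$ and $\phi_{Q_{1}}$ identically gives $s\,p_{1}\beta^{2}=\tfrac{s}{p}\alpha^{0}$, hence $p_{1}=\alpha^{0}/(p\beta^{2})$, recovering \eqref{eqp11}. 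The only genuinely delicate point is (a): one must ensure the two tangent-plane maps are honest non-degenerate projectivities rather than constant maps, which is exactly where the standing non-degeneracy assumptions on the coefficients of $L$ and $M$ enter; the remaining work — bookkeeping of which edges and vertices the two quadrilaterals share, plus the two derivative computations — is routine.
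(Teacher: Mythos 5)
Your proof is correct, and it takes a slightly different logical route from the paper's. The paper disposes of this theorem in the remark preceding it: the proof of Theorem \ref{propc1} computes the condition for the tangent planes of $Q$ and $Q_1$ to coincide at the point of the common edge labelled by $s$ (with $s\neq 0,\infty$) and finds that it collapses to $p_1=\alpha^0/(\beta^2 p)$, an equation in which $s$ does not appear; coincidence at one interior point therefore forces \eqref{eqp11}, which in turn forces coincidence at every point of the edge. You use the same two tangent-plane computations along the edge, but replace the ``independence of $s$'' observation by the classical fact that a projectivity of $\mathbb{P}^1$ is determined by its values at three points: you exhibit $s\mapsto T_{\mb{X}(s)}Q$ and $s\mapsto T_{\mb{X}(s)}Q_1$ as projectivities from the edge onto the pencil of planes through it, verify that they automatically agree at the two vertices (this is exactly the planar star condition encoded in the frame relations for $\mb{r}_{11}$ and $\mb{r}_{112}$), and conclude from agreement at one further point. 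Both arguments rest on the same non-degeneracy input ($\alpha^0,\beta^2\neq 0$ from the frame assumptions, together with $p,p_1\neq 0$ so that the quadrics and hence the two pencil maps are non-degenerate), and your version has the small additional merit of explaining why the vertices must be excluded from the hypothesis: the two maps coincide there for free, so tangency at a vertex carries no information about $p_1$. Your concluding identification $p_1=\alpha^0/(p\beta^2)$ is consistent with \eqref{eqp11}, so the two proofs are fully compatible.
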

    \begin{remark}
        Let $Q_{2}$ be the quadric passing through the edges of the
        quadrilateral
        $[\mb{r}_{2},\,\mb{r}_{12},\,\mb{r}_{22},\,\mb{r}_{122}]$
        which satisfies the $\mathcal{C}^1$ condition with $Q$. Let
        $Q_{12}$ and $Q_{21}$ be the quadrics passing through the edges of the
        quadrilateral
        $[\mb{r}_{12},\,\mb{r}_{112},\,\mb{r}_{122},\,\mb{r}_{1122}]$
        which satisfies the $\mathcal{C}^1$ condition with $Q_{1}$ and $Q_{2}$
        respectively (cf.\ Figure \ref{quadsonpatch}). Then, expressions similar to
        \eqref{eqp11} show that the parameters $p_{12}$ and $p_{21}$ associated with $Q_{12}$ and $Q_{21}$ respectively coincide.
        Thus, $Q_{12}=Q_{21}$ and, hence, given one fixed quadric associated
        with one quadrilateral of a discrete asymptotic net, the
        $\mathcal{C}^1$ condition uniquely determines all other
        quadrics on the lattice \cite{huhnen-venedey2014}. The implication of this is summarised below.
    \end{remark}
    \begin{figure}[h]
        \centering
        \includegraphics[scale=1]{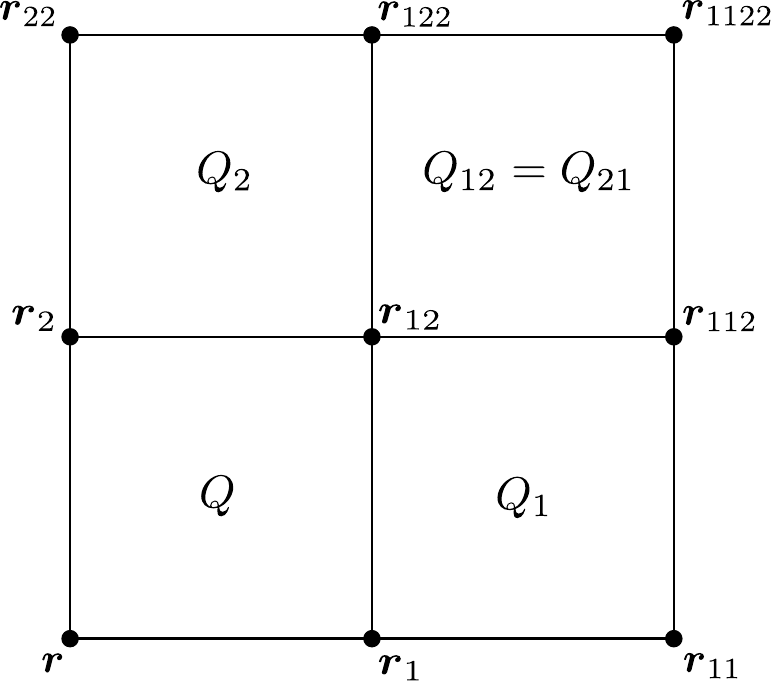}
        \caption{Quadrics on a patch of a discrete asymptotic net related by the $\mathcal{C}^1$ condition}
        \label{quadsonpatch}
    \end{figure}

\begin{theorem}
  A set of lattice Lie quadrics is uniquely determined by a quadric associated with one quadrilateral of a discrete asymptotic net.
\end{theorem}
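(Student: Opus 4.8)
The plan is to bootstrap from the local statements already proved, namely Theorem~\ref{propc1} and the two remarks preceding the claim, to a global existence-and-uniqueness statement on the whole lattice $\mathbb{Z}^2$. Fix one quadrilateral, say $[\mb{r}\,\,\mb{r}_1\,\,\mb{r}_2\,\,\mb{r}_{12}]$, and one quadric $Q$ associated with it (that is, a choice of the parameter $p$ in \eqref{eqqparam}). I want to show that there is exactly one assignment of a quadric $Q(n_1,n_2)$ to every quadrilateral such that $Q(0,0)=Q$ and the $\mathcal{C}^1$ condition holds across every interior edge. The natural way to do this is to propagate $Q$ outward quadrilateral by quadrilateral using Theorem~\ref{propc1}, which tells us that the $\mathcal{C}^1$ condition with a given neighbour \emph{uniquely} determines the adjacent quadric (via the explicit formula \eqref{eqp11} for the parameter, $p_1 = \alpha^0/(\beta^2 p)$, and its analogue in the $2$-direction).

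First I would make precise the propagation. Along the $n_1$-axis and the $n_2$-axis the quadric is uniquely forced at every step by repeated application of \eqref{eqp11} and its $2$-direction counterpart; call the resulting parameters $p(n_1,0)$ and $p(0,n_2)$. For a general quadrilateral at $(n_1,n_2)$ with $n_1,n_2>0$ one can reach it from $Q$ by a monotone lattice path, and at each step the quadric is determined; the only issue is \emph{path-independence}, i.e.\ well-definedness. This is exactly the content of the second remark before the theorem: with $Q_{12}$ obtained by going first in direction $1$ then direction $2$, and $Q_{21}$ obtained in the opposite order, the two resulting parameters $p_{12}$ and $p_{21}$ coincide, so $Q_{12}=Q_{21}$. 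Since any two monotone paths between two lattice points differ by a finite sequence of such elementary square-swaps, path-independence on the first quadrant follows by induction on, say, the number of ``corners'' in the path (or on the enclosed area). The same argument, using $p_{\bar 1}=\alpha^0/(\beta^2 p)$ solved backwards and the corresponding commuting-square identity in each of the four corner-configurations, extends the construction to all four quadrants, hence to all of $\mathbb{Z}^2$.

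It then remains to check that the globally defined family $\{Q(n_1,n_2)\}$ genuinely satisfies the $\mathcal{C}^1$ condition on \emph{every} edge, not merely on the edges used in the propagation. For a vertical edge between $(n_1,n_2)$ and $(n_1+1,n_2)$: by construction $Q(n_1+1,n_2)$ was obtained from one of its neighbours by a $\mathcal{C}^1$ step, but possibly not from $Q(n_1,n_2)$ directly. However, one can always route the propagation so that this particular pair \emph{is} related by a direct $\mathcal{C}^1$ step — or, more cleanly, invoke the commuting-square remark once more: the pair $\big(Q(n_1,n_2),\,Q(n_1+1,n_2)\big)$ appears as an edge of a unit square whose other three quadrics are already mutually $\mathcal{C}^1$-compatible, and the explicit parameter formulas force the fourth compatibility. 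Finally, by Theorem~\ref{thm3pts} the coincidence of tangent planes at the three relevant points of each common edge already implies coincidence everywhere along that edge, so no extra verification is needed there. Uniqueness is immediate: any set of lattice Lie quadrics containing $Q$ must, by Theorem~\ref{propc1}, agree with the propagated family at every step, hence everywhere.

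The main obstacle I anticipate is the bookkeeping in the path-independence / global-consistency step: one has to be careful that the elementary commuting-square identity (the equality $p_{12}=p_{21}$) holds in \emph{all} orientations — not just the ``forward-forward'' configuration of Figure~\ref{quadsonpatch} but also the mixed forward/backward ones needed to cross into the other three quadrants — and this rests on the compatibility equations \eqref{eqcomp1} for the frame coefficients $\alpha^i,\beta^i,\gamma^i,\delta^i$. Verifying that \eqref{eqp11}-type formulas composed around a square close up is a short but slightly delicate computation with the entries of $L$ and $M$; everything else is a routine induction once that lemma is in hand.
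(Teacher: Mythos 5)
Your proposal is correct and follows essentially the same route as the paper: the paper establishes this theorem via the remark immediately preceding it, namely unique propagation by the $\mathcal{C}^1$ condition (Theorem~\ref{propc1}, formula \eqref{eqp11}) together with the commuting-square identity $p_{12}=p_{21}$, which is exactly your path-independence lemma. You simply spell out in more detail the induction over lattice paths and the extension to all four quadrants, which the paper leaves implicit.
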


\subsection{Generators shared by lattice Lie quadrics}

In view of classifying discrete PMQ surfaces and their envelopes, we 
now investigate the properties of quadrics associated with the
quadrilaterals of a discrete asymptotic net.
    \begin{theorem}\label{thmsharegen1}
        Let $Q$ and $Q_{1}$ be two neighbouring quadrics on a
        discrete asymptotic net which satisfy the $\mathcal{C}^1$
        condition. Then, $Q$ and $Q_{1}$ share two (possibly
        complex or coinciding) generators (transversal to the common edge). 
       Conversely, if there exists a generator common to $Q$ and $Q_{1}$ then the
        $\mathcal{C}^1$ condition is satisfied.
    \end{theorem}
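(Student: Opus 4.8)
The plan is to work directly with the explicit parametrisations of $Q$ and $Q_1$ established in the proof of Theorem~\ref{propc1}. Recall from \eqref{eqqparam} that $Q(s,t)=p\mb{r}_{12}+s\mb{r}_{1}+t\mb{r}_{2}+st\mb{r}$, so that the generators of $Q$ transversal to the common edge $(\mb{r}_1,\mb{r}_{12})$ are the lines obtained by fixing $t$ and letting $s$ vary; each such line meets the common edge at $t=0$, i.e.\ at the point $p\mb{r}_{12}+s\mb{r}_1$. Similarly, from \eqref{eqq11param}, the generators of $Q_1$ transversal to the common edge are obtained by fixing $s_1$ and varying $t_1$; the generator for a given $s_1$ meets the common edge at the point $(\beta^3 p_1+\alpha^3 s_1)\mb{r}_{12}+(\beta^1 p_1+\alpha^1 s_1)\mb{r}_1+\beta^2 p_1\mb{r}_2+\alpha^0 s_1\mb{r}$, which lies on the line $(\mb{r}_1,\mb{r}_{12})$ precisely when its $\mb{r}_2$ and $\mb{r}$ components vanish, i.e.\ when $p_1=0$ (excluded) or $s_1=0$. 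So a naive count is misleading; the correct approach is to parametrise the \emph{shared} generators by their direction within each quadric.

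First I would set up the condition for a line $\ell$ to be a common generator of $Q$ and $Q_1$. A generator of $Q$ transversal to the common edge is spanned by two points of $Q$ with the same $t$-value but different $s$-values; subtracting, its direction contains $\mb{r}_1+t\mb{r}$ (from the $s$-derivative of $Q$), and a point on it is $p\mb{r}_{12}+t\mb{r}_2$. Thus the generator is $\mathrm{span}\{p\mb{r}_{12}+t\mb{r}_2,\ \mb{r}_1+t\mb{r}\}$. Using \eqref{eqframe1} to re-express $\mb{r}_{11},\mb{r}_{112}$ and hence the points of $Q_1$ in the basis $\{\mb{r},\mb{r}_1,\mb{r}_2,\mb{r}_{12}\}$ (this is already essentially done in passing from \eqref{eqq1param} to \eqref{eqq11param}), I would likewise write a generator of $Q_1$ transversal to the common edge as the span of two vectors in this fixed basis, parametrised by $s_1$ (or an affine parameter thereof). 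Requiring that the two 2-planes in $\mathbb{R}^4$ coincide gives, after eliminating the overall scalings, a system of polynomial equations relating $t$ and $s_1$; generically this reduces to a single quadratic in $t$, whose two roots (counted with multiplicity, possibly complex) are exactly the two claimed shared generators. The use of \eqref{eqp11}, $p_1=\alpha^0/(\beta^2 p)$, will be essential to make the coincidence of the 2-planes consistent — indeed this is the algebraic shadow of the $\mathcal{C}^1$ condition.

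For the converse, I would argue as follows: suppose $Q$ and $Q_1$ share a generator $\ell$ transversal to the common edge. Then $\ell$ meets the common edge $(\mb{r}_1,\mb{r}_{12})$ in a point $\mb{X}$ which is not a vertex (since $\ell$ is transversal, $\mb{X}\neq\mb{r}_1,\mb{r}_{12}$). At $\mb{X}$ the tangent plane to $Q$ is spanned by $\ell$ and the other generator of $Q$ through $\mb{X}$, namely the common edge itself; the same description holds for $Q_1$. Hence the tangent planes of $Q$ and $Q_1$ agree at $\mb{X}$, and Theorem~\ref{thm3pts} then forces them to agree along the entire common edge, which is the $\mathcal{C}^1$ condition. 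The main obstacle I anticipate is the bookkeeping in the direct part: one must choose the parametrisations of the generators of $Q$ and $Q_1$ so that the "same generator" condition becomes a tractable polynomial system rather than a tangle of projective normalisations, and one must correctly handle the degenerate cases in which one of $\alpha^1,\beta^3$ vanishes (permitted by the standing assumptions after \eqref{eqframe2}), which is where the "possibly coinciding or complex" caveat in the statement comes from. Once the quadratic in $t$ is isolated, reading off its discriminant and relating its vanishing to tangency of $Q$ and $Q_1$ along a \emph{double} generator should be routine.
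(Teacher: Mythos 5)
Your overall strategy --- express both quadrics in the frame $\{\mb{r},\mb{r}_1,\mb{r}_2,\mb{r}_{12}\}$ via \eqref{eqqparam} and \eqref{eqq11param}, isolate a quadratic whose discriminant governs the two shared generators, and prove the converse by observing that a shared generator and the common edge span coinciding tangent planes at their point of intersection so that Theorem~\ref{thm3pts} applies --- is essentially the route the paper takes: the paper equates the points $Q(s,t)\sim Q_1(s_1,t_1)$ componentwise, which is the same computation as your coincidence of $2$-planes, and arrives at the quadratic \eqref{eqsharegen} in $s$ with discriminant $D^1$.

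However, there is a concrete error in your identification of the rulings of $Q$. In the parametrisation $Q(s,t)=p\mb{r}_{12}+s\mb{r}_1+t\mb{r}_2+st\mb{r}$, the common edge $[\mb{r}_1,\mb{r}_{12}]$ is itself the generator $t=0$ of the constant-$t$ family, so the constant-$t$ generators --- your $\mathrm{span}\{p\mb{r}_{12}+t\mb{r}_2,\ \mb{r}_1+t\mb{r}\}$ --- belong to the \emph{same} ruling as the common edge and are disjoint from it for $t\neq0$. The generators transversal to the common edge are the constant-$s$ ones, $\mathrm{span}\{p\mb{r}_{12}+s\mb{r}_1,\ \mb{r}_2+s\mb{r}\}$, each meeting the edge at $t=0$. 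With your pairing the proposed system is vacuous: a constant-$s_1$ generator of $Q_1$ contains the direction $\mb{r}_{12}+s_1\mb{r}_1$, which lies in $\mathrm{span}\{p\mb{r}_{12}+t\mb{r}_2,\ \mb{r}_1+t\mb{r}\}$ only when $t=0$, so you would recover nothing beyond the common edge and never reach the advertised quadratic. Replacing the constant-$t$ family of $Q$ by its constant-$s$ family repairs everything: coincidence of the directions forces $ps_1=s$, the $\mb{r}_2$ and $\mb{r}$ components force $p_1=\alpha^0/(\beta^2p)$ (which is the $\mathcal{C}^1$ condition and also gives the cleanest version of your converse), and the remaining two components yield precisely $\alpha^3\beta^2s^2+s(\alpha^0\beta^3-\alpha^1\beta^2p)-\alpha^0\beta^1p=0$, whose discriminant is $D^1$. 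Two smaller points: the ``complex or coinciding'' caveat has nothing to do with $\alpha^1$ or $\beta^3$ vanishing --- it is simply the sign of $D^1$, which can be negative or zero for generic coefficients; and in the converse your claim that the intersection point $\mb{X}$ is not a vertex does not follow from transversality alone, but rather from the nondegeneracy assumptions $\alpha^0,\beta^1,\beta^2,\alpha^3\neq0$, which exclude $s=0$ and $s=\infty$ as roots.
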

    \begin{proof}
        At points common to $Q$ and $Q_{1}$, we have  $Q(s,t)\sim Q_{1}(s_1,t_1)$, which
        yields the equations
            \begin{align}
                \frac{s}{p}&=\frac{\beta^1 p_{1}+\alpha^1
                s_{1}+s_{1} t_{1}}{\beta^3 p_{1}+\alpha^3
                s_{1}+t_{1}}\label{eqsgen1}\\
                \frac{t}{p}&=\frac{\beta^2 p_{1}}{\beta^3 p_{1}+\alpha^3
                s_{1}+t_{1}}\label{eqsgen2}\\
                \frac{st}{p}&=\frac{\alpha^0 s_{1}}{\beta^3 p_{1}+\alpha^3
                s_{1}+t_{1}}.\label{eqsgen3}
            \end{align}
        One solution to these equations is $t=0,\,t_{1}=\infty$ and $ps_{1}=s$, which parametrises the common
        edge. Hence, if $Q$ and $Q_{1}$ have a common
        generator then, necessarily, the corresponding parameters $s$ and $s_{1}$ are related by $ps_{1}=s$. As a result, relations \eqref{eqsgen2} and \eqref{eqsgen3}
        imply that $p_{1}=\alpha^0/(\beta^2p)$, which is precisely the $\mathcal{C}^1$ condition
        \eqref{eqp11}. This is geometrically evident since at the point of intersection of the common edge and generator, the generator and edge span the coinciding tangent
        planes
        of $Q$ and $Q_{1}$ so that, as a result of Theorem \ref{thm3pts}, the tangent
        planes to $Q$ and $Q_{1}$ coincide along their common
        edge.
        Hence, if $Q$ and $Q_{1}$ have a shared generator then they satisfy the $\mathcal{C}^1$ condition. 

Conversely, since any common generator is ``labelled by $ps_{1}=s$'', that is, the labels $s$ and $s_1$ of a generator common to $Q$ and $Q_1$ are related by $ps_1=s$, the $\mathcal{C}^1$ condition \eqref{eqp11} implies that \eqref{eqsgen2} and \eqref{eqsgen3} coincide, thereby relating the parameters $t$ and $t_1$ but leaving one of them arbitrary, and the remaining relation \eqref{eqsgen1} reduces to the condition

            \begin{equation}\label{eqsharegen}
                \alpha^3\beta^2(s)^2+s(\alpha^0\beta^3-\alpha^1\beta^2
                p)-\alpha^0\beta^1p=0.
            \end{equation}
            The discriminant of the latter is
                    \begin{equation}
                        D^1=(\alpha^0\beta^3-\alpha^1\beta^2
                        p)^2+4\alpha^0\alpha^3\beta^1\beta^2 p
                    \end{equation}
and determines whether the roots of \eqref{eqsharegen} are real, complex conjugates or coinciding. Accordingly, the proof is complete and the following corollary holds.
\end{proof}

 \begin{corollary}\label{corsharegen1}
        Let $Q$ and $Q_{1}$ be two neighbouring quadrics on a
        discrete asymptotic net which satisfy the $\mathcal{C}^1$
        condition and 
    \begin{equation}\label{eqdisc}
                        D^1=(\alpha^0\beta^3-\alpha^1\beta^2
                        p)^2+4\alpha^0\alpha^3\beta^1\beta^2 p
     \end{equation}
be the associated discriminant. If
                    \begin{enumerate}[label=(\roman*)]
                        \item
                        $D^1>0$ then  $Q$ and $Q_{1}$ share two distinct
                        real generators.
                        \item
                        $D^1<0$ then $Q$ and $Q_{1}$ share two
                        distinct complex conjugate generators.
                        \item
                        $D^1=0$ then $Q$ and $Q_{1}$ share two coinciding real 
                        generators.
                    \end{enumerate}
\end{corollary}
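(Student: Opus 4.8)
The plan is to extract the corollary directly from the concluding computation in the proof of Theorem~\ref{thmsharegen1}. There it is shown that, once the $\mathcal{C}^1$ condition \eqref{eqp11} holds, a generator common to $Q$ and $Q_1$ other than their common edge is labelled on $Q$ by a value of the parameter $s$ satisfying the quadratic \eqref{eqsharegen},
\[
\alpha^3\beta^2\, s^2 + s(\alpha^0\beta^3 - \alpha^1\beta^2 p) - \alpha^0\beta^1 p = 0,
\]
the companion label $s_1$ on $Q_1$ being fixed by $ps_1 = s$ and the remaining parameters $t$, $t_1$ merely tracing out the shared line. Thus the generators common to $Q$ and $Q_1$ that are transversal to the common edge are in one-to-one correspondence with the roots of \eqref{eqsharegen}, and it remains only to analyse those roots.

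First I would note that \eqref{eqsharegen} is a genuine quadratic with real coefficients: the non-degeneracy conventions adopted after the frame equations \eqref{eqframe2} permit only $\alpha^1$, $\beta^3$, $\gamma^2$, $\delta^3$ to vanish, so in particular $\alpha^3\beta^2 \neq 0$. Hence \eqref{eqsharegen} has exactly two roots in $\mathbb{C}$, counted with multiplicity, and its discriminant is precisely the quantity $D^1$ recorded in \eqref{eqdisc}. Since distinct values of $s$ label distinct generators in the ruling of $Q$ transversal to the common edge, distinct roots of \eqref{eqsharegen} correspond to distinct shared generators.

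The three cases of the corollary now follow from the elementary theory of real quadratics. If $D^1 > 0$, the two roots are real and distinct, so $Q$ and $Q_1$ share two distinct real generators; if $D^1 < 0$, the roots form a complex conjugate pair, so they share two distinct complex conjugate generators; and if $D^1 = 0$, there is a single real root of multiplicity two, which we record as two coinciding real generators. This exhausts the possibilities.

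Since the substantive work has already been done in Theorem~\ref{thmsharegen1}, there is no genuine obstacle. The only points deserving a brief remark are the non-degeneracy input that keeps \eqref{eqsharegen} quadratic (without it one would have to treat a degenerate linear case separately) and the observation that the parametrisation assigning to $s$ its generator is injective on the relevant ruling, so that the algebraic multiplicity of a root matches the geometric count of shared generators.
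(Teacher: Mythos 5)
Your proposal is correct and follows essentially the same route as the paper: the paper folds this corollary into the end of the proof of Theorem~\ref{thmsharegen1}, where the shared generators transversal to the common edge are identified with the roots of the quadratic \eqref{eqsharegen} whose discriminant is exactly $D^1$, and the three cases are read off from the sign of $D^1$. Your added remarks on $\alpha^3\beta^2\neq 0$ (so the equation is genuinely quadratic) and on the injectivity of the labelling $s\mapsto$ generator are correct and make explicit what the paper leaves implicit.
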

\smallskip

    \begin{remark}
        Similarly, in the $n_2$ direction, $Q$ and $Q_2$ share two
        generators labelled by $pt_2=t$ and
            \begin{equation}\label{n2constraint}
                \gamma^3\delta^1 (t)^2+t(\gamma^0\delta^3-\gamma^2\delta^1
                p)-\gamma^0\delta^2p=0
            \end{equation}
        with discriminant
            \begin{equation}\label{eqd2}
             D^2=(\gamma^0\delta^3 -\gamma^2\delta^1
                p)^2+4\gamma^0\gamma^3\delta^1\delta^2
                p.
            \end{equation}
    \end{remark}
    In the case $D^1=0$ (or $D^2=0$), we also have the following useful property.
    \begin{corollary}\label{cortouch}
        Let $Q$ and $Q_{1}$ be two neighbouring quadrics on a
        discrete asymptotic net which satisfy the $\mathcal{C}^1$
        condition and assume that $D^1=0$. Then, $Q$ and $Q_{1}$
        touch along their shared generator.
    \end{corollary}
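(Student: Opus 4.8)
The plan is to argue directly from the explicit parametrisations already developed. Since $Q$ and $Q_{1}$ satisfy the $\mathcal{C}^1$ condition, $p_{1}=\alpha^0/(\beta^2p)$ by \eqref{eqp11}, and since $D^1$ is precisely the discriminant of the quadratic \eqref{eqsharegen}, the hypothesis $D^1=0$ says that \eqref{eqsharegen} has the double root
\[
  s_{0}=-\frac{\alpha^0\beta^3-\alpha^1\beta^2p}{2\alpha^3\beta^2}.
\]
Thus $Q$ and $Q_{1}$ share the single generator $g$ cut out by $s=s_{0}$ on $Q$ and, correspondingly, by $s_{1}=s_{0}/p$ on $Q_{1}$, and what must be shown is that the tangent planes of $Q$ and $Q_{1}$ agree at every point of $g$.

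First I would record the identification of the two parametrisations along $g$. Expressing $\mb{r}_{11}$ and $\mb{r}_{112}$ in the basis $\{\mb{r},\mb{r}_{1},\mb{r}_{2},\mb{r}_{12}\}$ via \eqref{eqframe1} and imposing $Q_{1}(s_{0}/p,t_{1})\sim Q(s_{0},t)$, the $\mb{r}$- and $\mb{r}_{2}$-components reproduce the $\mathcal{C}^1$ relation, the $\mb{r}_{1}$-component is an identity precisely because $s_{0}$ solves \eqref{eqsharegen}, and the $\mb{r}_{12}$-component yields the M\"obius correspondence
\[
  t_{1}=\frac{\alpha^0}{t}-\frac{\alpha^0\beta^3}{\beta^2p}-\frac{\alpha^3 s_{0}}{p}
\]
between the parameters of $Q$ and $Q_{1}$ restricted to $g$.

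Next I would compute the tangent planes along $g$. As $g$ is a generator, the tangent plane of $Q$ at $Q(s_{0},t)$ is spanned by the two-dimensional subspace $g$ together with the direction $\partial_{s}Q=\mb{r}_{1}+t\mb{r}$ of the transversal generator through that point, and likewise the tangent plane of $Q_{1}$ at the same point is spanned by $g$ together with $\partial_{s_{1}}Q_{1}=\mb{r}_{11}+t_{1}\mb{r}_{1}$. Reducing modulo $g$ — on which $\mb{r}_{12}\equiv-(s_{0}/p)\mb{r}_{1}$ — the first direction becomes $\mb{r}_{1}+t\mb{r}$ and the second becomes $\alpha^0\mb{r}+(\alpha^1+t_{1}-\alpha^3 s_{0}/p)\mb{r}_{1}$, so the two planes coincide at the point with parameter $t$ iff $t\,(\alpha^1+t_{1}-\alpha^3 s_{0}/p)=\alpha^0$. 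Substituting the M\"obius correspondence above, this collapses to $t\,(\alpha^1-2\alpha^3 s_{0}/p-\alpha^0\beta^3/(\beta^2p))=0$, which holds for every $t$ exactly when $\alpha^1\beta^2p-2\alpha^3\beta^2 s_{0}-\alpha^0\beta^3=0$, i.e.\ exactly when $s_{0}$ is the double root displayed above, i.e.\ when $D^1=0$. Hence, under the hypothesis, the tangent planes of $Q$ and $Q_{1}$ agree at every point of $g$, which is the claim; the case $D^2=0$ follows verbatim after interchanging the two lattice directions and using \eqref{n2constraint}, \eqref{eqd2}.

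There is no deep obstacle here: the linear algebra with the frame and the reduction modulo $g$ are routine, and the content of the argument is the short identity that ties the $\mathcal{C}^1$ value of $p_{1}$ and the constraint \eqref{eqsharegen} together into the double-root condition. The one place that needs a little care is organising the parameter bookkeeping so that the two descriptions of $g$ are matched consistently, and in particular checking that the conclusion persists at the two exceptional points of $g$ — where $g$ meets the common edge and where it meets the fourth common line of $Q\cap Q_{1}$ — at which the M\"obius map degenerates (there one reads the equality of tangent planes as the natural projective limit, or, equivalently, verifies tangency at a generic point of $g$ and invokes the analogue of Theorem \ref{thm3pts} for a common generator). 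As an alternative one could argue intersection-theoretically: $Q\cap Q_{1}$ is a quartic containing the common edge and, by Corollary \ref{corsharegen1}, the transversal generator $g$; when $D^1=0$ this $g$ occurs as a doubled component of $Q\cap Q_{1}$, and a doubled line in the intersection of two smooth quadrics is exactly the condition of tangency along that line. I expect the explicit computation to be the cleaner route, given the apparatus already in place.
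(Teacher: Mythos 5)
Your proposal is correct and follows essentially the same route as the paper: both arguments fix a point on the shared generator, use the explicit parametrisations \eqref{eqqparam}, \eqref{eqq11param} together with the frame equations, and verify directly that the tangent planes of $Q$ and $Q_{1}$ coincide there, the only difference being that you organise the computation by reducing modulo the two-dimensional subspace spanned by the generator while the paper phrases the same condition as the vanishing of two $4\times4$ determinants (one of which is trivially zero). Your explicit M\"obius correspondence between $t$ and $t_{1}$ and the reduction to the double-root condition $D^1=0$ is a correct expansion of the step the paper leaves as ``easy to verify.''
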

    \begin{proof}
        Since $D^1=0$, the proof of Theorem \ref{thmsharegen1} implies that the shared
        generator is labelled by $ps_{1}=s$, where
            \begin{equation*}
                s=\displaystyle\frac{\alpha^1\beta^2
                p-\alpha^0\beta^3}{2\alpha^3\beta^2}.
            \end{equation*}
        Let $\mb{X}=Q_{1}(s_{1},t_{1})$ be a fixed point on the shared
        generator which does not also lie on the common edge so that
        \eqref{eqsgen2} leads to
            \begin{equation*}
                t=\displaystyle\frac{2\alpha^0\beta^2
                p}{\alpha^1\beta^2 p+2\beta^2 p
                t_{1}+\alpha^0\beta^3}.
            \end{equation*}
    Along a common generator, we then have, trivially,
        \begin{equation*}
            \left|\mb{X},\,\,\left.\frac{\partial}{\partial s}
                Q\right|_{\mb{X}},\,\,\left.\frac{\partial}{\partial t}
                Q\right|_{\mb{X}},\,\,\left.\frac{\partial}{\partial
                t_{1}}Q_{1}\right|_{\mb{X}}\right|=0
        \end{equation*}
    and it is easy to verify that
            \begin{align*}
                \left|\mb{X},\,\,\left.\frac{\partial}{\partial s}
                Q\right|_{\mb{X}},\,\,\left.\frac{\partial}{\partial t}
                Q\right|_{\mb{X}},\,\,\left.\frac{\partial}{\partial s_{1}} Q_{1}\right|_{\mb{X}}\right|=0.
            \end{align*}
    Hence, the tangent planes of $Q$ and $Q_{1}$ coincide at
    $\mb{X}$.
    \end{proof}

\section{Discrete projective minimal surfaces}

We now intend to establish under what circumstances envelopes of sets of lattice Lie quadrics exist.

\subsection{The tangency condition}

    \begin{definition}
        Let $Q$ and $Q_{1}$ be neighbouring quadrics on a discrete
        asymptotic net. Then, $\bX\in Q$ and $\bX_{1}\in Q_{1}$ are said
        to
        satisfy the tangency condition if the line joining
        $\bX$ and $\bX_{1}$ is tangent to both quadrics at the respective
        points.
    \end{definition}
    \begin{theorem}\label{thmuniqpt}
        Let $Q$ and $Q_{1}$ be two neighbouring quadrics on a
        $3\times 2$ patch of a discrete asymptotic net that satisfy the $\mathcal{C}^1$
        condition. Let $\bX\in Q$ be a generic point. Then, there exists a unique point $\bX_{1}\in Q_{1}$ such
                that $\bX$ and $\bX_{1}$ satisfy the tangency
                condition (cf.\ Figure \ref{tangency}).
    \end{theorem}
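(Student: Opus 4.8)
The plan is to convert the tangency condition into a pair of algebraic equations for the parameters $(s_{1},t_{1})$ of the unknown point $\bX_{1}$, and then to show that this pair has a unique solution. First I would fix homogeneous representatives: write $\bX=Q(s,t)$ as in \eqref{eqqparam} and $\bX_{1}=Q_{1}(s_{1},t_{1})$ as in \eqref{eqq1param}, and use the frame equations \eqref{eqframe1} to express $\bX_{1}$ in the frame $\bV$ via \eqref{eqq11param}; recall that the $\mathcal{C}^{1}$ condition forces $p_{1}=\alpha^{0}/(\beta^{2}p)$ by Theorem~\ref{propc1}. The line $\overline{\bX\bX_{1}}$ is tangent to $Q$ at $\bX$ exactly when $\bX_{1}$ lies in the tangent plane $T_{\bX}Q=\mathrm{span}\{\bX,\partial_{s}Q,\partial_{t}Q\}$, and it is tangent to $Q_{1}$ at $\bX_{1}$ exactly when $\bX$ lies in $T_{\bX_{1}}Q_{1}=\mathrm{span}\{\bX_{1},\partial_{s_{1}}Q_{1},\partial_{t_{1}}Q_{1}\}$. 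In $\mathbb{P}^{3}$ these two incidences are the $4\times4$ determinant conditions
\begin{equation*}
  \big|\,\bX_{1},\ \bX,\ \partial_{s}Q,\ \partial_{t}Q\,\big|=0,\qquad
  \big|\,\bX,\ \bX_{1},\ \partial_{s_{1}}Q_{1},\ \partial_{t_{1}}Q_{1}\,\big|=0,
\end{equation*}
which, expanded against the frame $\bV$, become polynomial relations in $(s_{1},t_{1})$ with coefficients built from $s,t$ and the entries of $L$.

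The key computation is an elimination. A short expansion shows that both relations are affine-bilinear in $(s_{1},t_{1})$: the first manifestly, since it is linear in the column $\bX_{1}$, which is itself affine-bilinear in $(s_{1},t_{1})$; the second once one checks that the a priori quadratic terms in $t_{1}$ cancel in the expansion. I would then take a suitable linear combination of the two relations --- concretely $\alpha^{0}$ times the first plus the second --- and verify that, precisely because $p_{1}=\alpha^{0}/(\beta^{2}p)$, the coefficient of $t_{1}$ in this combination vanishes identically, so that it reduces to a single \emph{linear} equation in $s_{1}$, namely (up to an overall factor $t$)
\begin{equation*}
  \big(2s\alpha^{3}-p\alpha^{1}+p\,p_{1}\beta^{3}\big)\,s_{1}
  \;=\; 2p\,p_{1}\beta^{1}+s\big(\alpha^{1}-p_{1}\beta^{3}\big).
\end{equation*}
For generic $\bX$ the coefficient on the left is nonzero, whence $s_{1}$ is determined uniquely; substituting back into the first relation, which is linear in $t_{1}$ with coefficient proportional to $t(s-ps_{1})\neq0$ for generic $\bX$, determines $t_{1}$ uniquely. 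This yields the asserted point $\bX_{1}=Q_{1}(s_{1},t_{1})$ and shows it is the only one.

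The place where care is needed --- and the real content of the statement --- is the cancellation in the elimination step. A naive count treats the two incidences as two conditions of bidegree $(1,1)$ on the parameters $(s_{1},t_{1})$ and predicts \emph{two} tangency partners for a given $\bX$; the theorem asserts that one of these is spurious. Indeed, eliminating $t_{1}$ produces the factor $t(s-ps_{1})$ times a linear polynomial in $s_{1}$, and the factor $t(s-ps_{1})$ corresponds exactly to the degenerate loci excluded by genericity: $t=0$, where $\bX$ lies on the edge common to $Q$ and $Q_{1}$, and $s=ps_{1}$, where $\bX$ lies on one of the two generators shared by $Q$ and $Q_{1}$ (Corollary~\ref{corsharegen1}) --- along such a shared generator the tangency condition is met trivially by every point, so it imposes no uniqueness. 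Establishing that this factorisation occurs amounts to showing that the $t_{1}$-coefficients of the two incidence relations are proportional, and it is here, and only here, that the $\mathcal{C}^{1}$ condition \eqref{eqp11} is used; the remaining steps are routine frame bookkeeping with \eqref{eqframe1} and \eqref{eqframe2}.
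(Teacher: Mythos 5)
Your proposal is correct and follows essentially the same route as the paper: the tangency condition is written as the two $4\times4$ determinant conditions \eqref{eqtang1}--\eqref{eqtang2}, which decouple into a relation linear in $s_1$ (your displayed equation is exactly \eqref{eqtang4}, equivalently \eqref{eqs2}) and a relation carrying the spurious factor $t(ps_1-s)$ corresponding to the common edge and shared generators excluded by genericity, leaving $t_1$ determined as in \eqref{eqt2}. The only cosmetic difference is that no linear combination is needed to eliminate $t_1$: in the paper's expansion the second determinant already comes out independent of $t$ and $t_1$ (see \eqref{eqtang4}), while the first factors as \eqref{eqtang3}.
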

    \begin{figure}[h]
        \centering
        \includegraphics[scale=1]{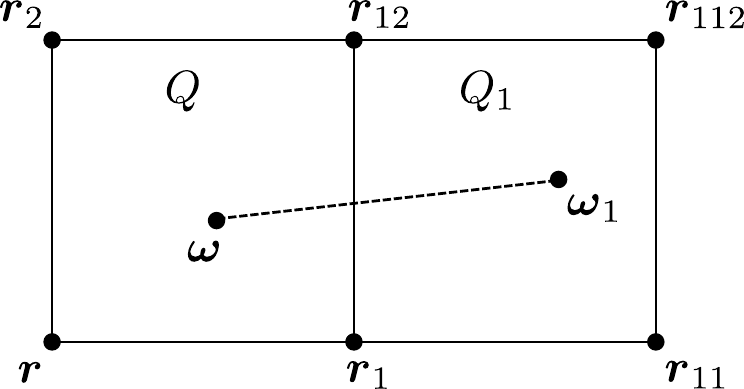}
        \caption{The tangency condition}
        \label{tangency}
    \end{figure}
    \begin{proof}
        Set $\bX=Q(s,t)$, $\bX_{1}=Q_{1}(s_{1},t_{1})$.
        The tangency condition is then encapsulated in the pair
            \begin{align}
                    \left|
                        \bX,\ds \bX_{1},\ds\left.\frac{\partial}{\partial
                        s}Q\right|_{\bX},\ds\left.\frac{\partial}{\partial t}
                        Q\right|_{\bX}
                    \right|&=0\label{eqtang1} \\[2mm]
                    \left|
                        \bX,\ds \bX_{1},\ds\left.\frac{\partial}{\partial
                        s_1}Q_{1}\right|_{\bX_{1}},\ds\left.\frac{\partial}{\partial t_1}
                        Q_{1}\right|_{\bX_{1}}
                    \right|&=0,\label{eqtang2}
            \end{align}
        which yields
            \begin{align}
                (ps_{1}-s)(\alpha^1\beta^2 p t+2\beta^2 p t t_{1}-2\alpha^0 \beta^2 p+\alpha^0\beta^3 t)&=0\label{eqtang3}\\[1mm]
                \left(p
                s_{1}-\displaystyle\frac{\alpha^1\beta^2p-\alpha^0\beta^3}{2\alpha^3\beta^2}\right)\left(s-\displaystyle\frac{\alpha^1\beta^2
                p-\alpha^0\beta^3}{2\alpha^3\beta^2}\right)-\displaystyle\frac{1}{(2\alpha^3\beta^2)^2}D^1&=0\label{eqtang4}.
            \end{align}
        If $ps_{1}=s$ then equation \eqref{eqtang4} reduces to
        equation \eqref{eqsharegen} and, hence, $\bX$ and $\bX_{1}$ lie on
        a common generator so that $\bX$ is non-generic.
        If $ps_{1}\neq s$ then
            \begin{align}
                s_{1}=\displaystyle\frac{\alpha^1\beta^2 p s+2\alpha^0\beta^1 p-\alpha^0\beta^3s}{p(2\alpha^3\beta^2 s-\alpha^1\beta^2p+\alpha^0\beta^3)
                }&=:\mathcal{S}^1(s)\label{eqs2}\\[1mm]
                t_{1}=\displaystyle\frac{2\alpha^0\beta^2 p-\alpha^1\beta^2pt-\alpha^0\beta^3 t}{2\beta^2 p t}&=:\mathcal{T}^1(t),\label{eqt2}
            \end{align}
    where genericity implies that $2\alpha^3\beta^2-\alpha^1\beta^2p
    s+\alpha^0\beta^3\neq0$.
    \end{proof}
    \begin{remark}\label{remarsgen}
            Applying the tangency condition to a generic point $\bX$ in the $n_2$
            direction generates the pair of equations
                \begin{align}
                        (p t_2-t)(\gamma^2\delta^1 p s+2 \delta^1 p
                        s s_2-2\gamma^0\delta^1 p+\gamma^0\delta^3
                        s)&=0\label{eqttanga}\\[1mm]
                        \left(p
                        t_2-\displaystyle\frac{\gamma^2\delta^1
                        p-\gamma^0\delta^3}{2\gamma^3\delta^1}\right)\left(t-\displaystyle\frac{\gamma^2\delta^1
                        p-\gamma^0\delta^3}{2\gamma^3\delta^1}\right)-\displaystyle\frac{1}{(2\gamma^3\delta^1)^2}D^2&=0.\label{eqttangb}
                \end{align}
            If $pt_2=t$ then equation \eqref{eqttangb} gives rise to equation \eqref{n2constraint} and labels the shared
            generators
            in the $n_2$ direction.
            In the generic case, \eqref{eqttanga} and \eqref{eqttangb} yield a point $\bX_2=Q_2(s_2,t_2)$ with
            \begin{align}
                s_{2}=\displaystyle\frac{2\gamma^0\delta^1p
                -\gamma^2\delta^1 ps-\gamma^0\delta^3 s}{2\delta^1 p
                s}&=:\mathcal{S}^2(s)\label{eqs22}\\[1mm]
                t_{2}=\displaystyle\frac{\gamma^2\delta^1p
                t+2\gamma^0\delta^2 p-\gamma^0\delta^3
                t}{p(2\gamma^3\delta^1t-\gamma^2\delta^1 p
                +\gamma^0\delta^3)}&=:\mathcal{T}^2(t)\label{eqt22}.
            \end{align}
        Thus, in the generic case, the tangency condition induces 4
        maps taking $(s,t)$ to $(s_1,t_1)$ and
        $(s_2,t_2)$ which we have denoted by $\mathcal{S}^1,\,\mathcal{T}^1$ and $\mathcal{S}^2,\,\mathcal{T}^2$ respectively.

        We also note that if $ps_{1}\neq s$, and \eqref{eqtang4}
        cannot be solved for either $s$ or $s_{1}$ then,
        necessarily,
            \begin{equation}\label{eqgrd}
                D^1=(\alpha^0\beta^3-\alpha^1\beta^2
                p)^2+4\alpha^0\alpha^3\beta^1\beta^2
                p=0
            \end{equation}
        and \eqref{eqtang4} becomes
            \begin{equation}\label{eqgr1}
                \left(ps_{1}-\displaystyle\frac{\alpha^1\beta^2
                p-\alpha^0\beta^3}{2\alpha^3\beta^2}\right)
                \left(s-\displaystyle\frac{\alpha^1\beta^2
                p-\alpha^0\beta^3}{2\alpha^3\beta^2}\right)=0.
            \end{equation}
        This turns out to give rise to an algebraic characterisation
        of canonical discrete analogues of Godeaux-Rozet and
        Demoulin surfaces as discussed in Section \ref{secgr}. Here, we merely observe that
        if $D^1=0$ and $\bX$ is a generic point
        then \eqref{eqgr1} implies that
            \begin{equation*}
                ps_{1}=s^*,\quad s^* = \displaystyle\frac{\alpha^1\beta^2
                p-\alpha^0\beta^3}{2\alpha^3\beta^2},
            \end{equation*}
        which
                is a root of \eqref{eqsharegen} and, hence, $\bX_{1}$
                lies on the shared generator of $Q$ and $Q_{1}$. On the other hand, \eqref{eqgr1} may also be satisfied by setting $s=s^*$ and, hence, $\bX$ lies on the shared generator.
\end{remark}

    The expressions \eqref{eqs2} and \eqref{eqs22} for
    $s_{1}$ and $s_{2}$ respectively are independent of $t$. Similarly, by
    virtue of \eqref{eqt2} and \eqref{eqt22}, $t_{1}$ and $t_{2}$ are independent
    of $s$. Thus, we have come to the following conclusion.
    \begin{corollary}\label{cormapgnr}
        Let $\Sigma$ be a discrete asymptotic net. Then, the tangency condition maps generators to generators in the sense that
        generic points on a generator of $Q$ are mapped to points on a generator of
        $Q_{1}$ (of the same type).
    \end{corollary}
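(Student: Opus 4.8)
The plan is to read the corollary off directly from the explicit form of the tangency maps recorded in Theorem~\ref{thmuniqpt} and Remark~\ref{remarsgen}, the essential point being the separation of variables already visible there. First I would recall that a generator of the quadric $Q$ parametrised by \eqref{eqqparam} is, by construction, one of the two one-parameter families of lines $\{Q(s,t):t\ \text{varying}\}$ with $s$ held fixed, and $\{Q(s,t):s\ \text{varying}\}$ with $t$ held fixed; these two families are the two ``types'' of generator (the two rulings) of $Q$, and likewise for $Q_{1}$. Thus, to prove the corollary it suffices to show that the tangency condition carries each family of generators of $Q$ into a single family of generators of $Q_{1}$.

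Next I would invoke the formulae \eqref{eqs2} and \eqref{eqt2}: for a generic point $\bX=Q(s,t)$, the unique point $\bX_{1}=Q_{1}(s_{1},t_{1})$ satisfying the tangency condition has $s_{1}=\mathcal{S}^1(s)$ and $t_{1}=\mathcal{T}^1(t)$, where --- crucially --- $\mathcal{S}^1$ is a function of $s$ alone and $\mathcal{T}^1$ a function of $t$ alone, as already noted immediately before the corollary. Consequently, if $\bX$ ranges over the generic part of the generator of $Q$ labelled by a fixed value of $s$, then $s_{1}=\mathcal{S}^1(s)$ is constant while $t_{1}=\mathcal{T}^1(t)$ varies, so the images $\bX_{1}$ all lie on the single generator of $Q_{1}$ labelled by that value of $s_{1}$ --- a generator of the same type. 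The symmetric argument with $t$ held fixed shows that a generator of the other family of $Q$ is carried into a generator of the corresponding family of $Q_{1}$, and repeating the argument verbatim in the $n_{2}$ direction with \eqref{eqs22}, \eqref{eqt22} and the maps $\mathcal{S}^2,\mathcal{T}^2$ completes the proof.

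I do not expect a genuine obstacle here: the only point needing a word of care is genericity, and it is harmless. The maps $\mathcal{S}^1,\mathcal{T}^1$ are well defined away from the loci where the denominators in \eqref{eqs2}, \eqref{eqt2} vanish and away from the shared generators of $Q$ and $Q_{1}$ (on which $\bX$ fails to be generic, cf.\ the proof of Theorem~\ref{thmuniqpt}), so on a generic generator of $Q$ the image is a dense subset of a generator of $Q_{1}$ and therefore lies on that generator. Hence the content of the corollary is precisely the factorisation $(s,t)\mapsto(\mathcal{S}^1(s),\mathcal{T}^1(t))$ exhibited in Remark~\ref{remarsgen}, recast in geometric language.
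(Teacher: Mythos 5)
Your argument is correct and is precisely the paper's: the corollary is deduced from the observation, stated immediately before it, that $s_{1}=\mathcal{S}^1(s)$ and $s_{2}=\mathcal{S}^2(s)$ in \eqref{eqs2}, \eqref{eqs22} are independent of $t$ while $t_{1}=\mathcal{T}^1(t)$ and $t_{2}=\mathcal{T}^2(t)$ in \eqref{eqt2}, \eqref{eqt22} are independent of $s$, so fixing one ruling parameter on $Q$ fixes the corresponding parameter on $Q_{1}$. Your additional remarks on the two rulings and on genericity only make explicit what the paper leaves implicit.
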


\subsection{Discrete projective minimal surfaces}

In analogy with the continuous theory, we are interested in
investigating the existence and properties of envelopes of a set of
lattice Lie quadrics associated with a discrete asymptotic net
$\Sigma$. Let $Q$ be a quadric associated with a quadrilateral of
$\Sigma$ and choose a generic point $\bX$ on $Q$. Then, by Theorem~\ref{thmuniqpt}, the tangency condition
uniquely determines points $\bomega_1$ and $\bomega_2$ on the neighbouring quadrics $Q_1$ and $Q_2$ respectively. 
Subsequent imposition of the tangency condition now
generates two points $\bX_{12}$ and $\bX_{21}$ on $Q_{12}$ and these are required to coincide if they are to be part of an envelope.
    \begin{figure}[h]
        \centering
        \includegraphics[scale=1]{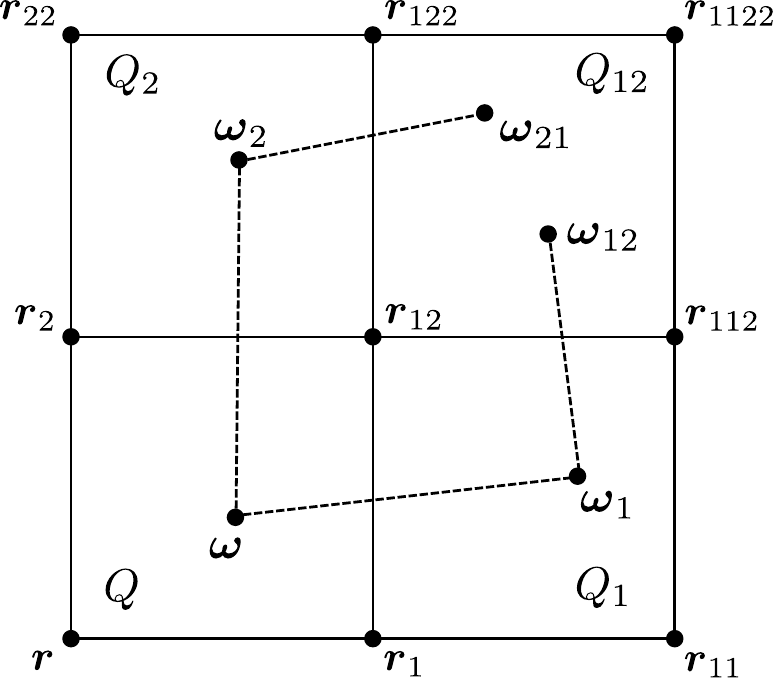}
        \caption{(Non-)closure under the tangency condition}
        \label{fignonclose}
    \end{figure}
    \begin{theorem}\label{thmpmcond}
        Let $\Sigma$ be a $3\times 3$ patch of a discrete asymptotic
        net, $Q,Q_{1},Q_{2}$ and $Q_{12}$ lattice Lie quadrics of
        $\Sigma$ as in Figure \ref{fignonclose}. Let $\bX$ be a generic point on $Q$ and $\bX_{1},\bX_{2},\bX_{12},\bX_{21}$ be points on the
        respective quadrics related by the tangency condition. Then, the commutativity
        conditions $\mathcal{S}_1^2\circ\mathcal{S}^1=\mathcal{S}_2^1\circ\mathcal{S}^2$
        and
        $\mathcal{T}_1^2\circ\mathcal{T}^1=\mathcal{T}_2^1\circ\mathcal{T}^2$
        associated with the closing condition $\bX_{21}=\bX_{12}$
        coincide and impose one scalar constraint on $\Sigma$ which
        is independent of the choice of~$\bX$.
    \end{theorem}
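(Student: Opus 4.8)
The plan is to exploit the product structure of the tangency maps that is already visible in the excerpt. From \eqref{eqs2}, \eqref{eqt2}, \eqref{eqs22} and \eqref{eqt22} the four maps factor as $\mathcal{S}^1(s), \mathcal{T}^1(t), \mathcal{S}^2(s), \mathcal{T}^2(t)$, i.e.\ the $n_1$-step alters $(s,t)$ by a transformation acting on $s$ alone and on $t$ alone, and similarly for the $n_2$-step. Hence the closing condition $\bX_{21} = \bX_{12}$ on $Q_{12}$ splits into two scalar requirements: the $s$-coordinate on $Q_{12}$ computed via the route $n_1$ then $n_2$ must equal the one computed via $n_2$ then $n_1$, and likewise for the $t$-coordinate. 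These are exactly $\mathcal{S}^2_1 \circ \mathcal{S}^1 = \mathcal{S}^1_2 \circ \mathcal{S}^2$ and $\mathcal{T}^2_1 \circ \mathcal{T}^1 = \mathcal{T}^1_2 \circ \mathcal{T}^2$ as functional identities in $s$ and $t$ respectively. (One must first note that $Q_{12}$ is well defined, which is the remark following Theorem~\ref{thm3pts}: $Q_{12} = Q_{21}$, so that both routes land on the same quadric and it makes sense to compare coordinates.)

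First I would write each of $\mathcal{S}^1, \mathcal{S}^2$ (and the shifted versions $\mathcal{S}^2_1, \mathcal{S}^1_2$) as a M\"obius transformation in $s$, reading off the $2\times 2$ coefficient matrices from \eqref{eqs2} and \eqref{eqs22} in terms of the frame data $\alpha^i, \beta^i, \gamma^i, \delta^i$ and the quadric parameter $p$ (using $p_1 = \alpha^0/(\beta^2 p)$ from \eqref{eqp11} and the analogous $p_2$, and their shifts, obtained by applying $L$ or $M$ to the coefficients). Composition of M\"obius transformations is matrix multiplication up to scalar, so the identity $\mathcal{S}^2_1 \circ \mathcal{S}^1 = \mathcal{S}^1_2 \circ \mathcal{S}^2$ becomes the proportionality of two explicit $2\times 2$ matrices built from the frame coefficients; equating entries (modulo an overall scalar) gives a small set of scalar equations, which I expect to collapse — after using the compatibility relations \eqref{eqcomp1} — to a single scalar constraint on $\Sigma$. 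The same computation with $t, \mathcal{T}^i$ in place of $s, \mathcal{S}^i$ gives, by the $x \leftrightarrow y$ / $1 \leftrightarrow 2$ symmetry of the setup, another single scalar constraint.

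The crux — and the step I expect to be the main obstacle — is showing that these two a priori different scalar constraints coincide, i.e.\ that the $s$-closing condition and the $t$-closing condition are the same equation in the frame data. I would approach this by reducing both to a common normal form: the geometric content of $\bX_{21} = \bX_{12}$ is a single statement about the lattice Lie quadric system (closure of the tangency correspondence around one elementary square), so it cannot genuinely impose two independent conditions. Concretely, I would carry out the matrix-entry comparison for the $\mathcal{S}$-identity, simplify using \eqref{eqcomp1} until a single polynomial relation $R = 0$ in the $\alpha^i, \beta^i, \gamma^i, \delta^i$ (and $p$, though one should check $p$-independence, as the theorem asserts independence of the choice of $\bX$ but the constraint may still formally involve $p$ — in fact since $\bX$ generic already fixes that the constraint is $\bX$-free, and the only remaining freedom is the quadric label $p$, one expects either $p$-independence or that $p$ is itself pinned down by the lattice Lie quadric structure) remains, then do the same for the $\mathcal{T}$-identity and verify the resulting relation is a scalar multiple of $R$. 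If a direct algebraic match is unwieldy, the cleaner route is to argue geometrically that both identities express the single condition ``the tangent line configuration closes up,'' hence have the same zero locus, and then confirm degree/leading-coefficient agreement to upgrade ``same locus'' to ``same equation.'' Finally, $\bX$-independence is immediate from the observation already made before the theorem: $s_1, s_2$ do not depend on $t$ and $t_1, t_2$ do not depend on $s$, so neither closing condition can involve the coordinates of the chosen generic point.
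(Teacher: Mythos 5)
Your overall computational plan --- writing the tangency maps as M\"obius transformations acting on $s$ and on $t$ separately, composing them along the two routes to $Q_{12}$, and reducing the resulting identities via the discrete Gauss--Mainardi--Codazzi equations \eqref{eqcomp1} --- is exactly the route the paper takes (its ``somewhat tedious calculation'' ends in the constraint $\Delta_2 T^1=0$, with the coincidence of the $s$- and $t$-conditions following from the relation \eqref{keydisc} extracted from \eqref{eqcomp1}). However, your argument for independence of the choice of $\bX$ is a non sequitur. The separation of variables ($s_1,s_2$ independent of $t$; $t_1,t_2$ independent of $s$) shows only that the $s$-closing condition does not involve $t$ and vice versa; it does not show that the $s$-closing condition is independent of $s$. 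For a single point $\bX=Q(s,t)$, the condition $\mathcal{S}_1^2\circ\mathcal{S}^1(s)=\mathcal{S}_2^1\circ\mathcal{S}^2(s)$ is, a priori, a quadratic equation in $s$ (cross-multiply the two M\"obius transformations), and a non-identity M\"obius transformation can perfectly well fix one particular value of $s$. The content of the theorem is that this quadratic factors as an $s$-free quantity times a factor which is nonzero for generic $s$, so that closure at one generic point is already equivalent to the functional identity. That is a nontrivial outcome of the explicit computation, not something obtained for free; your parenthetical ``since $\bX$ generic already fixes that the constraint is $\bX$-free'' conflates ``holds for one generic $s$'' with ``holds identically in $s$''.

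Second, your fallback argument that the $s$- and $t$-conditions ``cannot genuinely impose two independent conditions'' because closure is ``a single statement'' is not valid: $\bX_{12}=\bX_{21}$ is the coincidence of two points on the two-dimensional quadric $Q_{12}$ and is therefore, a priori, two scalar conditions. Their coincidence is a genuine theorem whose mechanism is the discrete Gauss--Mainardi--Codazzi system: the paper isolates from \eqref{eqcomp1} the identity \eqref{keydisc}, namely $\frac{\beta^1}{\beta^2}\Delta_1T^2=\frac{\delta^2}{\delta^1}\Delta_2T^1$, which is the discrete analogue of \eqref{gmc} and is precisely what forces $\Delta_1T^2=0\Leftrightarrow\Delta_2T^1=0$. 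Your primary computational route would uncover this, but the ``same zero locus'' shortcut should be dropped. You also omit the gauge of Lemma \ref{lemgauge1}, which the paper uses to cast the single constraint in the clean form $\Delta_2T^1=0$; this affects only the form, not the existence, of the constraint.
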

In order to derive a compact form of the above constraint, we require
    the following lemma.
    \begin{lemma}\label{lemgauge1}
        Let $\bV$ be the frame of a discrete asymptotic net given by
        \eqref{eqfrm1}. Then, $\bV$ admits a gauge such that
            \begin{equation*}
                (1-p^2)\displaystyle\frac{\beta^2\delta^1}{\beta^1\delta^2}=1.
            \end{equation*}
    \end{lemma}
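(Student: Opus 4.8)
The plan is to exploit the gauge freedom \eqref{gauget1}, \eqref{gauget2}—or rather its discrete analogue—to normalise the stated combination of frame coefficients to $1$. First I would identify precisely which gauge transformations are available for the frame $\bV$: the homogeneous coordinates $\mb{r}$ may be rescaled pointwise by a nowhere-vanishing function $\phi:\mathbb{Z}^2\to\mathbb{R}\setminus\{0\}$, i.e.\ $\mb{r}\to\phi\,\mb{r}$, which induces $\bV\to\mathrm{diag}(\phi,\phi_1,\phi_2,\phi_{12})\,\bV$ and correspondingly conjugates $L$ and $M$ by the appropriate shifted diagonal matrices. Tracking how each entry $\alpha^i,\beta^i,\gamma^i,\delta^i$ of $L$ and $M$ transforms under this conjugation gives explicit multiplicative factors in terms of $\phi$ and its shifts $\phi_1,\phi_2,\phi_{12}$; in particular $p$, being the parameter labelling the chosen quadric $Q$ through the quadrilateral, will pick up a factor that I need to compute from \eqref{eqqparam} under the rescaling of $\mb{r},\mb{r}_1,\mb{r}_2,\mb{r}_{12}$.

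Next I would write down the transformation law of the target quantity $(1-p^2)\,\beta^2\delta^1/(\beta^1\delta^2)$. The key point I expect is that the ratio $\beta^2\delta^1/(\beta^1\delta^2)$ transforms by some monomial $\phi^{a}\phi_1^{b}\phi_2^{c}\phi_{12}^{d}$ in the gauge factors, and that this same structure appears in a controllable way when combined with the $(1-p^2)$ prefactor, so that the whole expression can be set to a constant by solving a first-order (multiplicative) difference equation for $\phi$ along the lattice. Concretely, after reading off the exponents I would show the equation ``target $=1$'' reduces to a relation of the form $\phi_{?}/\phi_{?}=(\text{given function of the }n_i)$ on each edge, which is solvable by successive products (a discrete quadrature) once one fixes $\phi$ on a coordinate cross; the compatibility needed for this quadrature to be consistent should follow from the Gauss–Mainardi–Codazzi equations \eqref{eqcomp1}, which already encode $\bV_{12}=\bV_{21}$.

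I would organise the argument as: (1) state the diagonal gauge group and its action on $L,M$; (2) extract the transformation rules for $\beta^1,\beta^2,\delta^1,\delta^2$ and for $p$; (3) combine them to get the transformation rule for $(1-p^2)\beta^2\delta^1/(\beta^1\delta^2)$; (4) solve the resulting scalar difference equation for the gauge function $\phi$, checking consistency via \eqref{eqcomp1}; (5) conclude that in the resulting gauge the identity holds. The main obstacle I anticipate is step (2)–(3): getting the $p$-dependence right. Since $p$ is not a ``field'' of the net in the same sense as the $\alpha^i,\ldots$ but the chosen label of a quadric, I must be careful that rescaling $\mb{r}$ rescales the four vectors in \eqref{eqqparam} by $\phi,\phi_1,\phi_2,\phi_{12}$ and hence forces $p\to p\cdot\phi/\phi_{12}$ (up to an overall rescaling of $Q$ that can be absorbed), and I must verify that the combination $1-p^2$ then transforms compatibly with $\beta^1\delta^2/(\beta^2\delta^1)$ — it is this nonlinear-in-$p$ factor that makes the normalisation nontrivial, and checking that the difference equation for $\phi$ is genuinely solvable (no obstruction, e.g.\ that the relevant factor is nowhere zero given the nondegeneracy assumptions on the net) is where the real work lies.
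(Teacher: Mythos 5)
Your proposal follows essentially the same route as the paper: a pointwise diagonal gauge $\mb{r}\to\phi\,\mb{r}$, the induced transformation of $p$ and of the entries $\beta^1,\beta^2,\delta^1,\delta^2$, and the reduction of the normalisation to a scalar difference equation for the gauge function solved by discrete quadrature from data on a coordinate cross (no genuine compatibility issue arises, since the equation simply determines $\phi_{12}$ from $\phi,\phi_1,\phi_2$). The one computational correction is that, after absorbing the overall rescaling of $Q$, the parameter transforms as $p\to p\,\phi_1\phi_2/(\phi\,\phi_{12})$ rather than $p\,\phi/\phi_{12}$, while the ratio $\beta^2\delta^1/(\beta^1\delta^2)$ is in fact gauge-invariant, so the entire burden falls on the $1-p^2$ factor exactly as you anticipated.
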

    \begin{proof}
        Under a gauge transformation
            \begin{equation*}
                \bV\rightarrow\bV_g=\left(
                    \begin{array}{c}
                        x\mb{r}\\
                        x_{1}\mb{r}_{1}\\
                        x_{2}\mb{r}_{2}\\
                        x_{12}\mb{r}_{12}
                    \end{array}
                    \right)
            \end{equation*}
the matrices $L$ and $M$ become
            \begin{align*}
                &L_g=\left(
                    \begin{array}{cccc}
                        0&1&0&0\\
                        \displaystyle\frac{x_{11}\alpha^0}{x}&
                        \displaystyle\frac{x_{11}\alpha^1}{x_{1}}&0&
                        \displaystyle\frac{x_{11}\alpha^3}{x_{12}}\\[3mm]
                        0&0&0&1\\
                        0&\displaystyle\frac{x_{112}\beta^1}{x_{1}}&\displaystyle\frac{x_{112}\beta^2}{x_{2}}&\displaystyle\frac{x_{112}\beta^3}{x_{12}}
                    \end{array}
                    \right)\\[2mm]
                &M_g=\left(
                    \begin{array}{cccc}
                        0&0&1&0\\
                        0&0&0&1\\
                        \displaystyle\frac{x_{22}\gamma^0}{x}&0&
                        \displaystyle\frac{x_{22}\gamma^2}{x_{2}}&
                        \displaystyle\frac{x_{22}\gamma^3}{x_{12}}\\[4mm]
                        0&\displaystyle\frac{x_{122}\delta^1}{x_{1}}&\displaystyle\frac{x_{122}\delta^2}{x_{2}}&\displaystyle\frac{x_{122}\delta^3}{x_{12}}
                    \end{array}
                    \right).
                \end{align*}
        The parameters of the quadric $Q$ transform according to
        \begin{equation*}
            p_g=\left(\displaystyle\frac{x_{1}x_{2}}{xx_{12}}\right)p,\quad
            s_g=\displaystyle\frac{x_{2}}{x}s,\quad
            t_g=\displaystyle\frac{x_1}{x}t.
        \end{equation*}
    The expression $(1-p_g^2)\beta_g^2\delta_g^1/(\beta_g^1\delta_g^2)=1$
    then becomes
        \begin{equation}\label{eqgauge1}
            \left[1-\left(\displaystyle\frac{x_{1}x_{2}}{xx_{12}}\right)^2
            p^2\right]\displaystyle\frac{\beta^2\delta^1}{\beta^1\delta^2}=1
        \end{equation}
    and ``solving'' this expression for the gauge function $x$ defines the gauge.
    \end{proof}
    We now present the proof of Theorem
    \ref{thmpmcond}.
    \begin{proof}
    Set $\bX=Q(s,t)$, $\bX_1=Q_1(s_1,t_1)$, $\bX_2=Q_2(s_2,t_2)$, where
    the expressions for $s_1,t_1,s_2$ and $t_2$ are given by
    \eqref{eqs2}, \eqref{eqt2}, \eqref{eqs22} and \eqref{eqt22}
    respectively. The two points
    $\bX_{12}=Q_{12}(s_{12},t_{12})$ and $\bX_{21}=Q_{12}(s_{21},t_{21})$ arise from the tangency condition associated with $Q_{12}$ and $Q_1$ and $Q_{12}$ and
    $Q_2$ respectively. The closing condition is encapsulated in  the two constraints
    $s_{12}=s_{21}$ and $t_{12}=t_{21}$ or, equivalently, $\mathcal{S}_1^2\circ\mathcal{S}^1(s)=\mathcal{S}_2^1\circ\mathcal{S}^2(s)$
    and $\mathcal{T}_1^2\circ\mathcal{T}^1(t)=\mathcal{T}_2^1\circ\mathcal{T}^2(t)$.
    A somewhat tedious calculation reveals that these conditions coincide and, utilising the gauge of Lemma \ref{lemgauge1},
    the closing conditions may then be formulated as the condition
        \begin{equation}\label{eqpmcond}
            \Delta_2 T^1:=T_2^1 - T^1 = 0
        \end{equation}
    or, equivalently,
        \begin{equation}\label{eqpmcond2}
           \Delta_1 T^2:= T_1^2 - T^2 = 0,
        \end{equation}
    where
        \begin{equation*}
            T^1=\displaystyle\frac{D^1}{(\alpha^0\beta^2p)^2},\quad
            T^2=\displaystyle\frac{D^2}{(\gamma^0\delta^1 p)^2},
        \end{equation*}
    and $D^1$ and $D^2$ are the discriminants \eqref{eqdisc} and \eqref{eqd2}.
    The equivalence of these algebraic conditions may be seen by
    extracting the relation
        \begin{equation}\label{keydisc}
            \frac{\beta^1}{\beta^2}\Delta_1T^2=\frac{\delta^2}{\delta^1}\Delta_2T^1
        \end{equation}
    from the discrete Gauss-Mainardi-Codazzi equations \eqref{eqcomp1}. Finally, we note that the algebraic constraint \eqref{eqpmcond} (or \eqref{eqpmcond2}) is independent of $s$ and $t$,
    i.e., independent of the choice of $\bX$.
    \end{proof}

\begin{remark}
   Relation \eqref{keydisc} is the discrete analogue of the Gauss-Mainardi-Codazzi equation \eqref{gmc}
and the equivalent constraints \eqref{eqpmcond}, \eqref{eqpmcond2} may be regarded as the discrete version of the Euler-Lagrange condition \eqref{eulerlagrange} in the classical Theorem \ref{elcond}.
\end{remark}

The above theorem gives rise to the following natural algebraic definition of discrete projective minimal surfaces.

    \begin{definition}
        A discrete asymptotic net is a discrete
        projective minimal surface if $\Delta_1 T^2=0$
        or, equivalently, $\Delta_2 T^1=0$ in the gauge of Lemma
        \ref{lemgauge1}.
    \end{definition}

A detailed discussion of the discrete system \eqref{eqcomp1}, \eqref{eqpmcond} (or \eqref{eqpmcond2}) underlying discrete projective minimal surfaces, including the determination of its integrability and an analogue of the algebraic classification scheme presented in Section 2, is the subject of a separate publication.

\section{A Cauchy problem}
The algebraic discrete projective minimality condition may be exploited to
state the following well-posed geometric Cauchy problem.
    \begin{theorem}\label{thmcauchy}
        A discrete projective minimal surface $\Sigma$ represented by\linebreak \mbox{$\mb{r}:\mathbb{Z}^2\rightarrow\mathbb{R}^4$} is uniquely determined
        by the Cauchy data $\{\mathscr{C},\,Q_0\}$, where
            \begin{equation*}
                \mathscr{C}=\{\mb{r}(\mb{n}):\,\mb{n}=(0,*),\,(1,*),\,(*,0),\,(*,1)\}
            \end{equation*}
        is constrained by the planar star property and $Q_0$ is a fixed quadric associated with one of the quadrilaterals of $\mathscr{C}$ (Figure \ref{figcauchy}).
    \end{theorem}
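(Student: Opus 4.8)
The plan is to construct the surface $\Sigma$ inductively, one quadrilateral at a time, starting from the coordinate cross $\mathscr{C}$ and the seed quadric $Q_0$, and to check at each stage that the discrete projective minimality condition supplies precisely the one scalar degree of freedom consumed by closing up the lattice. First I would observe that the Cauchy data $\mathscr{C}$, being constrained by the planar star property, determines the frame entries along the two coordinate strips $n_1\in\{0,1\}$ and $n_2\in\{0,1\}$; more precisely, for every quadrilateral lying in the strip $n_2\in\{0,1\}$ (resp.\ $n_1\in\{0,1\}$) the matrix $L$ (resp.\ $M$) of \eqref{eqframe1}--\eqref{eqframe2} is fixed up to the residual gauge freedom, which we normalise once and for all using Lemma~\ref{lemgauge1}. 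Then, by Theorem~\ref{propc1} (its iterated form, as recorded in the remark following Theorem~\ref{thm3pts}), the seed quadric $Q_0$ propagates by the $\mathcal{C}^1$ condition to a uniquely determined quadric on every quadrilateral of the two strips; in particular this fixes the parameter $p=p(n_1,n_2)$ wherever it is already determined, via the propagation rule \eqref{eqp11} and its $n_2$-analogue.

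The inductive step is the heart of the argument. Suppose the net has been constructed (and verified to satisfy the frame equations, the $\mathcal{C}^1$-propagated quadric, and the minimality condition) on a staircase region containing the point $\mb{r}$, $\mb{r}_1$, $\mb{r}_2$, and we wish to adjoin $\mb{r}_{12}$. The unknowns are the homogeneous coordinates of $\mb{r}_{12}$, i.e.\ effectively the new frame entries $\alpha^1,\beta^3$ (the others in that row being already pinned by the strip data and the planar star condition along the two edges meeting at $\mb{r}_{12}$), subject to one constraint: the discrete projective minimality equation $\Delta_1 T^2=0$, equivalently $\Delta_2 T^1=0$, evaluated at the relevant vertex. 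Here $T^1=D^1/(\alpha^0\beta^2 p)^2$ and $T^2=D^2/(\gamma^0\delta^1 p)^2$ depend on the new frame entries through the discriminants \eqref{eqdisc}, \eqref{eqd2}, and the minimality equation is — by the last sentence of Theorem~\ref{thmpmcond} — a single scalar equation independent of any choice of point $\bX$. The key point is that this one equation, together with the discrete Gauss--Mainardi--Codazzi constraints \eqref{eqcomp1} already in force from the surrounding data, determines the missing datum uniquely (one checks that the equation is, say, affine or low-degree in the remaining free entry and that its leading coefficient is nonzero under the standing nondegeneracy assumptions $\alpha^1,\beta^3,\gamma^2,\delta^3$ permitted to vanish but $\alpha^0,\beta^2,\gamma^0,\delta^1\neq 0$, and $pq\neq 0$-type conditions). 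Simultaneously the $\mathcal{C}^1$ condition \eqref{eqp11} fixes the quadric $Q_{12}$ on the new quadrilateral, and the consistency relation $Q_{12}=Q_{21}$ noted in the remark before Theorem~\ref{thmsharegen1} guarantees the construction is independent of the order in which the staircase is filled.

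Finally I would assemble these steps: starting from $\mathscr{C}$ and $Q_0$ one fills the quadrant $n_1,n_2\geq 0$ by repeated application of the inductive step, and the other three quadrants identically (the Cauchy strips are two-sided), and the relation \eqref{keydisc} extracted from \eqref{eqcomp1} ensures that the two forms $\Delta_2 T^1=0$ and $\Delta_1 T^2=0$ of the minimality condition remain equivalent throughout, so no overdetermination arises. Uniqueness is immediate from the fact that every choice made in the induction was forced. I expect the main obstacle to be the bookkeeping in the inductive step: verifying that the single scalar minimality equation is genuinely solvable for the one remaining frame parameter — i.e.\ that its top-degree coefficient cannot vanish on the admissible locus — and that the planar star conditions along the two new edges together with the Gauss--Mainardi--Codazzi relations \eqref{eqcomp1} leave exactly one free parameter rather than zero or two. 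This is a finite but somewhat delicate count of equations versus unknowns on the $2\times 2$ stencil, and it is where the normalisation of Lemma~\ref{lemgauge1} and the nondegeneracy hypotheses must be used in an essential way.
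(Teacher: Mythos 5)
Your proposal follows essentially the same route as the paper: propagate $Q_0$ along the two Cauchy strips via the $\mathcal{C}^1$ condition, observe that the planar star property confines each new vertex to a line (one projective parameter), and use the single scalar closing/minimality condition of Theorem~\ref{thmpmcond} to fix that parameter, then iterate. The only difference is presentational: the paper phrases the remaining degree of freedom geometrically as the position of $\mb{r}_{1122}$ on the line of intersection of the planes spanned by $\{\mb{r}_{12},\mb{r}_{11},\mb{r}_{112}\}$ and $\{\mb{r}_{12},\mb{r}_{22},\mb{r}_{122}\}$, and imposes the closing condition $\bX_{12}=\bX_{21}$ for a generic $\bX\in Q_0$ directly, rather than counting frame entries.
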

    \begin{proof}
        We note that the Cauchy data given by $\mathscr{C}$ define part of a discrete asymptotic net consisting of
        two strips as in Figure \ref{figcauchy}. Without loss of
        generality, let $Q_0$ be the quadric associated with the
        quadrilateral
        $[\mb{r},\mb{r}_{1},\mb{r}_2,\mb{r}_{12}]$.
        As a result of the $\mathcal{C}^1$ condition, $Q_0$ uniquely
        determines the quadric associated with each quadrilateral
        along the strips. We now show that the vertex $\mb{r}_{1122}$ is uniquely determined by the projective minimality condition.
        The planar star condition implies that the point $\mb{r}_{1122}$ lies
        on the line of intersection of the planes spanned by
        $\{\mb{r}_{12},\mb{r}_{11},\mb{r}_{112}\}$ and
        $\{\mb{r}_{12},\mb{r}_{22},\mb{r}_{122}\}$ as indicated in Figure \ref{figcauchy}. For a fixed
        $\mb{r}_{1122}$ on this line, the quadric
        $Q_{12}$ is uniquely determined by the $\mathcal{C}^1$
        condition with respect to $Q_1$ or $Q_2$. Let $\bX\in Q_0$ be a
        generic point. Then, the tangency condition uniquely
        determines points $\bX_1\in Q_1$, $\bX_2\in Q_2$ and
        $\bX_{12},\,\bX_{21}\in Q_{12}$. As shown in Theorem \ref{thmpmcond}, the closing condition
        $\bX_{12}=\bX_{21}$ reduces to a single scalar condition and this
        determines the position of $\mb{r}_{1122}$ on the line of
        intersection.  We can then iterate this procedure to
        construct simultaneously all vertices of the discrete asymptotic net, the
        associated family of lattice Lie quadrics and the associated
        envelope.
    \end{proof}
    \begin{figure}[h]
        \centering
        \includegraphics[scale=1]{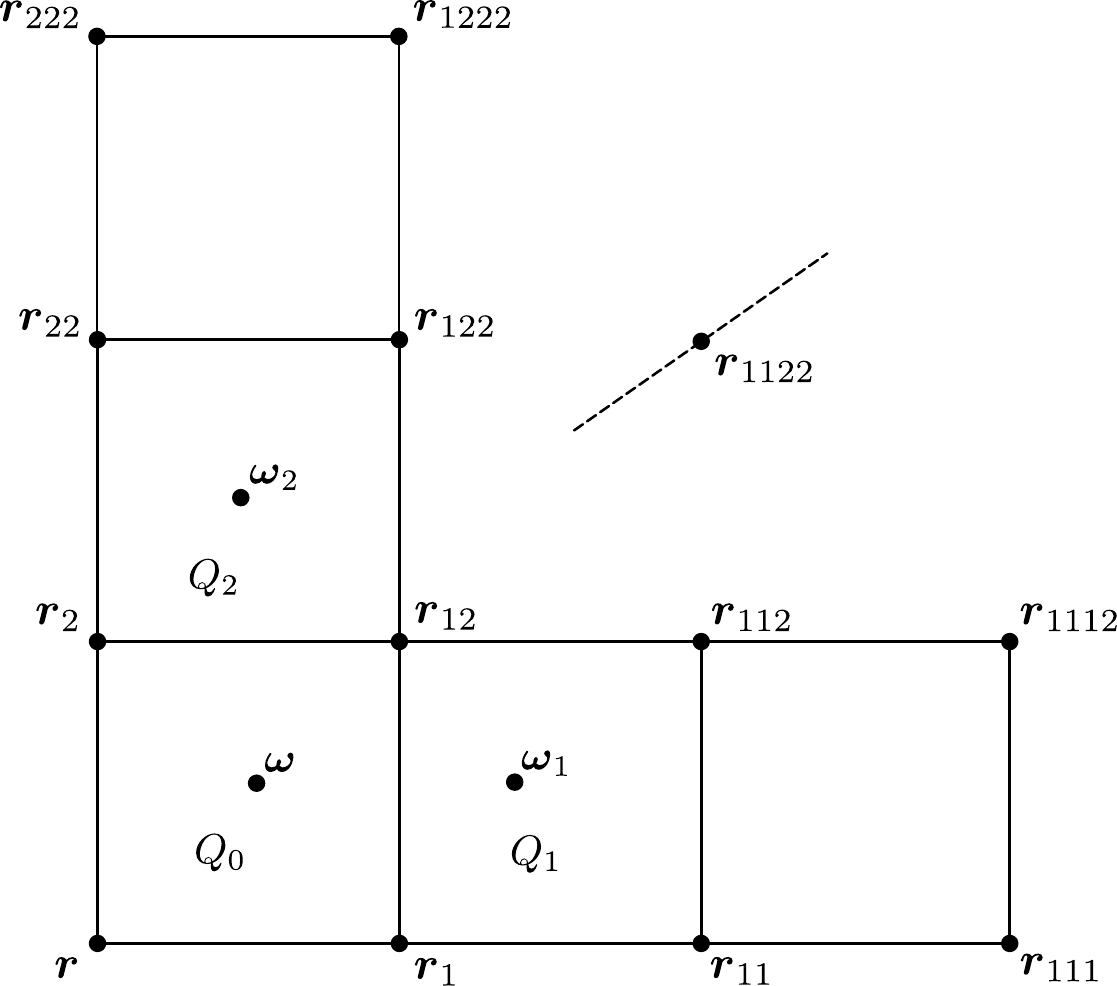}
        \caption{The evolution of Cauchy data for discrete projective minimal surfaces}
        \label{figcauchy}
    \end{figure}
    Since the Cauchy data include a fixed quadric which in turn determines a set of lattice Lie quadrics, the natural question arises as to
    whether a given discrete projective minimal surface admits more
    than one set of lattice Lie quadrics.
    \begin{theorem}\label{thm3}
        A discrete projective minimal surface admits
        only one set of lattice Lie quadrics.
    \end{theorem}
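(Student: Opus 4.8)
The plan is to convert the geometric uniqueness into an algebraic uniqueness for a single scalar. Recall that, by the result that a set of lattice Lie quadrics is determined by the single quadric it assigns to one prescribed quadrilateral, once a homogeneous representative $\mb{r}$ of $\Sigma$ has been fixed a set of lattice Lie quadrics is nothing but a choice of the parameter $p$ of the quadric $Q$ on that quadrilateral in the parametrisation \eqref{eqqparam}. By the $\mathcal{C}^1$ recursion \eqref{eqp11}, $p_1=\alpha^0/(\beta^2 p)$, together with its $n_2$-counterpart $p_2=\gamma^0/(\delta^1 p)$, every parameter $p(n_1,n_2)$ is then a fixed rational function of this one number. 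Hence the sets of lattice Lie quadrics admitted by $\Sigma$ form at most a one-parameter family labelled by $p$, and it suffices to show that the condition of admitting an envelope selects a single admissible value of $p$; the spurious values $p=0$ and $p=\infty$, which do not correspond to a nondegenerate quadric through the four edges, are to be discarded.

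By Theorem \ref{thmpmcond}, admitting an envelope is equivalent to the closing condition, which is one scalar constraint. To see that it pins down $p$, I would pass to the gauge of Lemma \ref{lemgauge1} adapted to the chosen family member; there $p^2=1-\kappa$ with the gauge-invariant quantity $\kappa:=\beta^1\delta^2/(\beta^2\delta^1)$, and, expanding $D^1$ and substituting $p^2=1-\kappa$, the quantity $T^1=D^1/(\alpha^0\beta^2p)^2$ becomes affine in $p$, say $T^1=A+Bp$ with $A$ and $B$ built from the frame alone. Inserting $p_2=\gamma^0/(\delta^1 p)$ and once more $p^2=1-\kappa$, the projective minimality condition $\Delta_2T^1=0$ of \eqref{eqpmcond} collapses to a relation that is \emph{linear} in $p$, namely $(A_2-A)\,p=B(1-\kappa)-B_2\gamma^0/\delta^1$, so that away from the degenerate locus $A_2=A$ the value of $p$ is uniquely fixed; the companion statement obtained from $T^2$ and \eqref{eqpmcond2} is consistent with this through \eqref{keydisc}.

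It then remains to draw the gauge-free conclusion. The residual freedom that preserves the adapted gauge acts on $p$, and on the two sides of the displayed linear relation, by the same sign, so that the value of $p$ it extracts names one and the same geometric quadric $Q_0$ on the prescribed quadrilateral; by the first paragraph this determines the whole set of lattice Lie quadrics, and since every family member admitting an envelope must lead to this same $Q_0$, the set is unique. The main obstacle is to make this last step airtight without circularity; equivalently, one may work throughout in a single fixed gauge, write the closing condition of Theorem \ref{thmpmcond} as the Möbius identity $\mathcal{T}^2_1\circ\mathcal{T}^1=\mathcal{T}^1_2\circ\mathcal{T}^2$ with every map expressed via \eqref{eqp11} and $p_2=\gamma^0/(\delta^1 p)$ as an explicit function of the base parameter, and then carry out the (somewhat tedious) elimination showing that, after cancelling the spurious factors, the resulting polynomial in that parameter is of the first degree. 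A separate but routine check is needed in the borderline cases $D^1=0$ or $D^2=0$ — the discrete Godeaux–Rozet and Demoulin surfaces of Remark \ref{remarsgen} — where the tangency maps degenerate: by the argument of that remark the relevant point lies on a shared generator, and one re-runs the reduction with this in mind, the conclusion being unchanged.
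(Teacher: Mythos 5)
Your opening reduction is sound and matches the paper's starting point: by the $\mathcal{C}^1$ recursion \eqref{eqp11} and its $n_2$-counterpart, a set of lattice Lie quadrics is equivalent to the single scalar $p$ labelling the quadric on one reference quadrilateral, so the theorem amounts to showing that at most one value of $p$ is compatible with the existence of an envelope. The problem lies in how you try to establish that uniqueness. First, the gauge argument is circular: the gauge of Lemma \ref{lemgauge1} is \emph{adapted to the chosen} $p$ --- it is defined precisely by forcing $p^2=1-\beta^1\delta^2/(\beta^2\delta^1)$ --- so within that gauge $p$ is already fixed up to sign by the frame and cannot then be ``solved for'' from $\Delta_2T^1=0$; comparing two candidate sets labelled by $p$ and $\hat p$ requires either a gauge-free formulation or a single fixed (non-adapted) gauge, in which $\Delta_2T^1=0$ is no longer the form the closing condition takes. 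Second, and decisively, the linearity claim on which everything rests is asserted but never computed, and it is false: for a fixed surface, the vertex $\mb{r}_{1122}$ determined by the closing condition is a \emph{quadratic} function of $p$, so demanding that a second parameter $\hat p$ reproduce the same vertex, $\mb{r}_{1122}(\hat p)\sim\mb{r}_{1122}(p)$, yields a quadratic equation $q(\hat p)=0$ which in general has a second root besides $\hat p=p$. A single $3\times3$ patch therefore does not pin down $p$, and your proposal has no mechanism to exclude the spurious root.

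The missing idea is to use more of the lattice. On a $4\times3$ patch the closing condition must hold on two overlapping $3\times3$ sub-patches, so the vertex $\mb{r}_{\bar{1}22}$ furnishes a second quadratic $\bar q(\hat p)=0$; one then checks that the only common root of $q$ and $\bar q$ is $p$ itself, whence $\hat Q=Q$ and the set of lattice Lie quadrics is unique. Without this second condition (or an actual computation showing the first condition degenerates to first degree, which it does not), the argument does not close. Your final remark about the degenerate cases $D^1=0$ or $D^2=0$ is a reasonable caveat, but it is secondary to repairing the main step.
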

    \begin{proof}
        Consider the $4\times 3$ patch of a discrete projective minimal surface in Figure
        \ref{fig4x3patch}. Let $Q$ be a fixed member, labelled by $p$, of the
        one-parameter family of quadrics passing through the
        quadrilateral
        $[\mb{r},\mb{r}_{1},\mb{r}_{2},\mb{r}_{12}]$. We note
        that, as a result of the $\mathcal{C}^1$ condition, all other
        quadrics in the set of lattice Lie quadrics which contains
        $Q$ are determined. Let $\bX$ be a generic point on $Q$.
        The tangency condition then delivers points $\bX_1\in Q_{1},\,\bX_2\in Q_2$ 
        and $\bX_{12},\bX_{21} \in Q_{12}$ and the closing condition $\bX_{12}=\bX_{21}$
        determines $\mb{r}_{1122}$ uniquely. It turns out that
        $\mb{r}_{1122}$ is a quadratic function of $p$. If we now assume that
        $\mb{r}_{1122}(p)\sim\mb{r}_{1122}(\hat{p})$ for another quadric $\hat{Q}$ 
        labelled by $\hat{p}$ then we obtain a
        quadratic equation in $\hat{p}$ which we denote by $q(\hat{p})=0$. Similarly, the
        vertex $\mb{r}_{\bar{1}22}$ generates a quadratic
        $\bar{q}(\hat{p})=0$. It is then easy to check that the only
        common root of $q$ and $\bar{q}$ is $p$ and, hence, $\hat{Q}=Q$.
    \end{proof}
    \begin{figure}[h]
        \centering
        \includegraphics[scale=1]{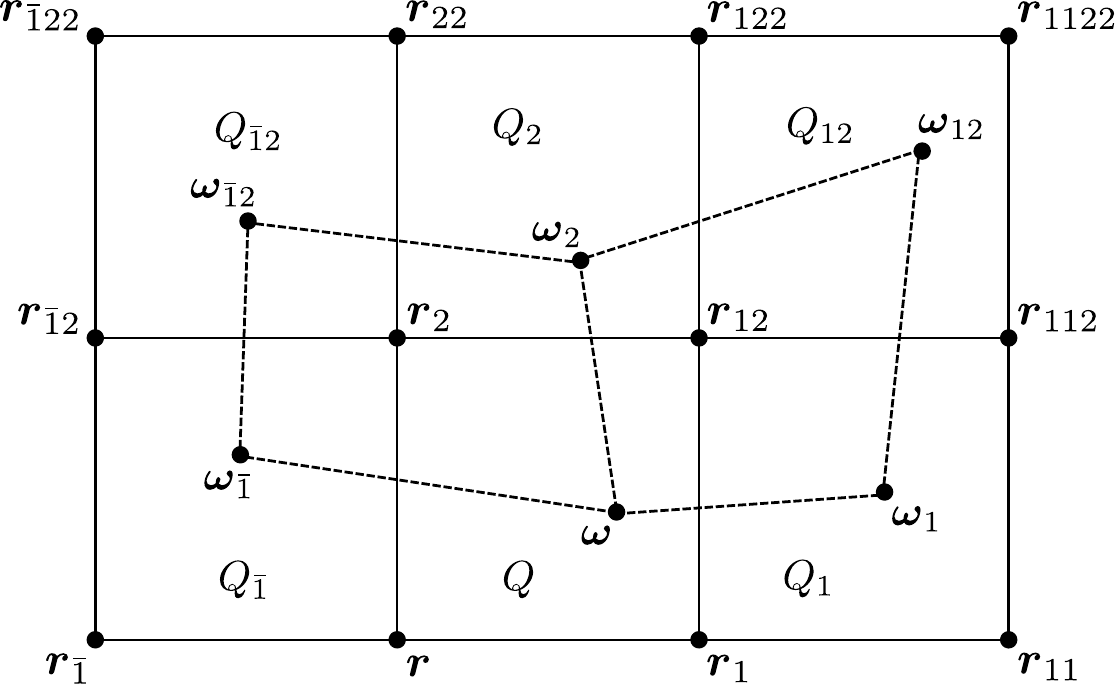}
        \caption{A $4\times3$ patch of a discrete projective minimal surface}\label{fig4x3patch}
    \end{figure}

\section{A local classification of the envelopes of discrete PMQ surfaces}\label{seclclass}
\label{local}

In order to produce a geometric characterisation of discrete
projective minimal surfaces, it is necessary to classify all surfaces
which admit envelopes, i.e., by definition, it is necessary to classify discrete PMQ
surfaces. Let $\Sigma$ be a discrete PMQ surface and denote by
$\Omega$ an envelope of the set of associated lattice Lie quadrics
of $\Sigma$. Bearing in mind the continuum limit, from now on we
exclude ``hybrids" of different types of surfaces by imposing a
homogeneity condition on the envelope in the sense that if a
property related to the envelope holds for pairs of neighbouring
quadrics then it holds for all neighbouring quadrics of the same
type. In the following, whenever this principle is applied, the relevant property is identified.
    \begin{figure}[h]
        \centering
          \includegraphics[scale=1]{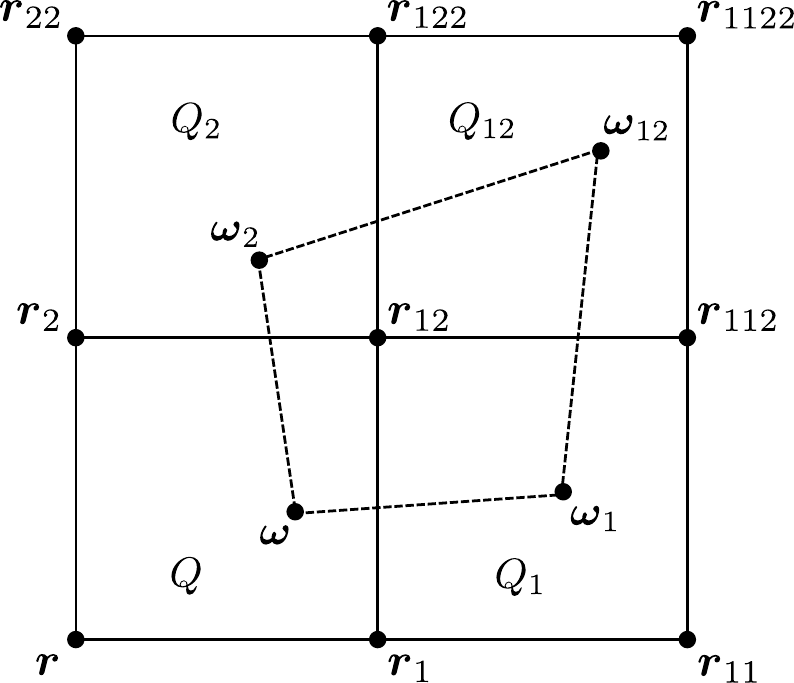}
        \caption{A $3\times3$ patch of a discrete asymptotic net and an associated quadrilateral of an enevelope}
        \label{fig3x3patch}
    \end{figure}

In the examination of different types of envelopes, the
maps $\mathcal{S}^i$ and $\mathcal{T}^i$ play an essential role.
Consider a $3\times 3$ patch of $\Sigma$ as displayed in Figure~\ref{fig3x3patch}
and the associated quadrilateral of $\Omega$. The vertices of $\Omega$ are labelled by
\mbox{$\bX=Q(s,t)$}, $\bX_1=Q_1(s_1,t_1)$, $\bX_2=Q_2(s_2,t_2)$ and
$\bX_{12}=Q_{12}(s_{12},t_{12})$. Depending on whether or not the maps $\mathcal{S}^i$ and $\mathcal{T}^i$ are defined, whereby the homogeneity condition is taken into account, the possible cases are:
    \begin{enumerate}[label=\arabic*.]
        \item
        The generic case where all maps $\mathcal{S}^i$ and $\mathcal{T}^i$ are defined and, hence, the closing conditions are satisfied due to the existence of the discrete envelope $\Omega$. $\Sigma$ is then discrete projective
        minimal by Theorem \ref{thmpmcond}. This is the case where
        none of the edges of the quadrilateral of $\Omega$ are shared
        generators of the associated pairs of quadrics.
        \item
        The map $\mathcal{S}^2$ is defined. There are then two subcases:
            \begin{enumerate}[label=(\alph*), ref=\theenumi{}(\alph*)]
                \item
                $\mathcal{S}^1$ is defined. Then, since $\Omega$ is a discrete envelope, the closing condition
                $\mathcal{S}_1^2\circ\mathcal{S}^1=\mathcal{S}_2^1\circ\mathcal{S}^2$ is satisfied and, hence, $\Sigma$ is discrete projective
                minimal.\label{case2a}
                \item
                $\mathcal{S}^1$ is not defined. By virtue of \eqref{eqtang4}, \eqref{eqs2}, this is the case when  \mbox{$2\alpha^3\beta^2 s  - \alpha^1\beta^2 p+\alpha^0\beta^3=0$} 
                and, hence,
                    \begin{equation}\label{eqsshare1}
                        s=\displaystyle\frac{\alpha^1\beta^2
                        p-\alpha^0\beta^3}{2\alpha^3\beta^2}
                    \end{equation}
                and $D^1=0$. Accordingly, $\Delta_2 T^1=0$ and, thus, $\Sigma$ is discrete projective minimal.
                Moreover, since $D^1=0$, by Corollary \ref{corsharegen1}, neighbouring quadrics in the $n_1$ direction have a coinciding shared generator which is labelled
                by
                $ps_1=s$ and \eqref{eqsshare1}. Thus, this case corresponds to the edges of $\Omega$ in the $n_1$ direction consisting of (coinciding)
                shared generators
                of neighbouring pairs of quadrics.\label{wsim}
            \end{enumerate}
        \item
        $\mathcal{S}^2$ is not defined. By
                \eqref{eqttanga}, this is the case when $pt_2=t$ and, hence, by Remark \ref{remarsgen}, the edges of $\Omega$ in the $n_2$
                direction are shared generators. The remaining cases
        are then:
            \begin{enumerate}[label=(\alph*)]
                \item
                $\mathcal{T}^2$ is defined.  There are then two additional sub-cases:
                    \begin{enumerate}[label=(\roman*)]
                        \item
                        $\mathcal{T}^1$ is defined. Then similarly to \ref{case2a}, $\Sigma$ is discrete
                        projective minimal as the projective minimality
                        condition $\mathcal{T}_1^2\circ \mathcal{T}^1=\mathcal{T}_2^1\circ
                        \mathcal{T}^2$ is satisfied.
                        \item
                        $\mathcal{T}^1$ is not defined. By \eqref{eqtang3} and \eqref{eqt2}, $\mathcal{T}^1$ not being well defined corresponds to the case
                        $ps_1=s$
                        which, by the proof of Theorem \ref{thmsharegen1}, implies that the edges of $\Omega$ in the $n_1$ direction are shared generators of neighbouring quadrics.
                        Thus, this case corresponds
                        to all of the edges of the envelope $\Omega$ consisting
                         of shared generators. This envelope always
                        exists locally, that is, for a $3\times 3$ patch, and is no restriction on $\Sigma$
                        which may or may not be discrete projective minimal.
                    \end{enumerate}
                \item
                $\mathcal{T}^2$ is not defined. This case is similar to
                \ref{wsim} and leads to $D^2=0$. $\Sigma$ is then discrete projective minimal since
                $\Delta_1 T^2=0$.
            \end{enumerate}
    \end{enumerate}

\section{A classification of discrete PMQ surfaces}
As a result of the discussion of Section \ref{local}, we have the following geometric characterisation of discrete projective minimal surfaces.
    \begin{theorem}\label{thmgeopm}
        Let $\Sigma$ be a discrete asymptotic net. If there exists a set of lattice Lie quadrics which admits an
        envelope $\Omega$ whose edges are not all shared generators of the
        lattice Lie quadrics then $\Sigma$ is a discrete projective minimal
        surface. Conversely, if $\Sigma$ is a discrete projective minimal surface and $D^1\neq0$ or $D^2\neq0$ then there exists an
        envelope $\Omega$ whose edges are not all shared generators of the
        lattice Lie quadrics.
    \end{theorem}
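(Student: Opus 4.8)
The plan is to translate the case analysis carried out in Section~\ref{local} into a proof of the two implications. For the forward direction, suppose $\Sigma$ admits a set of lattice Lie quadrics with an envelope $\Omega$ at least one of whose edges is \emph{not} a shared generator of the relevant neighbouring pair. On such an edge the corresponding tangency map ($\mathcal{S}^1$, $\mathcal{T}^1$, $\mathcal{S}^2$ or $\mathcal{T}^2$) is defined in the genuine (non-degenerate) sense of Theorem~\ref{thmuniqpt}, i.e.\ the relevant vertex of $\Omega$ arises from the ``$ps_1\neq s$'' (resp.\ ``$pt_2\neq t$'') branch of equations \eqref{eqtang3}--\eqref{eqttangb}. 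I would then walk through the enumerated cases $1,2,3$ of Section~\ref{local}: in every branch in which at least one map is defined in this strong sense---cases $1$, $2(a)$, $2(b)$, and $3(a)(i)$ and $3(b)$---the closing condition $\bX_{12}=\bX_{21}$ forced by the existence of $\Omega$ yields either a genuine commutativity identity $\mathcal{S}_1^2\circ\mathcal{S}^1=\mathcal{S}_2^1\circ\mathcal{S}^2$ (equivalently $\mathcal{T}_1^2\circ\mathcal{T}^1=\mathcal{T}_2^1\circ\mathcal{T}^2$) or one of the degenerate conditions $D^1=0$ or $D^2=0$; in all these cases $\Delta_2 T^1=0$, so $\Sigma$ is discrete projective minimal by Theorem~\ref{thmpmcond} and the definition of discrete projective minimality. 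The only branch in which nothing is forced, namely $3(a)(ii)$, is precisely the one in which \emph{all four} edges of $\Omega$ are shared generators; since we have excluded that configuration by hypothesis, we are always in one of the former branches, and $\Sigma$ is discrete projective minimal. Invoking the homogeneity principle of Section~\ref{local} ensures that the type of degeneracy is the same for all neighbouring pairs, so one scalar condition per lattice site suffices and no hybrids arise.

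For the converse, assume $\Sigma$ is discrete projective minimal, so $\Delta_1 T^2=0$ (equivalently $\Delta_2 T^1=0$) in the gauge of Lemma~\ref{lemgauge1}, and assume without loss of generality $D^1\neq 0$ (the case $D^2\neq 0$ being symmetric under interchange of the two lattice directions). I would construct an envelope $\Omega$ explicitly. Fix a set of lattice Lie quadrics (guaranteed by Theorem~\ref{propc1} and its corollaries: choosing one quadric on one quadrilateral propagates uniquely by the $\mathcal{C}^1$ condition). On the base quadric $Q$ choose a generic point $\bX=Q(s,t)$ with $ps_1\neq s$ and $pt_2\neq t$, which is possible precisely because the maps $\mathcal{S}^1,\mathcal{T}^1$ (and $\mathcal{S}^2,\mathcal{T}^2$) are then well defined as rational maps---here the hypothesis $D^1\neq 0$, together with genericity of $\bX$, guarantees we avoid the exceptional locus where \eqref{eqtang4} fails to determine $s_1$ (cf.\ Remark~\ref{remarsgen}, equations \eqref{eqgrd}--\eqref{eqgr1}). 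The tangency condition then produces $\bX_1,\bX_2$ and, via $\mathcal{S}_1^2,\mathcal{T}_1^2,\mathcal{S}_2^1,\mathcal{T}_2^1$, the two points $\bX_{12},\bX_{21}$ on $Q_{12}$. The discrete projective minimality condition $\Delta_2 T^1=0$ is by Theorem~\ref{thmpmcond} exactly the statement that $\mathcal{S}_1^2\circ\mathcal{S}^1=\mathcal{S}_2^1\circ\mathcal{S}^2$ and $\mathcal{T}_1^2\circ\mathcal{T}^1=\mathcal{T}_2^1\circ\mathcal{T}^2$, hence $\bX_{12}=\bX_{21}$, i.e.\ the closing condition holds. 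Iterating over the whole lattice (exactly as in the Cauchy problem of Theorem~\ref{thmcauchy}) produces a consistent dual $\mathbb{Z}^2$-lattice of points, each star of which touches the corresponding lattice Lie quadric by the tangency condition---this is the required envelope $\Omega$. Finally, since $\bX$ was chosen with $ps_1\neq s$ (using $D^1\neq 0$), the edges of $\Omega$ in the $n_1$-direction are genuine tangency edges, not shared generators, so $\Omega$ is not an all-shared-generator envelope.

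The main obstacle I anticipate is the converse, and specifically the verification that the explicit construction is globally consistent and genuinely produces an envelope rather than merely satisfying the closing condition on a single $3\times 3$ patch. Two points need care: first, that a generic $\bX$ exists for which \emph{all} of $\mathcal{S}^1,\mathcal{T}^1,\mathcal{S}^2,\mathcal{T}^2$ (and their shifts propagated along the lattice) are simultaneously well defined---this is where $D^1\neq 0$ is used to push $\bX$ off the degenerate locus \eqref{eqgr1}, and one must check that the bad locus is a proper (lower-dimensional) subvariety at each site, so a generic choice survives propagation; second, that the single scalar projective minimality relation $\Delta_2 T^1=0$ is exactly what is needed at \emph{every} elementary cube of the construction, which follows from Theorem~\ref{thmpmcond} applied at each site together with relation \eqref{keydisc} ensuring the $\mathcal{S}$- and $\mathcal{T}$-closing conditions are equivalent. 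Everything else---the uniqueness of the lattice Lie quadrics from one quadric, the existence of the tangency maps, the reduction of the closing condition to $\Delta_2 T^1=0$---is already available from Theorems~\ref{propc1}, \ref{thmuniqpt} and \ref{thmpmcond}, so the proof is essentially an organised invocation of the case analysis of Section~\ref{local} plus the explicit iterative construction.
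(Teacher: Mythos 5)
Your proposal is correct and follows essentially the same route as the paper: the forward implication is read off from the case analysis of Section~\ref{local}, and the converse builds the envelope from a generic point via the tangency maps $\mathcal{S}^i,\mathcal{T}^i$, whose compatibility is precisely the minimality condition $\Delta_2 T^1=0$ of Theorem~\ref{thmpmcond}. The only place the paper is more explicit is the mixed case (e.g.\ $D^1=0$, $D^2\neq0$), where it observes that the degenerate map reduces to the constant \eqref{maps1} yet remains compatible with the closing condition, and then arranges the edges in the non-degenerate direction not to be shared generators; your WLOG reduction to the direction with nonvanishing discriminant reaches the same conclusion.
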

    \begin{proof}
        By the discussion of Section \ref{local}, if  $\Omega$ does not entirely consist of shared generators then $\Sigma$ is discrete projective minimal.
        Conversely, suppose that $\Sigma$ is discrete projective minimal. In the general case $D^1\neq0$ and $D^2\neq0$, any choice of a generic point $\bX\in Q$ gives rise to an envelope $\Omega$ since the maps $\mathcal{S}^i$ and $\mathcal{T}^i$
        in \eqref{eqs2}, \eqref{eqt2}, \eqref{eqs22} and \eqref{eqt22} are defined and compatible by virtue of the algebraic minimality condition. Due to the genericity of $\bX$, the edges of $\Omega$ are not shared generators. If, for instance, $D^1=0$ but $D^2\neq0$ then the map $\mathcal{S}^1$ simplifies to (cf.\ \eqref{eqs2})
\begin{equation}\label{maps1}
  \mathcal{S}^1(s) = s_1 = \frac{\alpha^1\beta^2p-\alpha^0\beta^3}{2\alpha^3\beta^2 p}
\end{equation}
(which is, in fact, independent of $s$), provided that
\begin{equation}\label{maps}
  s\neq \frac{\alpha^1\beta^2p-\alpha^0\beta^3}{2\alpha^3\beta^2}
\end{equation}
and the maps $\mathcal{S}^i$ and $\mathcal{T}^i$ are still compatible. Even if the condition \eqref{maps} is violated then we may regard \eqref{maps1} as a definition of the map $\mathcal{S}^1$ (with the associated tangency condition \eqref{eqtang4} being satisfied) and the maps $\mathcal{S}^i$ and $\mathcal{T}^i$ remain compatible since the constraint \eqref{maps} does not enter the compatibility condition. In this case, both $\bX$ and $\bX_1$ lie on the generator shared by $Q$ and $Q_1$ but regardless of whether \eqref{maps} holds or not, since the (initial) parameter $t$ is arbitrary, we may choose it in such a manner that the edges of the corresponding envelope $\Omega$ in $n_2$ direction are not shared generators.
\end{proof}

    \begin{remark}\label{remgenenv}
        If we refer to envelopes which do not consist entirely of
        shared generators as being generic, then the above theorem
        implies that, in general, discrete projective minimal
        surfaces admit a two-parameter family of generic envelopes.
    \end{remark}
While Theorem \ref{thmgeopm} characterises all discrete projective minimal surfaces, there also exist surfaces with an associated set of lattice Lie quadrics which
admit envelopes but may not be discrete projective minimal.
    \begin{definition}\label{defqs}
        A discrete Q surface is a discrete asymptotic net which admits an envelope $\Omega$ whose (extended) edges are shared generators of the associated lattice Lie quadrics so that the coordinate polygons of $\Omega$ are straight lines.
    \end{definition}
    \begin{figure}[h]
        \centering
        \includegraphics[scale=1]{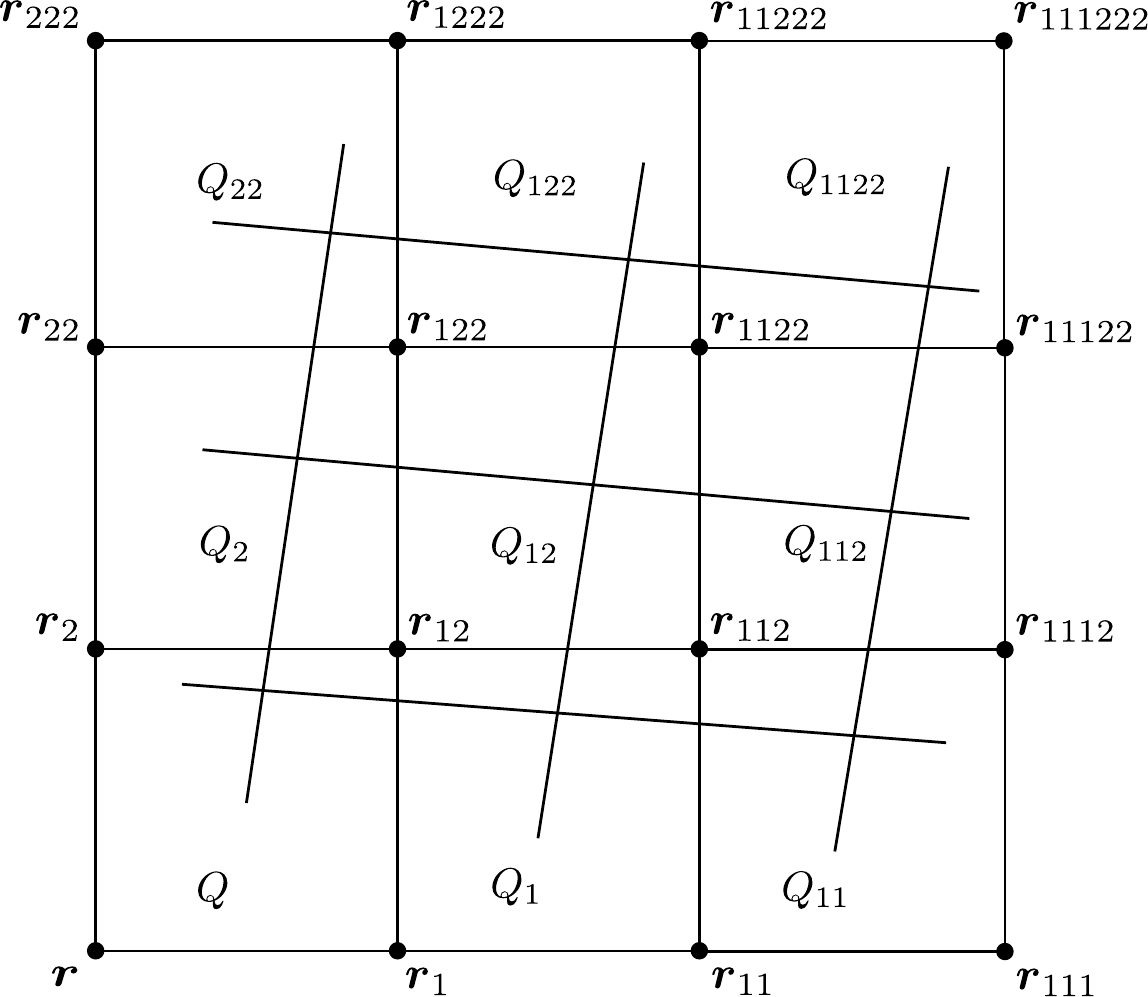}
        \caption{A discrete Q surface and the associated envelope}
        \label{figqsurface}
    \end{figure}
    \begin{remark}
        It follows from the definition of a Q surface that each straight coordinate polygon of the given envelope is a generator common to all quadrics along a strip of the
        Q surface.
        In analogy with the continuous theory, this envelope is a discrete quadric as it consists of two $1$-parameter (discrete) families of lines. In fact, it is evident that their exists a unique
(continuous) quadric which passes through those lines.
        If a discrete Q surface admits an envelope which is not entirely made up of shared generators then, by Theorem \ref{thmgeopm}, it is
        discrete projective minimal. Thus, the classes of discrete projective minimal and discrete Q surfaces are, {\it a priori}, not disjoint.
    \end{remark}
In order to establish in what sense the class of discrete PMQ
surfaces consist only of discrete projective minimal and discrete
Q surfaces, we make use of the fact that if there exists an envelope
whose edges along a strip are shared generators then these shared
generators are identical. Hence, by the homogeneity condition, if
two quadrics $Q$ and $Q_1$ have a (coinciding) shared generator which
forms part of an envelope then all quadrics along that strip are
assumed to have the same (coinciding) generator. Then, as a result
of Theorem \ref{thmgeopm}, Definition \ref{defqs} and the
homogeneity condition, we can draw the following conclusion.
    \begin{corollary}
        The class of discrete PMQ surfaces consists of discrete projective minimal and discrete Q surfaces.
    \end{corollary}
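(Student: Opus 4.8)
The plan is to upgrade the local, patch-by-patch analysis of Section~\ref{local} to a global statement by invoking the homogeneity condition, thereby splitting all discrete PMQ surfaces into two (possibly overlapping) classes. Let $\Sigma$ be a discrete PMQ surface and let $\Omega$ be an envelope of an associated set of lattice Lie quadrics. The edges of $\Omega$ fall into two combinatorial types: those joining a vertex $\bX\in Q$ to its neighbour $\bX_1\in Q_1$ in the $n_1$ direction, and those joining $\bX$ to its neighbour $\bX_2\in Q_2$ in the $n_2$ direction. The property ``an edge of $\Omega$ of a given type is a shared generator of the two lattice Lie quadrics it is incident to'' is a property of pairs of neighbouring quadrics of one type, so by the homogeneity condition it holds either for all edges of that type or for none; this reduces the whole problem to three cases.

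First I would dispose of the case in which some type of edge --- say every $n_1$-edge --- is \emph{not} a shared generator (this includes the case where no edge of either type is a shared generator). Then on every $3\times 3$ patch the quadrilateral of $\Omega$ fails to consist entirely of shared generators, so the relevant branch of the case analysis of Section~\ref{local} applies and, equivalently, Theorem~\ref{thmgeopm} gives that $\Sigma$ is a discrete projective minimal surface. It then remains to treat the case in which \emph{both} types of edge are shared generators throughout. Here I would first record the elementary fact (already used in the text preceding the corollary) that the shared generators along a strip coincide: along a strip in the $n_1$ direction $\ldots,Q_{\bar 1},Q,Q_1,\ldots$ the edges $[\bX_{\bar 1},\bX]$ and $[\bX,\bX_1]$ are generators of $Q$ through the generic point $\bX\in Q$, and a short computation from the parametrisation~\eqref{eqqparam} shows that both belong to the same ruling of $Q$, namely the one transversal to the two common edges of the quadrilateral carrying $Q$; since a generic (non-vertex) point of a quadric lies on exactly one generator of each ruling, these two generators agree. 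Iterating along the strip, the $n_1$-coordinate polygon of $\Omega$ is a single straight line, and likewise in the $n_2$ direction. Hence $\Omega$ is a dual lattice whose coordinate polygons are straight lines built from shared generators, i.e., $\Sigma$ is a discrete Q surface in the sense of Definition~\ref{defqs}. Since these cases are exhaustive, every discrete PMQ surface is discrete projective minimal or discrete Q, which is the assertion.

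The part I expect to require the most care is not the case split itself but the bookkeeping behind it: one must check that the list of cases in Section~\ref{local} is genuinely exhaustive once the homogeneity condition has excluded ``hybrid'' envelopes, and one must keep track of the genericity hypotheses, since the argument that two generators of the same ruling coincide breaks down precisely at the vertices connected by a common edge --- so it is essential that the vertices of $\Omega$ are generic points, which is guaranteed by the fact that they are produced by the tangency maps $\mathcal{S}^i,\mathcal{T}^i$ acting on generic points. Everything else is routine in light of Theorem~\ref{thmgeopm}, Definition~\ref{defqs}, and the already-established behaviour of shared generators under the $\mathcal{C}^1$ and tangency conditions.
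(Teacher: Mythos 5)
Your proposal is correct and follows essentially the same route as the paper: the preceding discussion in the text likewise combines the homogeneity condition, Theorem~\ref{thmgeopm}, Definition~\ref{defqs}, and the fact that shared generators forming an envelope's edges along a strip must coincide (which you justify explicitly via the ruling argument, whereas the paper merely asserts it). No substantive difference.
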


\section{Discrete semi-Q, complex, doubly Q and doubly complex surfaces}
We now investigate in more detail the geometric nature of Q surfaces and classes intimately related to Q surfaces. The elementary building block of the notion
of a Q surface is the notion of a semi-Q surface.                                                                                                                                                                                                                                                                                                                                                                                                                                                                                                                                                                                                                                                                                                                                                                                                                                                                                                                                                                                                                                                                                                                                                                                                                                                                                                                                                                                                                                                                                                                                                                                                                                                                                                                                                                                                                                                                                                                                                                                                                                                                                %
    \begin{definition}
        A discrete asymptotic net is said to be a discrete semi-Q 
        surface if it admits a set of lattice Lie quadrics such
        that quadrics associated with each strip of a given type share a generator, as in the top left of Figure~\ref{figqsurfaces}.
        We call the direction ($n_1$ or $n_2$) along which quadrics share a
        generator the semi-Q direction.
    \end{definition}

We now examine surfaces which are semi-Q in more
than one way.
\subsection{Discrete complex, doubly Q and doubly complex surfaces}
    \begin{definition}
        As illustrated in Figure \ref{figqsurfaces}, a discrete asymptotic net with an associated set of lattice Lie quadrics
        is said to be a discrete
            \begin{enumerate}[label=(\roman*)]
                \item
                complex surface if all quadrics on each strip of a given type share two (possibly coinciding) generators,
                that is, if it is doubly semi-Q in one direction.
                \item
                doubly Q surface if it is a discrete complex surface
                with respect to one direction and a discrete semi-Q
                surface with respect to the other direction, that is,
                if it is doubly semi-Q in one direction and semi-Q
                in the other.
                \item
                doubly complex surface if it is a discrete complex surface
                in both directions, that is, if it is doubly semi-Q
                in both directions.
            \end{enumerate}
    \end{definition}

\begin{remark}
 By definition, discrete Q surfaces are discrete asymptotic nets which are semi-Q in both directions.
\end{remark}
    \begin{figure}[h]
        \centering
        \includegraphics[scale=0.75]{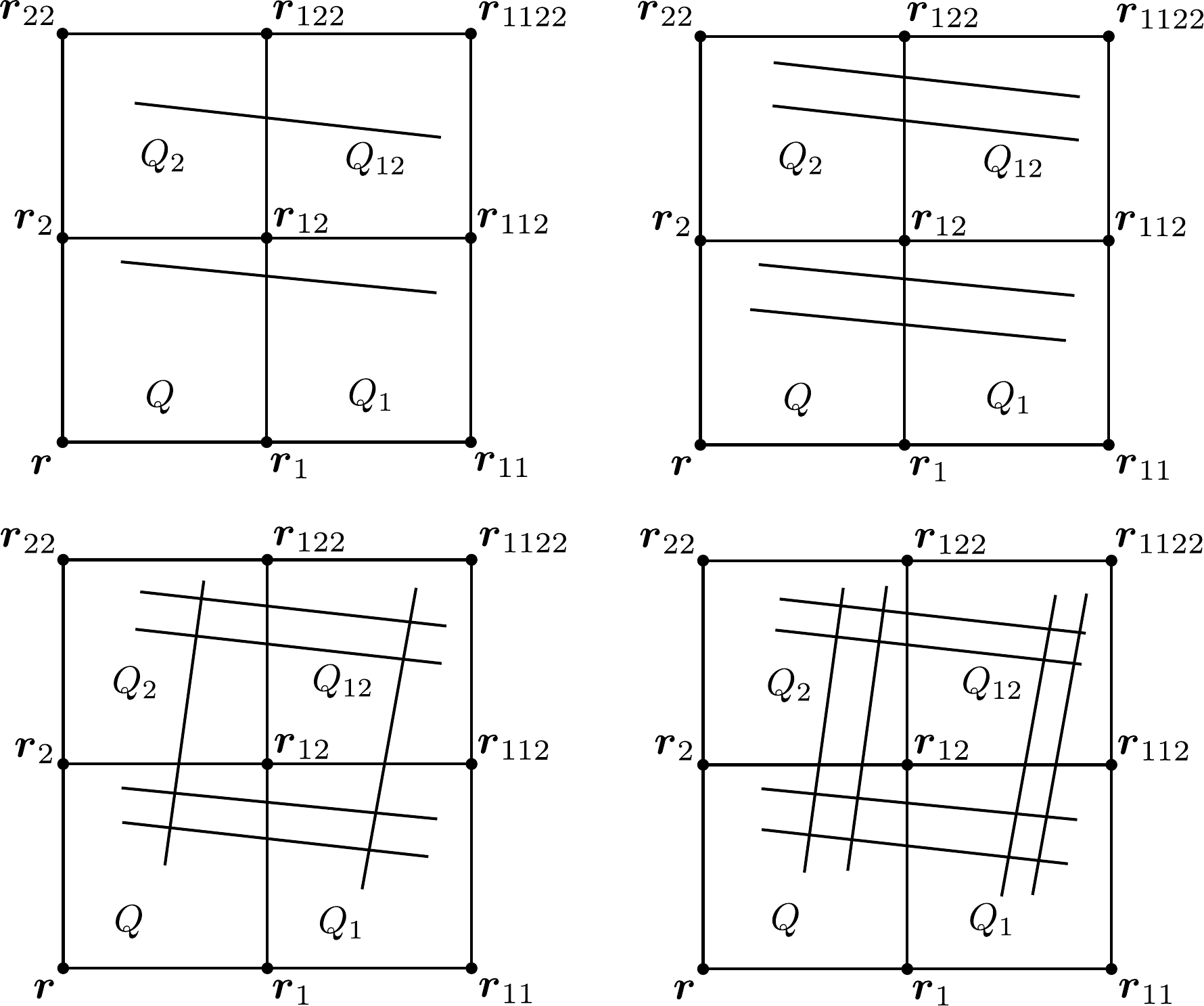}
        \caption{Discrete semi-Q (t-l), complex (t-r), doubly Q (b-l) and doubly complex (b-r) surfaces}
        \label{figqsurfaces}
    \end{figure}
    As in the continuous theory, it turns out that not all of these classes of surfaces are
    distinct. Firstly, we need the following well-known fact.
    \begin{lemma}\label{lem3pts}
        Let $\hat{Q}$ be a quadric, $L$ a line. Then,
            \begin{enumerate}[label=(\roman*)]
                \item
                if  $L$ intersects $\hat{Q}$ at three points then it is a generator of $\hat{Q}$,
                \item
                if $L$ intersects $\hat{Q}$ at a point and touches $\hat{Q}$ at a different point then it is a generator of $\hat{Q}$.
            \end{enumerate}
    \end{lemma}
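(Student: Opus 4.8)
The plan is to reduce both statements to the elementary fact that a binary quadratic form which is not identically zero has at most two zeros on $\mathbb{P}^{1}$ counted with multiplicity. Represent $\hat{Q}$ as the zero locus of a quadratic form $\Phi(\mathbf{x})=\mathbf{x}^{T}A\mathbf{x}$ on the ambient $\mathbb{R}^{4}$, with associated symmetric bilinear form $B(\mathbf{x},\mathbf{y})=\mathbf{x}^{T}A\mathbf{y}$, and parametrise the line $L$ by $\mathbf{x}(\lambda,\mu)=\lambda\mathbf{a}+\mu\mathbf{b}$ for two distinct points $[\mathbf{a}],[\mathbf{b}]\in L$. Then

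\begin{equation*}
  \phi(\lambda,\mu):=\Phi(\mathbf{x}(\lambda,\mu))=\lambda^{2}\Phi(\mathbf{a})+2\lambda\mu\,B(\mathbf{a},\mathbf{b})+\mu^{2}\Phi(\mathbf{b})
\end{equation*}

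is a binary quadratic form in $(\lambda,\mu)$, and the points of $L\cap\hat{Q}$ correspond exactly to its zeros in $\mathbb{P}^{1}$. The governing dichotomy is: either $\phi\equiv 0$, in which case every point of $L$ lies on $\hat{Q}$ and $L$ is by definition a rectilinear generator of $\hat{Q}$; or $\phi\not\equiv 0$, in which case it has at most two zeros counted with multiplicity.

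For part (i) I would simply observe that three distinct points of $L\cap\hat{Q}$ furnish three distinct zeros of $\phi$, which forces $\phi\equiv 0$, hence $L$ is a generator. For part (ii) I first record the standard translation of ``touching'' into a multiplicity statement. Writing the assumed transversal intersection point as $P=[\mathbf{p}]$ and choosing $\mathbf{a}=\mathbf{p}$, so that $\Phi(\mathbf{a})=0$, one gets $\phi(\lambda,\mu)=\mu\bigl(2\lambda\,B(\mathbf{p},\mathbf{b})+\mu\,\Phi(\mathbf{b})\bigr)$; thus $\phi$ has a double zero at the parameter value $[1:0]$ representing $P$ precisely when $B(\mathbf{p},\mathbf{b})=0$, i.e.\ precisely when the direction of $L$ lies in the polar hyperplane $\{\mathbf{y}:B(\mathbf{p},\mathbf{y})=0\}$ of $P$ -- which at a nonsingular point of $\hat{Q}$ is the tangent plane $T_{P}\hat{Q}$. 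Hence ``$L$ touches $\hat{Q}$ at $P$'' contributes a zero of multiplicity $\ge 2$, and the additional distinct point of $L\cap\hat{Q}$ contributes a further zero, so $\phi$ has total multiplicity $\ge 3$; again $\phi\not\equiv 0$ is impossible, so $L$ is a generator.

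I do not expect a genuine obstacle here, since this is the classical ``three points, or a tangency together with another point, forces the line into the quadric'' criterion. The only point needing a word of care is the meaning of ``touches'' when $\hat{Q}$ is a possibly degenerate quadric: tangency should be read at a nonsingular point of $\hat{Q}$ or, equivalently and without reference to smoothness, via the polar condition $B(\mathbf{p},\cdot)=0$ used above. With this reading the argument is uniform and independent of the rank of $A$, which is exactly the generality needed for the lattice Lie quadrics appearing later.
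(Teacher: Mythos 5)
Your proof is correct. Note that the paper itself offers no proof of this lemma --- it is introduced with the words ``we need the following well-known fact'' and used as a black box in Theorem \ref{thm2is4} --- so there is nothing to compare against line by line. Your argument (restrict the quadratic form $\Phi$ to the line to obtain a binary quadratic form $\phi$ in $(\lambda,\mu)$, observe that a nonzero such form has at most two zeros on $\mathbb{P}^1$ counted with multiplicity, and check that three transversal intersections, or a tangency plus a further intersection, would force at least three) is the standard one and is complete. Your closing remark is also the right one to make: encoding tangency by the polar condition $B(\mathbf{p},\cdot)=0$ rather than by smoothness makes the statement and proof uniform in the rank of $A$, which is harmless here since the lattice Lie quadrics to which the lemma is applied are generically nondegenerate, and which in the degenerate case (e.g.\ a cone, where $B(\mathbf{p},\cdot)\equiv 0$ at the vertex) still yields the correct conclusion that the line lies on the quadric.
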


    \begin{theorem}\label{thm2is4}
        A discrete doubly Q surface is doubly complex.
    \end{theorem}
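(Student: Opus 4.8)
The plan is to reduce the statement to the classical fact that three mutually skew lines lie on a unique quadric (the relevant quantitative form of which is Lemma \ref{lem3pts}), and to exhibit that quadric explicitly as a circumscribed quadric. Assume, without loss of generality, that $\Sigma$ is complex with respect to the $n_1$-direction and semi-Q with respect to the $n_2$-direction; the task is to upgrade ``semi-Q'' to ``complex'' in the $n_2$-direction. By hypothesis, for each fixed $n_2=j$ the quadrics of the $n_1$-strip share two common generators $L_j,L_j'$ which, being transversal to the common edges (Theorem \ref{thmsharegen1}), lie in the ``$n_1$-ruling'' of each quadric of that strip; and for each fixed $n_1=n$ the quadrics of the $n_2$-strip share one common generator $M_n$, lying in the ``$n_2$-ruling''. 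Hence in a single quadric $Q=Q(n,j)$ of $\Sigma$ all three lines $L_j,L_j',M_n$ are present, with $L_j,L_j'$ in one ruling and $M_n$ in the other, so that $M_n$ meets each of $L_j,L_j'$.

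The first step is to produce a quadric $\hat Q$ carrying the whole configuration. Since $L_j$ and $M_n$ are generators of opposite rulings of $Q(n,j)$ they intersect, and this holds for every pair $(j,n)$, and likewise for every $L_j'$. Choosing (by the homogeneity/genericity principle of Section \ref{local}) three mutually skew members $M_{n_0},M_{n_0+1},M_{n_0+2}$, let $\hat Q$ be the quadric they determine. Each $L_j$ meets all three of them in distinct points (distinct because the $M$'s are skew), hence is a generator of $\hat Q$ by Lemma \ref{lem3pts}(i); the same argument applies to each $L_j'$. Running this backwards, each $M_n$ meets at least three of the $L_j$'s, so $M_n\subset\hat Q$ as well. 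Thus $\hat Q$ contains every $M_n$ in one ruling and every $L_j$ and $L_j'$ in the other.

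The second step pins down a second common generator of each $n_2$-strip. Fix $n,j$ and put $Q=Q(n,j)$; then $Q$ and $\hat Q$ share the three generators $L_j,L_j',M_n$. Since $\Sigma$ is not a lattice on a quadric (the generic case), $Q\neq\hat Q$, and two distinct quadrics meet in a curve of degree four, so $Q\cap\hat Q$ consists of $L_j,L_j',M_n$ together with exactly one further line $N_{(n,j)}$. This extra line cannot lie in the $n_1$-ruling of $Q$, for three mutually skew lines common to $Q$ and $\hat Q$ would force $Q=\hat Q$; hence $N_{(n,j)}$ lies in the $n_2$-ruling of $Q$. Now apply Corollary \ref{corsharegen1} to the $n_2$-neighbours $Q(n,j)$ and $Q(n,j+1)$: they share two $n_2$-ward generators, one of which is $M_n$; call the other $M'$. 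As an $n_2$-generator of both quadrics, $M'$ meets the $n_1$-generators of each, in particular $L_j,L_j'$ and $L_{j+1}$, which are three mutually skew generators of $\hat Q$; therefore $M'\subset\hat Q$ by Lemma \ref{lem3pts}(i). Being a common $n_2$-ward generator of $Q(n,j)$ and $\hat Q$ different from $M_n$, we get $M'=N_{(n,j)}$, and the same reasoning for $Q(n,j+1)$ gives $M'=N_{(n,j+1)}$. Hence $N_{(n,j)}$ is independent of $j$: it is a single line $N_n$, a generator of every $Q(n,k)$ and distinct from $M_n$, so the $n_2$-strip at $n$ has two common generators. Consequently $\Sigma$ is complex with respect to the $n_2$-direction as well, i.e.\ doubly complex.

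The main obstacle is the handling of the degenerate configurations that the homogeneity principle is designed to exclude: when the two shared generators of a strip coincide ($D^1\equiv 0$ or $D^2\equiv 0$ along a strip) one has only one line per level, but the corresponding quadrics then \emph{touch} along it (Corollary \ref{cortouch}), so the intersection counts above must be run with multiplicities and Lemma \ref{lem3pts}(ii) used in place of (i); one must also cover the case in which $\{M_n\}$ contains no three mutually skew lines, in which case it still lies on a (possibly singular) quadric and the argument degenerates accordingly. Granting genericity, the construction of $\hat Q$ and the identification $M'=N_{(n,j)}=N_{(n,j+1)}$ are the whole of the proof.
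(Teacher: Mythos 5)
Your opening move coincides with the paper's: the circumscribed quadric $\hat Q$ is exactly the paper's auxiliary quadric through $W,W_1,W_{11}$ (your $M_{n_0},M_{n_0+1},M_{n_0+2}$), and placing all the $L_j,L_j'$ (the paper's $U,V$ and their $n_2$-shifts) on $\hat Q$ via Lemma \ref{lem3pts}(i) is the same first step. You diverge in how the second shared generator is propagated along an $n_2$-strip. The paper argues purely with incidences: the second generator $G$ shared by $Q$ and $Q_2$ meets $U,V,U_2,V_2$, hence lies on $\hat Q$; it then meets $U_{\bar{2}}$, $V_{\bar{2}}$ and the common edge $[\mb{r},\mb{r}_{1}]$, hence lies on $Q_{\bar{2}}$ by Lemma \ref{lem3pts}(i) applied to $Q_{\bar{2}}$ itself, and so on along the strip. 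Your route instead identifies that generator with the residual line of the degree-four curve $Q(n,j)\cap\hat Q$; this makes the independence of $j$ transparent, but it imports two hypotheses the paper never needs --- that $Q(n,j)\neq\hat Q$ and that the intersection is reduced --- and in the non-reduced case (e.g.\ $\hat Q$ tangent to $Q$ along $M_n$) the ``unique further line'' $N_{(n,j)}$ does not exist. In the generic situation your argument is correct and essentially equivalent to the paper's.

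The substantive gap is the coinciding case $L_j=L_j'$ (the paper's $U=V$), which is explicitly allowed by the definition of a complex surface (``two (possibly coinciding) generators'') and occupies half of the paper's proof as a separate case. Your closing sentence names the right tools (Corollary \ref{cortouch}, Lemma \ref{lem3pts}(ii)) but omits the step that makes them applicable: before one can conclude that the second shared generator $M'$ lies on $\hat Q$ from the fact that it meets only the single line $L_j$ and its shift, one must show that $\hat Q$ \emph{touches} $Q(n,j)$ along the double generator $L_j$. The paper proves this by observing that $Q$ and $\hat Q$ share tangent planes at the three points $\bH,\bH_1,\bH_{11}$ where $U$ meets $W,W_1,W_{11}$ (using Corollary \ref{cortouch} for the neighbouring quadrics), whence $\hat Q$ touches $Q$ along all of $U$; only then does it follow that $M'$ touches $\hat Q$ at its intersections with $U$ and $U_2$, giving $M'\subset\hat Q$ by Lemma \ref{lem3pts}(ii), and, by the same tangency transported to $Q_{\bar{2}}$, $M'\subset Q_{\bar{2}}$. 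Without that intermediate tangency statement the multiplicity count you gesture at cannot be started, so this case needs to be written out rather than granted.
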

    \begin{proof}
        Without loss of generality, we assume that the surface is
        discrete semi-Q in the $n_2$ direction and discrete complex
        in the $n_1$ direction. Let $\hat{Q}$ be the unique quadric
        passing through three neighbouring shared generators $W$, $W_1$ and $W_{11}$ as in Figure \ref{fig3strip}. Denote by $U$ and $V$ the generators common
        to the quadrics of the $n_1$ strip containing the quadric $Q$ as in Figure \ref{fig3strip}.
        Since $U$ and $V$ and
        their shifts in the $n_2$ direction intersect $W$, $W_1$ and $W_{11}$, by Lemma~\ref{lem3pts}, $U$, $V$ and their $n_2$
        shifts are all generators of $\hat{Q}$.
        Now, consider the three quadrics $Q,\,Q_2,$ and $Q_{\bar{2}}$ in an $n_2$ strip as in
        Figure \ref{fig3strip}. We treat the cases of coinciding and non-coinciding generators in the $n_1$ direction separately.
            \begin{enumerate}[label=(\roman*)]
                \item
                    Suppose firstly that $U\neq V$. We know by Theorem
                    \ref{thmsharegen1} that $Q$ and $Q_{2}$ have a second shared
                    generator $G$ (which may or may not coincide with $W$). $G$ intersects $U_2$, $V_2$, $U$ (and $V$) and, hence, by Lemma \ref{lem3pts}, $G$ is a
                    generator of $\hat{Q}$.
                    Thus, by extension, $G$ also intersects $U_{\bar{2}}$ and $V_{\bar{2}}$. Moreover, since the
                    edge $[\mb{r},\mb{r}_{1}]$ is a generator of $Q_{\bar{2}}$
                    and since $G$ intersects this edge as well and, thus, intersects three generators of $Q_{\bar{2}}$, by Lemma
                    \ref{lem3pts}, $G$ is also a generator of $Q_{\bar{2}}$.
                \item
                    Suppose now that $U=V$. Denote by $\bH$ the point of intersection of $U$ and $W$, by $\bH_1$ the point
                    of intersection of $U$ and $W_1$ etc. Then, since the tangent
                    plane of $Q$ at $\bH$ is spanned by $U$ and $W$, $Q$ and $\hat{Q}$ touch at $\bH$. Similarly, $\hat{Q}$ and $Q_1$ touch at $\bH_1$ and $\hat{Q}$ and
                    $Q_{11}$ touch at
                    $H_{11}$. Moreover, by Corollary \ref{cortouch}, the tangent planes of $Q$ at $\bH_1$ and $\bH_{11}$ coincide with those of $Q_1$ and $Q_{11}$
                    respectively.
                    Consequently, the tangent planes of $\hat{Q}$ and $Q$ coincide at the three points $\bH$, $\bH_1$ and $\bH_{11}$ and, therefore, $\hat{Q}$
                    touches $Q$ along $U$.
                    Then, since $G$ is a generator of $Q$ and $Q_2$, it intersects $U$ and $U_2$ at points $\bK$ and $\bK_2$, say, and, hence, touches $\hat{Q}$ at $\bK$
                    and $\bK_2$. Thus, by Lemma \ref{lem3pts}, $G$ is a generator of $\hat{Q}$. By extension, $G$
                    intersects $Q_{\bar{2}}$
                    at a point on the edge $[\mb{r},\mb{r}_{1}]$ and touches $Q_{\bar{2}}$ at $\bK_{\bar{2}}$ (being the intersection of $G$ and $U_{\bar{2}}$) since $\hat{Q}$ touches $Q_{\bar{2}}$ along $U_{\bar{2}}$. Thus,
                    $G$ is a generator of $Q_{\bar{2}}$.
            \end{enumerate}\vspace{-8mm}

    \end{proof}
    \begin{figure}[h]
        \centering
        \includegraphics[scale=1]{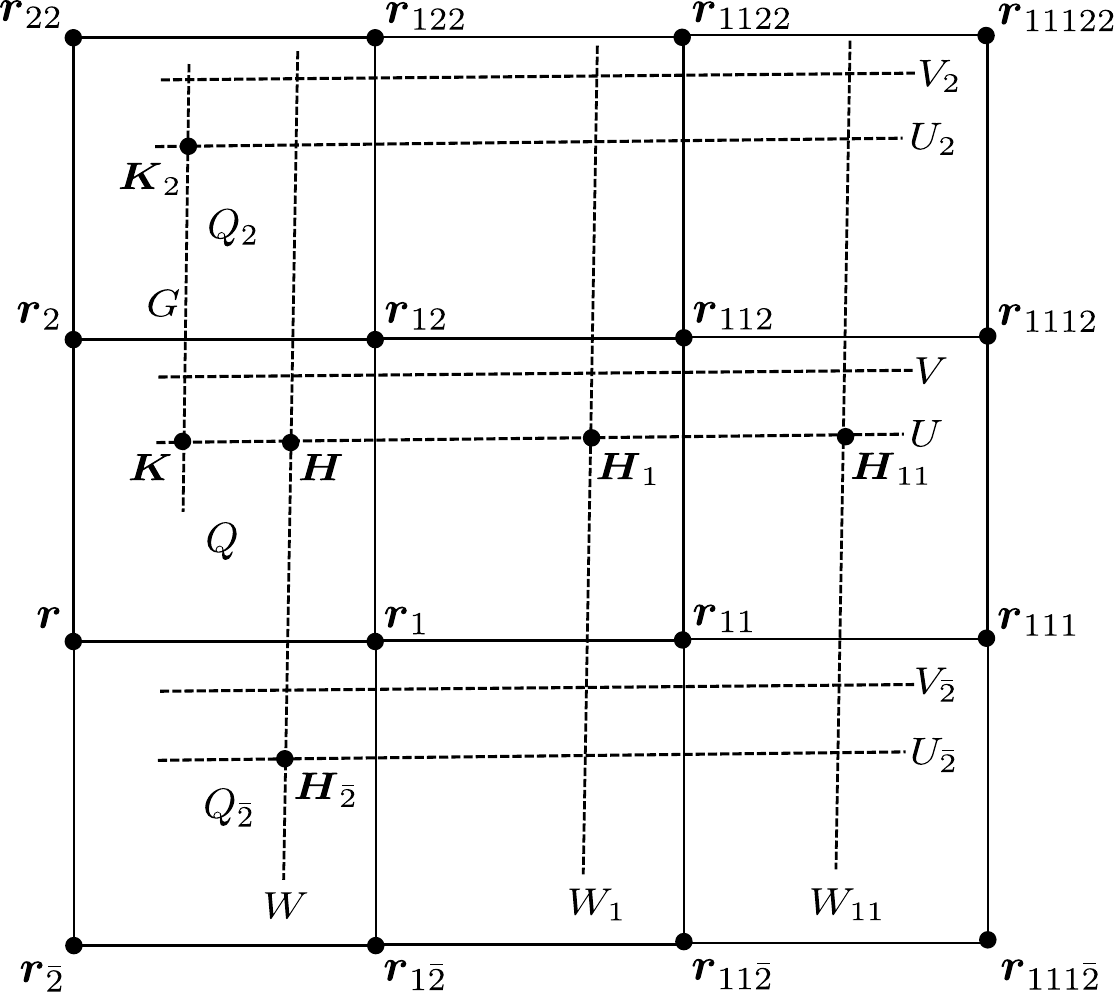}
        \caption{A $4\times 4$ patch of a discrete doubly Q surface}
        \label{fig3strip}
    \end{figure}

\section{Discrete Godeaux-Rozet, Demoulin and\\ Tzitz\'eica surfaces}\label{secgr}
We now turn to an investigation of special types of discrete projective minimal surfaces and define discrete analogues of Godeaux-Rozet, Demoulin
and Tzitz\'eica surfaces. 

\subsection{Discrete Godeaux-Rozet and Demoulin surfaces}

In view of the classification of Section \ref{seclclass}, we propose the following algebraic definition.
    \begin{definition}\label{defngr}
        A discrete asymptotic net $\Sigma$ is a
            \begin{enumerate}[label=(\roman*)]
                \item
                discrete Godeaux-Rozet surface if $D^1=0$ or $D^2=0$.
                \item
                discrete Demoulin surface if
                $D^1=D^2=0$.
            \end{enumerate}
    \end{definition}
    Let $\Sigma$ be a discrete PMQ surface and $\Omega$ an envelope
    corresponding to a set of lattice Lie quadrics of $\Sigma$.
    We may associate with each vertex $\bX$ of $\Omega$ four lines passing through $\bX$ and the four vertices of the corresponding quadrilateral of
    $\Sigma$.
    Taking one of these lines and its shifts along the lattice generates a discrete line congruence. Hence, there exist four such discrete line congruences
    $\mathscr{L}$, $\mathscr{L}^1$, $\mathscr{L}^2$ and $\mathscr{L}^{12}$ as displayed in Figure \ref{figlineconarrows}.
    A priori, generically, there exists a two-parameter family of each line
    congruence generated by moving the point $\bX$ around on a fixed lattice Lie quadric.
    \begin{figure}[h]
        \centering
        \includegraphics[scale=1]{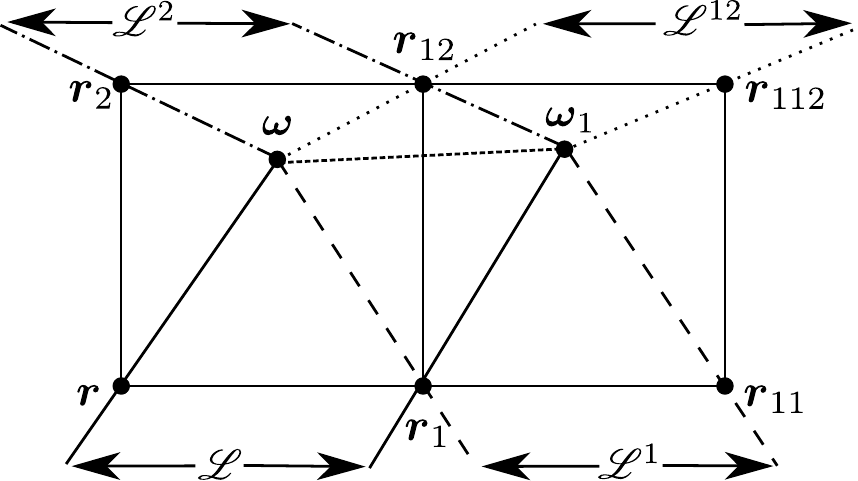}
        \caption{Four types of line congruences}
        \label{figlineconarrows}
    \end{figure}

In order to produce a geometric characterisation of discrete Godeaux-Rozet and Demoulin surfaces in terms of these line congruences, it is convenient to introduce the following definition.

\begin{definition}
A discrete line congruence $\mathscr{L}^*=\{l(n_1,n_2)\}$ is said to
have the intersection property in the direction $n_1$ (or $n_2$) if
neighbouring lines in the $n_1$ (or $n_2$) direction intersect, that
is, $l\cap l_1\neq\emptyset$ (or $l\cap l_2\neq\emptyset)$.
\end{definition}

\begin{theorem}\label{thmgeogr1}
        Let $\Sigma$ be a discrete projective minimal surface with an associated discrete envelope $\Omega$ such that the edges of $\Omega$ in the $n_1$
        direction are not shared generators of the lattice Lie quadrics of $\Sigma$.
        Then, $D^1=0$ if and only if there exists a line congruence which possesses the intersection property in the $n_1$ direction. Moreover, if $\mathscr{L}$ (or $\mathscr{L}^1$)
        admits such an intersection property then so does $\mathscr{L}^2$ (or $\mathscr{L}^{12}$) and vice versa. These statements apply, {\it mutatis mutandis}, if one considers the direction $n_2$.
    \end{theorem}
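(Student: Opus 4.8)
The plan is to turn the intersection property into a coplanarity condition in homogeneous coordinates and reduce the whole statement to four explicit $4\times 4$ determinants that can be evaluated with the frame equations.

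First I would set up notation on the $3\times 2$ patch carrying $Q$ and $Q_1$: write the relevant vertex of $\Omega$ as $\bX=Q(s,t)$ and its $n_1$-shift as $\bX_1=Q_1(s_1,t_1)$, where $s_1=\mathcal S^1(s)$, $t_1=\mathcal T^1(t)$ are the tangency maps \eqref{eqs2}, \eqref{eqt2} and $p_1=\alpha^0/(\beta^2 p)$ is the $\mathcal C^1$ value \eqref{eqp11}. Since two lines in $\mathbb P^3$ meet precisely when the four points spanning them are linearly dependent, the $n_1$-intersection property of $\mathscr L,\mathscr L^1,\mathscr L^2,\mathscr L^{12}$ is, at each quadrilateral of the $n_1$-strip, the vanishing of $\det[\bX,\bX_1,\mb r,\mb r_1]$, $\det[\bX,\bX_1,\mb r_1,\mb r_{11}]$, $\det[\bX,\bX_1,\mb r_2,\mb r_{12}]$, $\det[\bX,\bX_1,\mb r_{12},\mb r_{112}]$ respectively, and by the homogeneity convention it suffices to examine one quadrilateral. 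Expanding $\bX$ and $\bX_1$ in the basis $\{\mb r,\mb r_1,\mb r_2,\mb r_{12}\}$ via \eqref{eqq11param}, and writing $\mb r_{11}=\alpha^0\mb r+\alpha^1\mb r_1+\alpha^3\mb r_{12}$, $\mb r_{112}=\beta^1\mb r_1+\beta^2\mb r_2+\beta^3\mb r_{12}$ from \eqref{eqframe1}, each determinant becomes a rational function of $s,t$ and the frame coefficients.

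The core step is the (routine but lengthy) simplification of these four expressions, using the explicit forms of $\mathcal S^1,\mathcal T^1,p_1$ and the identity $2\alpha^3\beta^2 s^{*}=\alpha^1\beta^2 p-\alpha^0\beta^3$ for $s^{*}$ as in \eqref{eqsshare1}. I expect it to yield, up to nonvanishing products of structure coefficients and powers of $p$,
$$\det[\bX,\bX_1,\mb r,\mb r_1]\ \propto\ \frac{t\,D^1}{s-s^{*}},\qquad \det[\bX,\bX_1,\mb r_2,\mb r_{12}]\ \propto\ \frac{s^2 t\,D^1}{s-s^{*}},$$
$$\det[\bX,\bX_1,\mb r_1,\mb r_{11}]\ \propto\ t\,(s-s^{*}),\qquad \det[\bX,\bX_1,\mb r_{12},\mb r_{112}]\ \propto\ \frac{t\big(\tfrac12(\alpha^1\beta^2 p-\alpha^0\beta^3)s+\alpha^0\beta^1 p\big)^2}{s-s^{*}},$$
with $D^1$ the discriminant \eqref{eqdisc}; a useful device is that $\det[\bX,\bX_1,\mb r,\mb r_1]=t\,(\bX_1)_{\mb r_{12}}-p\beta^2 p_1$, and that the $\mathscr L^1$- and $\mathscr L^{12}$-determinants arise from the $\mathscr L$- and $\mathscr L^2$-determinants by adding $\alpha^3$-, $\alpha^0$- or $\beta^1$-multiples of minors already computed. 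Because the hypothesis that the $n_1$-edges of $\Omega$ are not shared generators forces $\bX$ to be generic, so $t\neq0$, $s\neq s^{*}$ (and $s\neq0$), the $\mathscr L^1$-determinant never vanishes, while the last factor in the $\mathscr L^{12}$-determinant vanishes only when $D^1=0$, in which case it becomes a nonzero multiple of $(s-s^{*})^2$ and again does not vanish. Hence under the stated hypothesis neither $\mathscr L^1$ nor $\mathscr L^{12}$ can have the $n_1$-intersection property, whereas $\mathscr L$ has it $\iff D^1=0 \iff \mathscr L^2$ has it, the common vanishing along the whole strip being equivalent, by homogeneity, to $D^1\equiv 0$. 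This proves the equivalence and the coupled statement for $\mathscr L,\mathscr L^2$; a unit shift of the origin in $n_1$ (interchanging $Q$ and $Q_1$) relabels $\mathscr L\leftrightarrow\mathscr L^1$, $\mathscr L^2\leftrightarrow\mathscr L^{12}$ and accounts for the parenthetical alternative. The $n_2$ statement follows from the mirror computation, with $(\alpha^i,\beta^i)$, $(\mathcal S^1,\mathcal T^1)$, $D^1$, $(\mb r_1,\mb r_{11},\mb r_{112})$ replaced by $(\gamma^i,\delta^i)$, $(\mathcal T^2,\mathcal S^2)$, $D^2$, $(\mb r_2,\mb r_{22},\mb r_{122})$.

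For the implication ``$D^1=0\Rightarrow$ intersection property'' it is cleaner to argue geometrically. If $D^1=0$ then by Corollary \ref{corsharegen1}(iii) and Corollary \ref{cortouch} the quadrics $Q,Q_1$ touch along a common generator $G$, and by Remark \ref{remarsgen} (the generic vertex $\bX$ lying off $G$) the tangency condition forces $\bX_1\in G\subset Q$; since the edge $\bX\bX_1$ of $\Omega$ is tangent to $Q$ at $\bX$ and meets $Q$ again at $\bX_1$, it is the generator of $Q$ through $\bX$ that is transversal to $G$, so $\bX$ and $\bX_1$ lie on a common generator $g$ of $Q$. The two $n_1$-edges $[\mb r,\mb r_1]$ and $[\mb r_2,\mb r_{12}]$ of the quadrilateral $[\mb r,\mb r_1,\mb r_2,\mb r_{12}]$ are generators of $Q$ of the ruling complementary to $g$, so $(\bX,\mb r)$ and $(\bX_1,\mb r_1)$ both lie in the plane spanned by $g$ and $[\mb r,\mb r_1]$, and $(\bX,\mb r_2)$ and $(\bX_1,\mb r_{12})$ both lie in the plane spanned by $g$ and $[\mb r_2,\mb r_{12}]$; each pair of lines therefore meets. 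Repeating this at every quadrilateral of the strip (where $D^1=0$ by homogeneity) shows that $\mathscr L$ and $\mathscr L^2$ possess the intersection property in the $n_1$ direction.

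I expect the main obstacle to be the determinant simplification: verifying that the two ``corner-on-the-$n_1$-edge'' determinants collapse to multiples of $D^1$ while the other two collapse to the $(s-s^{*})$-type factors requires disciplined use of \eqref{eqs2}, \eqref{eqt2}, \eqref{eqp11}, of $2\alpha^3\beta^2 s^{*}=\alpha^1\beta^2 p-\alpha^0\beta^3$, and of the completion-of-square identity $\tfrac14 D^1/(\alpha^3\beta^2)+E(s)=\alpha^3\beta^2(s-s^{*})^2$ with $E(s)$ the quadratic in \eqref{eqsharegen}. A secondary point needing care is the homogeneity step that upgrades ``the vanishing holds at one quadrilateral'' to ``along the whole strip'' and thence to $D^1\equiv 0$, which must be checked for consistency with $\Sigma$ being discrete projective minimal via \eqref{keydisc}.
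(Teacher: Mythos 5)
Your proposal is correct and follows essentially the same route as the paper: the intersection property is encoded as linear dependence of $\bX,\bX_1,\mb{r},\mb{r}_1$ (the paper solves $f\mb{r}+g\mb{r}_{1}+h\bX+\bX_{1}=0$ componentwise and finds the $\mb{r}_{12}$-component is exactly $D^1=0$, while you evaluate the equivalent $4\times 4$ determinant, which indeed collapses to $tD^1/(4\alpha^3(\beta^2)^2p(s-s^*))$), and your additional determinants for $\mathscr{L}^1,\mathscr{L}^{12}$ reproduce what the paper defers to Remark~\ref{remgr}. One small imprecision: the hypothesis only forces \emph{one} of $\bX,\bX_1$ off the shared generator, so ``$s\neq s^*$'' is a without-loss-of-generality normalisation rather than a consequence (the paper states it as such), and the complementary case — obtained by swapping the roles of $\bX$ and $\bX_1$ in the computation, not literally by a lattice shift, which leaves the congruences $\mathscr{L},\mathscr{L}^1$ as sets of lines unchanged — is what produces the parenthetical $\mathscr{L}^1/\mathscr{L}^{12}$ alternative.
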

    \begin{proof}
        Consider the patch of $\Sigma$ in Figure \ref{figgeogr}. Since the edges of $\Omega$ in the $n_1$ direction are not shared generators, given two vertices of
        $\Omega$, $\bX\in Q$ and $\bX_1\in Q_1$, it follows that either $\bX$ or $
\bX_1$ does not lie on a generator common to $Q$ and $Q_1$.
        Without loss of generality, we assume that $\bX$ does not lie on a shared generator. Then, $\bX_{1}=Q_{1}(s_{1},t_{1})$, where $s_{1}$ and $t_{1}$
        are given by \eqref{eqs2} and \eqref{eqt2}. We now consider the line congruence $\mathscr{L}$ (defined by the line through $\bX$ and \mb{r} and its shifts) and
        assume that the lines $l = [\bX,\mb{r}]$ and $l_1 = [\bX_1,\mb{r}_1]$ intersect. Their point of intersection,
        $\bI^1$, is then determined by $f\mb{r}+g\mb{r}_{1}+h
        \bX+\bX_{1}=0$, that is,
            \begin{equation}\label{eqgr2}
                \begin{split}
                    \mb{r}(f+hst+\alpha^0
                    s_{1})&+\mb{r}_{1}(g+hs+\beta^1 p_{1}+\alpha^1
                    s_{1}+s_{1}t_{1})\\
                    &+\mb{r}_{2}(h+\beta^2
                    p_{1})+\mb{r}_{12}(hp+\beta^3 p_{1}+\alpha^3
                    s_{1}+t_{1})=0.
                \end{split}
            \end{equation}
        Equating each component equal to zero then yields four conditions which may be formulated as:
            \begin{align*}
                \mb{r}:\qquad &
                f=\alpha^0\left(\frac{s}{p}-s_{1}\right)\\[2mm]
                \mb{r}_1:\qquad &
                g=\frac{\alpha^0\beta^1}{\beta^2 p}-\frac{\alpha^0 s}{p t}+\alpha^1 s_1+s_1 t_1\\[2mm]
                \mb{r}_2:\qquad &
                h=-\frac{\alpha^0}{p t}\\[2mm]
                \mb{r}_{12}:\qquad &
                (\alpha^0\beta^3-\alpha^1\beta^2 p)^2+4\alpha^0\alpha^3\beta^1\beta^2
                p=0,\quad \mbox{i.e.,}\quad D^1=0.
            \end{align*}
    Conversely, if $D^1=0$ then choosing $f$, $g$ and $h$ as above implies that \eqref{eqgr2} holds so that $\bI^1$ exists. By symmetry, the above arguments also hold for
    the line congruence $\mathscr{L}^2$.
    \end{proof}
    \begin{figure}[h]
        \centering
        \includegraphics[scale=1]{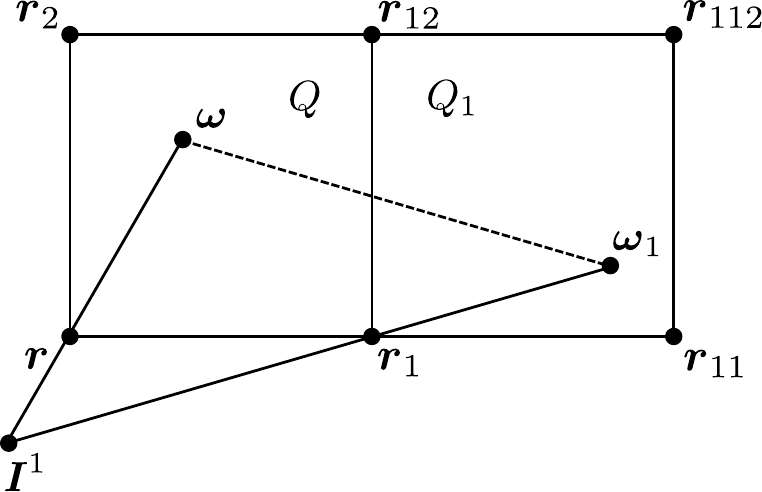}
        \caption{Intersection of lines belonging to the line congruence $\mathscr{L}$}
        \label{figgeogr}
    \end{figure}
As a result of Theorem \ref{thmgeogr1}, it follows that discrete
Godeaux-Rozet surfaces are essentially characterised by the
intersection property of their line congruences.
    \begin{corollary}
        Under the
        assumption of Theorem \ref{thmgeogr1}, a discrete projective minimal
        surface is a discrete Godeaux-Rozet surface if and only if there
        exists a line congruence associated with any envelope which possesses the intersection property in the $n_1$ or $n_2$ direction.
    \end{corollary}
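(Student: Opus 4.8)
The plan is to read the corollary off directly from Definition~\ref{defngr} together with Theorem~\ref{thmgeogr1} and its mutatis mutandis $n_2$-version; no fresh geometry is required, only a careful alignment of the algebraic and the geometric conditions. Recall that, by Definition~\ref{defngr}, a discrete projective minimal surface $\Sigma$ is a discrete Godeaux-Rozet surface precisely when $D^1=0$ or $D^2=0$. Hence the whole task is to show that, under the standing hypothesis of Theorem~\ref{thmgeogr1}, the disjunction ``$D^1=0$ or $D^2=0$'' is equivalent to the existence of one of the four line congruences $\mathscr{L},\mathscr{L}^1,\mathscr{L}^2,\mathscr{L}^{12}$ associated with the envelope having the intersection property in the $n_1$ or the $n_2$ direction.

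First I would dispose of the implication ``$\Leftarrow$''. Suppose one of the four line congruences attached to the given envelope $\Omega$ has the intersection property in, say, the $n_1$ direction. Since the hypothesis of Theorem~\ref{thmgeogr1} guarantees that the edges of $\Omega$ in the $n_1$ direction are not shared generators of the lattice Lie quadrics, Theorem~\ref{thmgeogr1} applies verbatim and forces $D^1=0$; symmetrically, the intersection property in the $n_2$ direction forces $D^2=0$. In either case $\Sigma$ is a discrete Godeaux-Rozet surface by Definition~\ref{defngr}.

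For the implication ``$\Rightarrow$'', assume $\Sigma$ is a discrete Godeaux-Rozet surface, so that $D^1=0$ or $D^2=0$. If $D^1=0$, then, invoking once more the non-shared-generator hypothesis on the $n_1$-edges of $\Omega$, the equivalence in Theorem~\ref{thmgeogr1} produces a line congruence with the intersection property in the $n_1$ direction (by the proof of that theorem this is $\mathscr{L}$ when the generic point $\bX$ is chosen off the shared generator, and then $\mathscr{L}^2$ as well by the ``moreover'' clause). If instead $D^2=0$, the same argument run in the $n_2$ direction, using the mutatis mutandis form of Theorem~\ref{thmgeogr1}, yields a line congruence with the intersection property in the $n_2$ direction. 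This establishes the stated equivalence.

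The one delicate point -- and the place I would argue most carefully -- is that Theorem~\ref{thmgeogr1} as stated only assumes the $n_1$-edges of $\Omega$ to avoid the shared generators, whereas handling the alternative $D^2=0$ needs the corresponding statement for the $n_2$-edges, and the corollary ranges over envelopes rather than fixing one. This is harmless: by Remark~\ref{remgenenv} a discrete projective minimal surface admits a two-parameter family of generic envelopes, and a generic choice of the point $\bX$ on a lattice Lie quadric produces an envelope whose edges in \emph{both} directions fail to be shared generators -- even when $D^1=0$ or $D^2=0$ -- exactly as in the proof of Theorem~\ref{thmgeopm}, where taking $\bX$ with parameter $s$ different from the exceptional value $s^{*}$ (and $t$ correspondingly generic) keeps the edge $[\bX,\bX_1]$ off the shared generator. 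Restricting attention to such a generic envelope, Theorem~\ref{thmgeogr1} and its $n_2$-analogue may both be invoked, and the characterisation holds uniformly, as claimed.
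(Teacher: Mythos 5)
Your proposal is correct and follows essentially the same route as the paper, which presents this corollary as an immediate consequence of Definition~\ref{defngr} ($D^1=0$ or $D^2=0$) combined with Theorem~\ref{thmgeogr1} and its \emph{mutatis mutandis} $n_2$-version, without a separate proof. Your extra care in justifying that a generic envelope can be chosen so that the edge hypothesis of Theorem~\ref{thmgeogr1} holds in the relevant direction is a sound and welcome elaboration, consistent with Remark~\ref{remgenenv} and the proof of Theorem~\ref{thmgeopm}.
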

    \begin{remark}\label{remgr}
        In the proof of Theorem \ref{thmgeogr1}, since $\bX$ does not lie on a shared generator, $D^1=0$ implies by virtue of \eqref{eqtang4} that
            \begin{equation*}
                s_{1}=\displaystyle\frac{\alpha^1\beta^2
                p-\alpha^0\beta^3}{2\alpha^3\beta^2 p}
            \end{equation*}
        and, hence, by Remark \ref{remarsgen}, $\bX_{1}$ is on the shared generator of $Q$ and $Q_1$.
        $\bI^1$ is then given by
            \begin{equation}\label{I1}
                \bI^1=\bX+\frac{f}{h}\mb{r}=p\mb{r}_{12}+s\mb{r}_{1}+t\mb{r}_{2}+ps_{1}t\mb{r}.
            \end{equation}
        On the other hand, if $\bX_1$ does not lie on the generator common
        to $Q$ and $Q_1$ then $D^1=0$ implies that $\bX$ lies on the shared generator. In this case, the line congruences which have
        the intersection property are $\mathscr{L}^1$ and $\mathscr{L}^{12}$. Accordingly, by virtue of the homogeneity assumption, if $D^1=0$ and $\bX$ does not lie on the generator common to $Q$ and $Q_1$ then it must lie on the generator common to $Q_{\bar{1}}$ and $Q$. Thus, for a discrete Godeaux-Rozet surface, the set of generic envelopes (cf.\ Remark \ref{remgenenv}) consists of two one-parameter
        families. It turns out that these two families coincide in the following sense.
\end{remark}

\begin{theorem}\label{2=1}
   Let $\Omega$ with vertices labelled by $\bomega$ be an envelope associated with a set of lattice Lie quadrics $\{Q\}$ of a discrete Godeaux-Rozet surface for which the line congruence $\mathscr{L}$ has the intersection property in the $n_1$ direction
   and for which any vertex $\bomega$ does not lie on a
   generator shared by the lattice Lie quadrics $Q$ and $Q_1$.
   The discrete asymptotic net $\tilde{\Omega}$ defined by $\tilde{\bomega}= \bomega_1$ then constitutes another envelope for which $\tilde{\mathscr{L}}^1$ has the intersection property in $n_1$ direction and $\tilde{\mathscr{L}}^1\ni\tilde{l} = l_1\in\mathscr{L}$
    (cf.\ Figure~\ref{figdemcoin}).
\end{theorem}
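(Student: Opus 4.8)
The plan is to recognise that $\tilde\Omega$ is nothing but $\Omega$ translated one step in the $n_1$ direction ($\tilde\bomega=\bomega_1$), so as a point set it coincides with $\Omega$; the content of the theorem is that this translate is again an envelope of the \emph{same} set $\{Q\}$ of lattice Lie quadrics, i.e.\ that over the quadrilateral of $Q$ the translated vertex $\bomega_1$ still lies on $Q$ and the four edges of its star are tangent to $Q$ at $\bomega_1$. Once this is in place, the statements about $\tilde{\mathscr L}^1$ follow by bookkeeping.

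First I would convert the hypotheses into algebra. Since no vertex $\bomega$ lies on the generator shared by $Q$ and $Q_1$, the $n_1$-edges of $\Omega$ are not shared generators, so Theorem \ref{thmgeogr1} applied to $\mathscr L$ gives $D^1=0$. By Remark \ref{remgr} (equivalently, the case analysis in Remark \ref{remarsgen}), $D^1=0$ together with $\bomega=\bX$ not lying on the generator common to $Q$ and $Q_1$ forces $\bomega_1=\bX_1$ onto that shared generator, call it $G$. By Corollary \ref{corsharegen1}, $D^1=0$ means $G$ is a coinciding (doubled) real generator of $Q$ and $Q_1$, and then Corollary \ref{cortouch} shows that $Q$ and $Q_1$ \emph{touch} along $G$, i.e.\ their tangent planes agree at every point of $G$. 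In particular the tangent plane of $Q$ at $\bomega_1$ equals that of $Q_1$ at $\bomega_1$; and since $\bomega_1\in G\subset Q$, we also record $\bomega_1\in Q$.

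Next I would verify that $\tilde\Omega$ is an envelope of $\{Q\}$. The star of $\tilde\Omega$ over the quadrilateral of $Q$ is precisely the star of $\Omega$ at the vertex $\bomega_1$, now assigned to $Q$ rather than to $Q_1$: its centre is $\tilde\bomega=\bomega_1$ and its edges are $[\bomega_1,\bomega_{11}]$, $[\bomega_1,\bomega]$, $[\bomega_1,\bomega_{12}]$ and $[\bomega_1,\bomega_{1\bar2}]$. The centre lies on $Q$ by the previous paragraph. Each of the four edges is an edge of $\Omega$ incident to $\bomega_1$, hence by the tangency condition defining the envelope $\Omega$ it is tangent to $Q_1$ at $\bomega_1$ and therefore lies in the tangent plane of $Q_1$ at $\bomega_1$. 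By the coincidence of tangent planes along $G$, that plane is the tangent plane of $Q$ at $\bomega_1$, so every such edge is tangent to $Q$ at $\tilde\bomega$. Hence the star of $\tilde\Omega$ over each quadrilateral touches the corresponding lattice Lie quadric; since a star whose edges are all tangent to a quadric at its centre automatically lies in that common tangent plane, $\tilde\Omega$ is a discrete asymptotic net and an envelope of $\{Q\}$.

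Finally I would read off the remaining assertions. The line congruence $\tilde{\mathscr L}^1$ associated with $\tilde\Omega$ is generated by the line through $\tilde\bomega=\bomega_1$ and the vertex $\mb{r}_1$ of the quadrilateral of $Q$ (the underlying net $\Sigma$ being unchanged), which is exactly $l_1\in\mathscr L$; thus $\tilde{\mathscr L}^1$ consists of the $n_1$-translates of the lines of $\mathscr L$, and the intersection property $l\cap l_1\neq\emptyset$ of $\mathscr L$ transfers verbatim to $\tilde{\mathscr L}^1$. The main obstacle is the middle step: the tangency of $\Omega$'s edges \emph{only} to $Q_1$ at $\bomega_1$ is not by itself enough, and the point is that Corollary \ref{cortouch}---the coincidence of the tangent planes of $Q$ and $Q_1$ all along their doubled shared generator in the $D^1=0$ case---is what promotes this to tangency to $Q$ and thereby carries the envelope property across the translation.
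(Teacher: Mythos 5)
Your proposal is correct and follows essentially the same route as the paper: Remark \ref{remgr} places $\bomega_1$ on the generator shared by $Q$ and $Q_1$, Corollary \ref{cortouch} gives the coincidence of tangent planes along that (doubled) generator so that the star of $\Omega$ at $\bomega_1$ also touches $Q$, and the identification $\tilde{l}=[\bomega_1,\mb{r}_1]=l_1$ is then immediate. Your version merely spells out more explicitly the verification that $\tilde{\Omega}$ is an envelope, which the paper leaves implicit.
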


\begin{proof}
By Remark \ref{remgr}, if $\bomega$ and $\bomega_1$ are two vertices
of the envelope $\Omega$ associated with quadrics $Q$ and $Q_1$
respectively, then
 $\bomega_1$ must lie on the shared generator of $Q$ and $Q_1$ since, by assumption, $\bomega$ does not. Hence, $\bomega_1$ may be regarded as a point $\tilde{\bomega}$ of the quadric $Q$.
 Since the quadrics $Q$ and $Q_1$ touch along the common generator, the tangent planes of $Q$ and $Q_1$ at $\tilde{\bomega}=\bomega_1$ coincide.
Hence, the envelope $\Omega$, having the intersection property with respect to $\mathscr{L}$, may also be interpreted as an envelope $\tilde{\Omega}$, having the intersection property with respect to the congruence $\tilde{\mathscr{L}}^1$
  consisting of the lines $\tilde{l} = [\tilde{\bomega},\mb{r}_1] = [\bomega_1,\mb{r}_1] = l_1\in\mathscr{L}$.
\end{proof}
    \begin{figure}[h]
        \centering
        \includegraphics[scale=1]{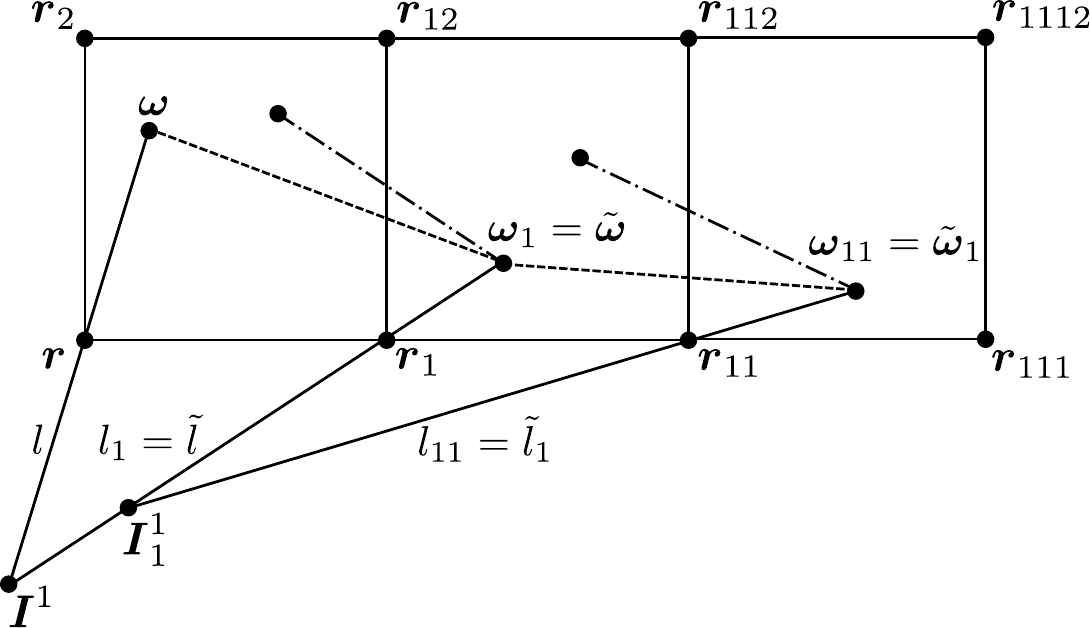}
        \caption{Coinciding envelopes of discrete Godeaux-Rozet surfaces -- shared generators are represented by dashed-dotted
        lines}
        \label{figdemcoin}
    \end{figure}
\begin{remark}\label{repx1q}
    We note that, for a discrete Godeaux-Rozet surface corresponding to $D_1=0$,
    if $\bomega$ and $\bomega_1$ are vertices of a generic envelope such that $\bomega=Q(s,t)$ does not lie on the
    generator common to $Q$ and $Q_1$ then $\bomega_1$
    as a point on $Q$ is represented by $\bomega_1=Q(\tilde{s},\tilde{t})$, where $\tilde{s}$
    is given by \eqref{eqsshare1} and, remarkably, $\tilde{t}=t$. Specifically, evaluation of $\bomega_1=Q_1(s_1,t_1)$ with $s_1$ and $t_1$ given by
\eqref{eqs2} and \eqref{eqt2} yields
\begin{equation*}
  \bomega_1 \sim p\mb{r}_{12}+\tilde{s}\mb{r}_{1}+t\mb{r}_{2}+\tilde{s}t\mb{r}
\end{equation*}
so that, indeed, $\bomega_1$ is the point of
    intersection of the generator common to $Q$ and $Q_1$ and the
    generator of $Q$ of the other type, passing through $\bomega$ and labelled by $t$.
\end{remark}
The above discussion shows that we can also characterise discrete
Demoulin surfaces in terms of line congruences. In particular, since
a discrete Demoulin surface is a Godeaux-Rozet surface with respect
to both the $n_1$ and $n_2$ directions, as a result of Theorem
\ref{thmgeogr1} and the fact that we may always choose $\bX\in Q$ such
that it does not lie on a generator common to $Q$ and $Q_1$ or $Q$
and $Q_2$, the following holds true.
    \begin{corollary}
        Under the assumptions of Theorem \ref{thmgeogr1}, a discrete projective minimal surface $\Sigma$ is a discrete Demoulin surface if and only if there exists a line congruence associated with any envelope which has the intersection property in both directions
        (as indicated in Figure \ref{figgeodem} for the line congruence~$\mathscr{L}$).
    \end{corollary}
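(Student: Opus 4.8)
The plan is to reduce the assertion to two applications of Theorem~\ref{thmgeogr1}, one in each coordinate direction, exploiting that, by Definition~\ref{defngr}, a discrete Demoulin surface is precisely a discrete projective minimal surface with $D^1=D^2=0$, and that such a surface is in particular a discrete Godeaux-Rozet surface with respect to both the $n_1$ and the $n_2$ direction.

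For the ``only if'' part, I would begin from $D^1=D^2=0$ and produce a single envelope witnessing the claim. Since the generator shared by $Q$ and $Q_1$ and the generator shared by $Q$ and $Q_2$ are each just one line on the quadric $Q$, one may pick a point $\bX\in Q$ lying on neither of them; because $\Sigma$ is discrete projective minimal the tangency maps $\mathcal{S}^i,\mathcal{T}^i$ of \eqref{eqs2}--\eqref{eqt22} are defined and mutually compatible, so $\bX$ propagates to an envelope $\Omega$, and since $\bX$ avoids both shared generators the edges of $\Omega$ fail to be shared generators in either direction, so the hypotheses of Theorem~\ref{thmgeogr1} hold for both $n_1$ and $n_2$. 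Applying Theorem~\ref{thmgeogr1} in the $n_1$ direction, $D^1=0$ forces the line congruence $\mathscr{L}$ of lines $[\bX,\mb{r}]$ and their shifts to possess the intersection property in the $n_1$ direction; applying it \emph{mutatis mutandis} in the $n_2$ direction, $D^2=0$ forces the \emph{same} congruence $\mathscr{L}$ --- which is unchanged under $n_1\leftrightarrow n_2$ --- to possess the intersection property in the $n_2$ direction. Thus $\mathscr{L}$ has the intersection property in both directions, as depicted in Figure~\ref{figgeodem}.

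For the ``if'' part I would run Theorem~\ref{thmgeogr1} in reverse: given an associated envelope (with edges that are not shared generators, per the standing assumptions) together with a line congruence having the intersection property in the $n_1$ direction, Theorem~\ref{thmgeogr1} yields $D^1=0$; the intersection property in the $n_2$ direction then yields $D^2=0$ in the same way. Hence $D^1=D^2=0$, i.e.\ $\Sigma$ is a discrete Demoulin surface.

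The step requiring the most care is the construction of the envelope in the ``only if'' direction: one must be certain that the two genericity requirements on $\bX$ (off the generator common to $Q$ and $Q_1$, and off that common to $Q$ and $Q_2$) can be imposed simultaneously while still leaving an admissible envelope, bearing in mind that $D^1=0$ and $D^2=0$ already constrain the position of $\bX$ (cf.\ Remark~\ref{remgr}); this is exactly the ``we may always choose $\bX\in Q$ \dots'' observation preceding the corollary. A minor but essential point is that it is genuinely one and the same congruence $\mathscr{L}$ that acquires the intersection property in both directions, which is immediate since in the proof of Theorem~\ref{thmgeogr1} the relevant congruence in the $n_1$-analysis and in the $n_2$-analysis is in each case $\{[\bX,\mb{r}]\}$.
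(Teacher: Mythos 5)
Your proposal is correct and follows essentially the same route as the paper, which justifies the corollary in the paragraph preceding it by applying Theorem \ref{thmgeogr1} in each direction and invoking precisely the observation that $\bX\in Q$ may be chosen off both the generator shared with $Q_1$ and the one shared with $Q_2$, so that one and the same congruence $\mathscr{L}$ acquires the intersection property in both directions. Your write-up merely makes explicit the construction of the witnessing envelope and the reverse implication, both of which the paper leaves implicit.
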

    \begin{figure}[h]
        \centering
        \includegraphics[scale=1]{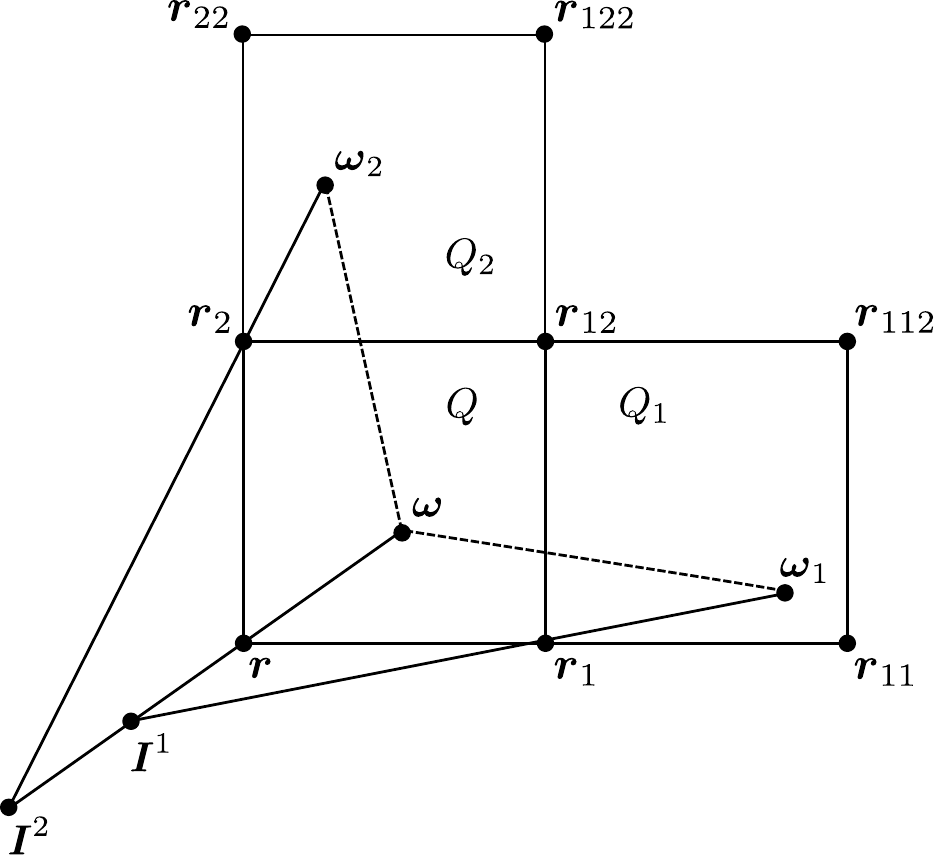}
        \caption{Intersection of three members of the line congruence $\mathscr{L}$ which has the intersection property in both directions.}
        \label{figgeodem}
    \end{figure}

    \begin{remark}
        Remark \ref{remgr} implies that for a discrete Demoulin
        surface and a given generic envelope $\Omega$, each
        vertex of $\Omega$ lies on a shared generator with one of the two
        neighbouring quadrics in the $n_1$ direction and on a shared generator with one of the two
        neighbouring quadrics in the $n_2$ direction. Thus, a
        discrete Demoulin surface has four generic envelopes, the four vertices of which associated with a quadric $Q$ constitute the points of intersection $\bA,\bB,\bC,\bD$ of the four generators of $Q$ shared with the neighbouring four quadrics as depicted in Figure \ref{figdem4}.
        By Remark \ref{repx1q}, the point $\bA_1$ is the intersection of the generator common to $Q$ and $Q_1$ and the generator common to $Q$ and $Q_{\bar{2}}$.
        Hence, it follows that
        $\bA_1=\bD$. Similarly, $\bB_1=\bC$ and for reasons of symmetry,
        $\bA_2=\bB$ and $\bD_2=\bC$. We conclude that the four envelopes of a discrete Demoulin surface coincide in the sense of Theorem~\ref{2=1}
        with $\bA_{12}=\bC$.
    \end{remark}
    \begin{figure}
        \centering
        \includegraphics[scale=0.915]{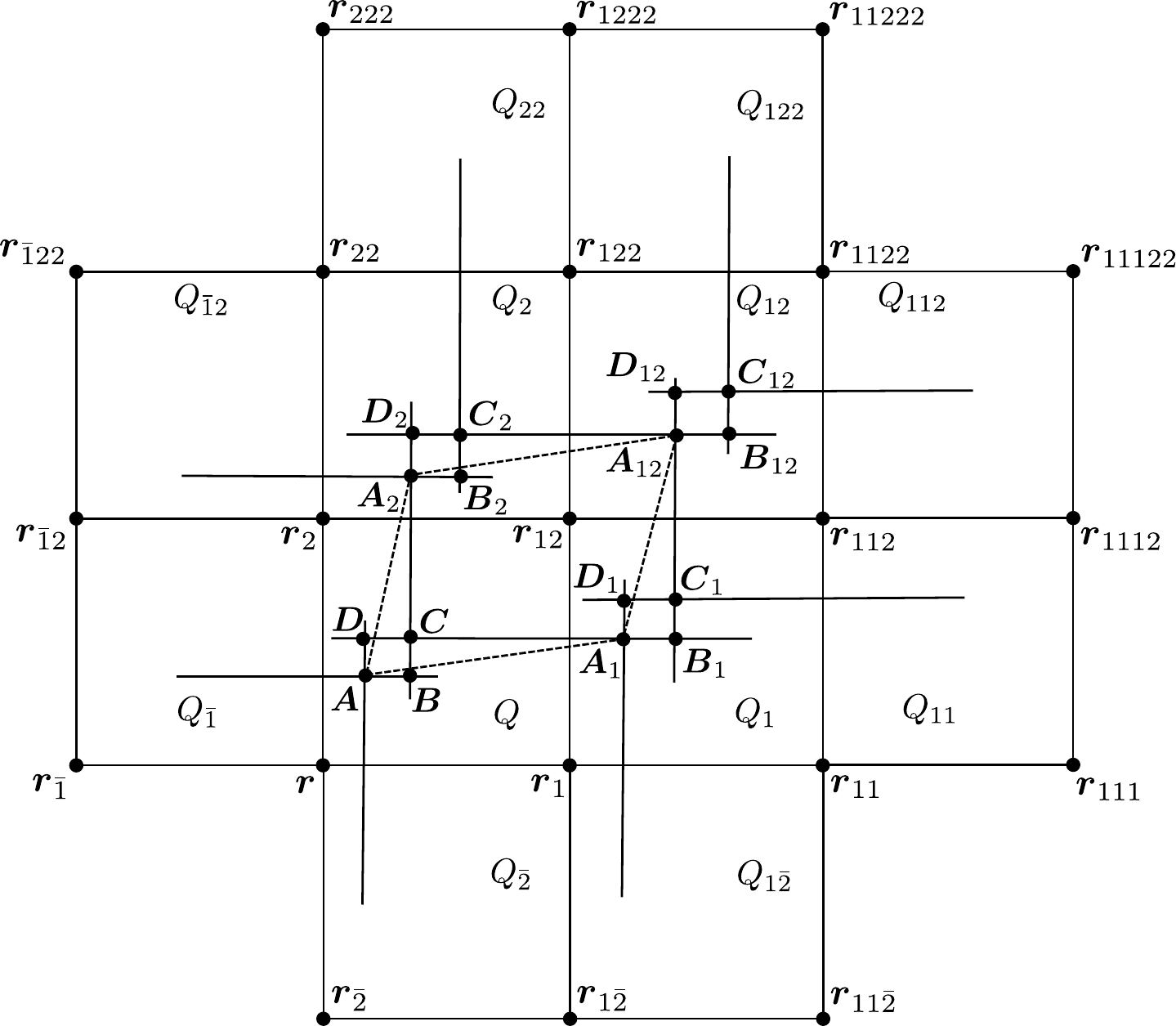}
        \caption{The vertices of the four envelopes of a discrete Demoulin
        surface regarded as the points of intersection of shared generators.}
        \label{figdem4}
    \end{figure}

In summary, the following statement may be made.

\begin{corollary} 
Discrete projective minimal surfaces and their subclasses may be characterised in terms of the number of envelopes of the associated set of lattice Lie quadrics.
\begin{enumerate}[label=(\roman*)]
  \item
  In general, a discrete projective minimal surface admits a two-parameter family of generic envelopes.
  \item
  In general, a discrete Godeaux-Rozet surface admits two one-parameter families of generic envelopes. The two families coincide modulo a relabelling of vertices.
   \item
   In general, a discrete Demoulin surface admits four generic envelopes which coincide modulo a relabelling of their vertices.
            \end{enumerate}
\end{corollary}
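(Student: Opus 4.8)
The plan is to assemble the three assertions from results already established, keeping careful track of the parameter count and of where the homogeneity condition is invoked; there is essentially no new computation, so the work is one of exposition. I would organise the argument in three parts. For \emph{(i)}, I start from Theorem~\ref{thmgeopm} and Remark~\ref{remgenenv}: in the generic situation, meaning $D^1\neq 0$ and $D^2\neq 0$ (so that $\Sigma$ is not of Godeaux-Rozet type and the relevant denominators do not vanish), a generic choice of a point $\bomega$ on a fixed lattice Lie quadric $Q$ makes all four maps $\mathcal{S}^1,\mathcal{T}^1,\mathcal{S}^2,\mathcal{T}^2$ of \eqref{eqs2},\eqref{eqt2},\eqref{eqs22},\eqref{eqt22} well defined, and by the algebraic minimality condition $\Delta_1T^2=0$ they are compatible, so the iterated tangency construction closes and produces an envelope $\Omega$. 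Since a point on a quadric ranges over a two-parameter set and, by genericity, distinct base points give genuinely distinct generic envelopes (none of whose edges are shared generators), this is exactly a two-parameter family of generic envelopes.

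For \emph{(ii)}, assume $\Sigma$ is discrete Godeaux-Rozet, say $D^1=0$ (the case $D^2=0$ follows by exchanging $n_1$ and $n_2$ in Theorem~\ref{thmgeogr1}). By Theorem~\ref{thmgeogr1} and Remark~\ref{remgr}, whenever $\bomega=Q(s,t)$ does not lie on the generator $Q$ shares with $Q_1$, the map $\mathcal{S}^1$ collapses to \eqref{maps1}, which is independent of $s$, and $\bomega_1$ lands on that shared generator; the homogeneity assumption then forces $\bomega$ to lie on the generator shared by $Q_{\bar{1}}$ and $Q$. Thus the admissible base points no longer fill the quadric but are confined to a single generator of $Q$, a one-parameter set — this is the source of the drop from two to one parameter. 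The two ``branches'' (according to whether $\bomega$ lies on the $Q_{\bar{1}}/Q$ generator, so that $\mathscr{L},\mathscr{L}^2$ have the intersection property, or on the $Q/Q_1$ generator, so that $\mathscr{L}^1,\mathscr{L}^{12}$ do) give two one-parameter families, and Theorem~\ref{2=1}, applied to an envelope of the first kind, produces $\tilde{\Omega}$ with $\tilde{\bomega}=\bomega_1$ of the second kind and $\tilde{\mathscr{L}}^1\ni l_1\in\mathscr{L}$; this is precisely the statement that the two families coincide modulo a relabelling of vertices.

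For \emph{(iii)}, a discrete Demoulin surface has $D^1=D^2=0$, hence is Godeaux-Rozet in both directions, and I would apply the analysis of part (ii) simultaneously in $n_1$ and $n_2$: any vertex $\bomega$ of a generic envelope, viewed as a point of $Q$, must lie both on a generator of $Q$ shared with one of $Q_1,Q_{\bar{1}}$ and on a generator shared with one of $Q_2,Q_{\bar{2}}$, so $\bomega$ is one of the four intersection points $\bA,\bB,\bC,\bD$ of the four shared generators of $Q$ (Figure~\ref{figdem4}). This determines $\bomega$ completely, giving exactly four generic envelopes. To see that they coincide modulo relabelling, I invoke Remark~\ref{repx1q}: $\bA_1$ is the intersection of the generator common to $Q$ and $Q_1$ with the one common to $Q$ and $Q_{\bar{2}}$, i.e.\ $\bA_1=\bD$; likewise $\bB_1=\bC$, and by symmetry $\bA_2=\bB$, $\bD_2=\bC$, whence $\bA_{12}=\bC$, so the shift $\bomega\mapsto\bomega_{12}$ identifies the four envelopes in the sense of Theorem~\ref{2=1}.

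The only genuinely delicate point — everything else is bookkeeping over earlier theorems — is the parameter count in (ii) and (iii): one must justify that the homogeneity condition truly confines the admissible base points to a generator (rather than leaving a residual two-parameter freedom), and, in (iii), that imposing the constraint in both directions leaves precisely the four vertices $\bA,\bB,\bC,\bD$ and nothing more. This forces one to be explicit about what ``in general'' means (non-vanishing of $\alpha^3,\beta^2,\gamma^3,\delta^1$ and of the denominators appearing in $\mathcal{S}^i,\mathcal{T}^i$) and about applying the homogeneity assumption consistently along whole strips, which is exactly where Remarks~\ref{remgr} and~\ref{repx1q} carry the load.
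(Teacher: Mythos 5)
Your proposal is correct and follows essentially the same route as the paper, which states this corollary as a summary of the immediately preceding results (Theorem~\ref{thmgeopm} with Remark~\ref{remgenenv} for (i); Theorem~\ref{thmgeogr1}, Remark~\ref{remgr} and Theorem~\ref{2=1} for (ii); and Remark~\ref{repx1q} together with the identification of the four intersection points $\bA,\bB,\bC,\bD$ for (iii)). The ``delicate point'' you flag --- that the homogeneity condition confines admissible base points to a single shared generator, and in the Demoulin case to the four points $\bA,\bB,\bC,\bD$ --- is precisely the load carried by Remark~\ref{remgr}, so your assembly matches the paper's argument.
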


\subsection{Discrete Tzitz\'eica surfaces}

We have shown that discrete Demoulin surfaces admit four envelopes
which coincide modulo a relabelling of the vertices and that the
four associated line congruences which possess the intersection
property in both directions consist of the same lines. In the
following, we therefore focus without loss of generality on the
envelope of a discrete Demoulin surface for which $\mathscr{L}$
possesses the intersection property as depicted in Figure
\ref{figgeodem}. As in the preceding, we denote by $\bI^1$ the point
of intersection of a line $l\in\mathscr{L}$ and the neighbouring
line $l_1$ and, similarly, the point of intersection of the line $l$
and the neighbouring line $l_2$ is labelled by $\bI^2$. It is natural
to examine the case when the points $\bI^1$ and $\bI^2$ coincide. Since
$\bI^1$ and $\bI^2$ lie on $l$, this corresponds to one additional
condition on the discrete Gauss-Mainardi-Codazzi equations.
Comparison of the parametrisation \eqref{I1} of $\bI^1$ and its
counterpart for $\bI^2$ shows that this may be expressed as
$s_1t=t_2s$. Likewise, there exists only one condition for the
points $\bI^1$ and $\bI^1_1$ to coincide since both $\bI^1$ and $\bI^1_1$
lie on the line $l_1$. A brief calculation reveals that this
condition is equivalent to the condition of coinciding points $\bI^1$
and $\bI^2$. Hence, for reasons of symmetry, we have established the
following theorem.

    \begin{theorem}\label{thmtz}
        Let $\Sigma$ be a discrete Demoulin surface and $\bI^1$ and $\bI^2$  be the points of intersection of neighbouring lines of the line congruence $\mathscr{L}$. If the points $\bI^1$ and $\bI^2$ coincide for each quadrilateral then $\bI^1 = \bI^2$
        does not depend on the quadrilateral so that all lines of the line congruence $\mathscr{L}$ meet in a point.
    \end{theorem}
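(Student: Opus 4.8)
The plan is to exploit the algebraic reformulation already obtained in the discussion preceding the theorem, which reduces the geometric claim to an essentially formal combinatorial argument on the lattice. Recall that there it was established that, for a fixed quadrilateral, the coincidence $\bI^1=\bI^2$ is equivalent to the single scalar relation $s_1t=t_2s$ on the discrete Gauss--Mainardi--Codazzi data; that the coincidence $\bI^1=\bI^1_1$ (both points lying on $l_1$) is governed by the very same relation; and, by the symmetry between the $n_1$ and $n_2$ directions, that $\bI^2=\bI^2_2$ is governed by it as well. Hence the hypothesis ``$\bI^1=\bI^2$ on every quadrilateral'' upgrades at once to ``$\bI^1=\bI^1_1$ and $\bI^2=\bI^2_2$ on every quadrilateral''.

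First I would read off the implied dependence on the lattice coordinates. Since $\bI^1(n_1,n_2)=\bI^1_1(n_1,n_2)=\bI^1(n_1+1,n_2)$ for all $(n_1,n_2)$, the point $\bI^1$ is independent of $n_1$, say $\bI^1(n_1,n_2)=J(n_2)$; dually, $\bI^2=\bI^2_2$ everywhere forces $\bI^2(n_1,n_2)=K(n_1)$ to be independent of $n_2$. The standing hypothesis $\bI^1(n_1,n_2)=\bI^2(n_1,n_2)$ then reads $J(n_2)=K(n_1)$ for all $(n_1,n_2)$, which is possible only if $J$ and $K$ are one and the same constant point $P$. Thus $\bI^1=\bI^2=P$ is independent of the quadrilateral. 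Since, by construction, $\bI^1(n_1,n_2)=l(n_1,n_2)\cap l_1(n_1,n_2)$ lies on the line $l=l(n_1,n_2)$ of $\mathscr{L}$, and $\mathscr{L}=\{l(n_1,n_2)\}$ exhausts the congruence, every line of $\mathscr{L}$ passes through $P$; that is, all lines of $\mathscr{L}$ meet in the single point $P$, as claimed.

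The one step I expect to require genuine work --- and which I would spell out rather than quote --- is the ``brief calculation'' underlying the equivalence of the constraints $\bI^1=\bI^2$ and $\bI^1=\bI^1_1$. Using the explicit parametrisation \eqref{I1} of $\bI^1$ together with its counterpart for $\bI^2$, the maps $\mathcal{S}^i,\mathcal{T}^i$ of \eqref{eqs2}, \eqref{eqt2}, \eqref{eqs22}, \eqref{eqt22}, and the discrete Gauss--Mainardi--Codazzi equations \eqref{eqcomp1}, one must verify that both coincidences collapse to the same relation $s_1t=t_2s$; once this is in hand, the combinatorial argument above is purely formal. I would also keep in mind the harmless non-degeneracy assumptions in force throughout --- neighbouring lines of $\mathscr{L}$ are genuinely distinct, so that they meet in a unique point, and the envelope is generic in the sense used in Theorem \ref{2=1}, so that the intersection points $\bI^1$ and $\bI^2$ are well defined everywhere.
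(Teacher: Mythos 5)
Your proposal is correct and follows essentially the same route as the paper: the paper likewise reduces everything to the single scalar relation $s_1t=t_2s$, notes that the coincidences $\bI^1=\bI^2$ and $\bI^1=\bI^1_1$ are governed by the same condition, and concludes "for reasons of symmetry". You merely make explicit the lattice bookkeeping ($\bI^1$ independent of $n_1$, $\bI^2$ independent of $n_2$, hence both constant) and honestly flag the "brief calculation" that the paper also leaves to the reader.
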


    In light of the above theorem, we define a special subclass of discrete Demoulin surfaces which may be regarded as discrete analogues of (projective transforms of) Tzitz\'eica surfaces (see, e.g., \cite{Schief2002} and references therein).
    \begin{definition}\label{deftz}
        A discrete projective minimal surface $\Sigma$ is said to be a projective transform of a discrete Tzitz\'eica surface if there exists an envelope and an associated line congruence such that all lines of the line congruence meet in a point.
    \end{definition}

Remarkably, the particular discrete projective
minimal surfaces $\Sigma$ defined above constitute projective
transforms of the integrable discrete Tzitz\'eica surfaces proposed
in \cite{bobenkoschief1999} in an entirely different context.
Indeed, we first note that the condition of coinciding points of
intersection $\bI^1$ and $\bI^2$, namely $s_1t=t_2s$, guarantees the
existence of a potential $\varphi$ related to $s$ and $t$ by
\begin{equation}\label{defphi}
   \varphi_1 = - t\varphi,\quad\varphi_2 = -s\varphi.
\end{equation}
This potential $\varphi$ turns out to be a scalar solution of the
discrete asymptotic net conditions (cf.\ \eqref{eqframe1})
\begin{equation}\label{asympdef}
   \mb{r}_{11} = \alpha^0\mb{r} + \alpha^1\mb{r}_1 +
\alpha^3\mb{r}_{12},\quad
   \mb{r}_{22} = \gamma^0\mb{r} + \gamma^2\mb{r}_2 + \gamma^3\mb{r}_{12}.
\end{equation}
For instance, substitution of $\varphi$ into \eqref{asympdef}$_1$
and evaluation by means of \eqref{defphi} yield
\begin{equation}
   t_1t = \alpha^0 - \alpha^1t + \alpha^3 s_1t,
\end{equation}
which is indeed satisfied by virtue of the expressions for $t_1$ and
$s_1$ obtained from \eqref{eqt2} and \eqref{eqtang4} respectively.
Accordingly, in the affine gauge
\begin{equation}
   \hat{\mb{r}}=\frac{\mb{r}}{\varphi},
\end{equation}
the discrete asymptotic net conditions adopt the form
\begin{equation}\label{asympaffine}
\begin{split}
   \hat{\mb{r}}_{11} - \hat{\mb{r}}_1 &=
\hat{\alpha}^1(\hat{\mb{r}}_1-\hat{\mb{r}}) +
\hat{\alpha}^3(\hat{\mb{r}}_{12}-\hat{\mb{r}}_1)\\
   \hat{\mb{r}}_{22} - \hat{\mb{r}}_2 &=
\hat{\gamma}^2(\hat{\mb{r}}_2-\hat{\mb{r}}) +
\hat{\gamma}^3(\hat{\mb{r}}_{12}-\hat{\mb{r}}_2).
\end{split}
\end{equation}
On the other hand, the constancy of the point of intersection
$I^1=I^2$ may be formulated as
\begin{equation}
   I^1\sim \mb{e}_4,\qquad \mb{e}_4 = (0\,\,0\,\,0\,\,1)^T
\end{equation}
modulo an appropriate projective transformation so that the
parametrisation \eqref{I1} gives rise to
\begin{equation}
   p\frac{\varphi_{12}\varphi}{\varphi_1\varphi_2}(\hat{\mb{r}}_{12} +
\hat{\mb{r}}) - (\hat{\mb{r}}_1 + \hat{\mb{r}}_2) \sim\mb{e}_4.
\end{equation}
If we now interpret the first three components $\mb{r}^{a}$ of
$\hat{\mb{r}}$ as the position vector of a representation $\Sigma^a$
in centro-affine geometry of the discrete surface $\Sigma$ then, by
virtue of \eqref{asympaffine}, $\Sigma^a$ constitutes a discrete
asymptotic net which is constrained by
\begin{equation}\label{affinenormal}
   \mb{r}^a_{12} + \mb{r}^a = h(\mb{r}^a_1 + \mb{r}^a_2),\qquad h =
\frac{\varphi_1\varphi_2}{p\varphi_{12}\varphi}.
\end{equation}
The latter may be interpreted geometrically if we define a discrete
affine normal $\mb{N}^a$ associated with a quadrilateral
$[\mb{r}^a,\mb{r}^a_1,\mb{r}^a_2,\mb{r}^a_{12}]$ to be the line
connecting the midpoints of the diagonals of the quadrilateral.
Thus, the constraint \eqref{affinenormal} shows that all affine
normals $\mb{N}^a$ meet at a point (namely the origin of the
coordinate system). This is precisely the property on which the
definition of the discrete affine spheres proposed in
\cite{bobenkoschief1999} is based.

\bigskip
\noindent
{\bf Funding.} This work was supported by the Australian Research Council (ARC Discovery Project DP140100851).’

\bibliographystyle{unsrt}
\bibliography{ref}
\end{document}